\documentclass[11pt]{article}

\usepackage{graphicx}

\usepackage{bbm}

\usepackage{amsmath}
\usepackage{amsthm}
\usepackage{amsfonts}
\usepackage{amssymb}
\usepackage{mathtools}
\usepackage{xcolor}
\RequirePackage[colorlinks,citecolor=blue,urlcolor=blue]{hyperref}

\evensidemargin0cm \oddsidemargin0cm \textwidth16cm
\textheight23cm \topmargin-2cm

\newcommand{\eee}{{\rm e}}

\newcommand{\Erw}{\mathbb{E}}
\newcommand{\N}{\mathbb{N}}
\newcommand{\R}{\mathbb{R}}
\newcommand{\C}{\mathbb{C}}
\DeclareMathOperator{\1}{\mathbbm{1}}

\newcommand{\wh}{\widehat}

\newcommand{\ovl}{\overline}

\newcommand{\cN}{\mathcal{N}}
\newcommand{\Prob}{\mathbb{P}}

\newcommand{\eps}{\varepsilon}

\newtheorem{thm}{Theorem}[section]
\newtheorem{lemma}[thm]{Lemma}

\newtheorem{cor}[thm]{Corollary}

\newtheorem{assertion}[thm]{Proposition}
\theoremstyle{definition}

\theoremstyle{remark}
\newtheorem{rem}[thm]{Remark}

\begin{document}
\title{On decoupled standard random walks}\date{}
\author{Gerold Alsmeyer\footnote{Department of Mathematics and Computer Science, University of M\"{u}nster, Germany; e-mail address: gerolda@uni-muenster.de} \ \ Alexander Iksanov\footnote{Faculty of Computer Science and Cybernetics, Taras Shevchenko National University of Kyiv, Ukraine; e-mail address:
iksan@univ.kiev.ua} \ \ and \ \ Zakhar Kabluchko\footnote{Department of Mathematics and Computer Science, University of M\"{u}nster, Germany; e-mail address: zakhar.kabluchko@uni-muenster.de }}
\maketitle

\begin{abstract}
Let $S_{n}=\sum_{k=1}^{n}\xi_{k}$, $n\in\N$, be a standard random walk with i.i.d.~nonnegative increments $\xi_{1},\xi_{2},\ldots$ and associated renewal counting process $N(t)=\sum_{n\ge 1}\1_{\{S_{n}\le t\}}$, $t\ge 0$. A  decoupling of $(S_{n})_{n\ge 1}$ is any sequence $\wh{S}_{1}$, $\wh{S}_{2},\ldots$ of independent random variables such that, for each $n\in\N$, $\wh{S}_{n}$ and $S_{n}$ have the same law. Under the assumption that the law of $\wh{S}_{1}$ belongs to the domain of attraction of a stable law
with finite mean, we prove a functional limit theorem for the \emph{decoupled renewal counting process} $\wh{N}(t)=\sum_{n\ge 1}\1_{\{\wh{S}_{n}\le t\}}$, $t\ge 0$, after proper scaling, centering and normalization. We also study the asymptotics of $\log \Prob\{\min_{n\ge 1}\wh{S}_{n}>t\}$ as $t\to\infty$ under varying assumptions~on~the law of $\wh{S}_{1}$. In particular, we recover the assertions which were previously known in the case when $\wh{S}_{1}$ has an exponential law. These results, which were formulated in terms of an infinite Ginibre point process, served as an initial motivation for the present work. Finally, we prove strong law of large numbers type results for the sequence of decoupled maxima $M_{n}=\max_{1\le k\le n}\wh{S}_{k}$, $n\in\N$, and the related first passage time process $\wh\tau(t)=\inf\{n\in\N: M_{n}>t\}$, $t\ge 0$. In particular, we provide a tail condition on the law of $\wh{S}_{1}$ in the case when the latter has finite mean but infinite variance that implies $\lim_{t\to\infty}t^{-1}\wh\tau(t)=\lim_{t\to\infty}t^{-1}\Erw\wh\tau(t)=0$. In other words, $t^{-1}\wh\tau(t)$ may exhibit a different limit behavior than $t^{-1}\tau(t)$, where $\tau(t)$ denotes the level-$t$ first passage time of $(S_{n})_{n\ge 1}$.
\end{abstract}

\noindent Key words: decoupled renewal process; functional limit theorem; large deviation; renewal theory; stationary Gaussian process; strong law of large numbers; tail behavior  

\noindent 2020 Mathematics Subject Classification: Primary: 60F15, 60F17
\hphantom{2020 Mathematics Subject Classification: } Secondary: 60F10, 60G15

\section{Introduction and main results}

For a given sequence $(\xi_{n})_{n\ge 1}$ of i.i.d.~nonnegative random variables, consider the associated 
standard random walk $S_{n}=\sum_{k=1}^{n}\xi_{k}$ for $n\ge 1$. Further, let $(N(t))_{t\ge 0}$ and $(\tau(t))_{t\ge 0}$ denote the associated \emph{renewal counting process} and \emph{first-passage time process}, respectively, which are defined by
$$ N(t)\,:=\,\sum_{n\ge 1}\1_{\{S_{n}\le t\}}\quad\text{and}\quad\tau(t)\,:=\,N(t)+1=\inf\{n\ge 1:S_{n}>t\} $$
for $t\ge 0$. In this article, we are interested in decoupled versions of these processes, which are obtained by replacing $(S_{n})_{n\ge 1}$ with a \emph{decoupling} $(\wh{S}_{n})_{n\ge 1}$, i.e., with a sequence of independent $\wh{S}_{1},\wh{S}_{2},\ldots$ such that $\wh{S}_{n}$ is a copy of $S_{n}$ for each $n\in\N$. The counterparts of $N(t)$ and $\tau(t)$ for this decoupling are denoted $\wh{N}(t)$ and $\wh{\tau}(t)$ and we note that, with $M_{n}:=\max_{1\le k\le n}\wh{S}_{k}$ for $n\in\N$,
$$ \wh{\tau}(t)\,=\,\inf\{n\ge 1:M_{n}>t\},\quad t\ge 0. $$
As $\lim_{n\to\infty}M_{n}=+\infty$ a.s., we have $\wh{\tau}(t)<\infty$ a.s.~for all $t$.

\vspace{.1cm}
Our interest in the objects just introduced was raised by their recent appearance in connection with particular determinantal point processes. To be more precise, let $\C$ as usual denote the set of complex numbers, $\bar z$ the complex conjugate of $z\in\C$, ${\rm Leb}$ 
Lebesgue~measure on $\C$, and finally $\rho$ the measure defined by $\rho({\rm d}z):=\pi^{-1}\eee^{-|z|^{2}}{\rm Leb}({\rm d}z)$ for $z\in\C$. Then $\Theta$ is called an \emph{infinite Ginibre point process} on $\C$ if it is a determinantal point process with kernel $C(z,w)=\eee^{z\bar w}$ for $z,w\in \C$ with respect to $\rho$, which in turn means that $\Theta$ is a simple point process such that, for any $k\in\N$ and any pairwise disjoint Borel subsets $B_{1},\ldots, B_{k}$ of $\C$,
$$ \Erw\prod_{j=1}^{k}\Theta(B_{j})\ =\ \int_{B_{1}\times\ldots\times B_{k}} {\rm det}(C(z_{i}, z_{j}))_{1\le i,j\le k}\ \rho({\rm d}z_{1})\ldots\rho({\rm d}z_{k}). $$
See \cite{Hough+Krishnapur+Peres+Virag:2009} for detailed information on determinantal point processes, in particular Sections 4.3.7 and 4.7 for a discussion of the Ginibre point process.

\vspace{.1cm}
For $t\ge 0$, let $\Theta(D_t)$ denote the number of points of $\Theta$ in the disk $D_t:=\{z\in \C: |z|<t^{1/2}\}$. According to an infinite version of Kostlan's result \cite{Kostlan:1992}, stated as Theorem 1.1 in \cite{Fenzl+Lambert:2021}, the process
\begin{equation}\label{eq:distr}
(\Theta(D_t))_{t\ge 0}~~\text{has the same law as}~~(\wh{N}(t))_{t\ge 0}\ =\ \Big(\textstyle\sum_{n\ge 1}\1_{\{\wh{S}_{n}\le t\}}\Big)_{\!t\ge 0},
\end{equation}
where $\xi$ is a standard exponential random variable and thus $(\wh{N}(t))_{t\ge 0}$ a \emph{decoupled standard Poisson process}. Prop.~1.4 in \cite{Fenzl+Lambert:2021} is a functional limit theorem for $(\Theta(D_t))_{t\ge 0}$, properly scaled,~centered and normalized. Prop.~7.2.1 on p.~124 in \cite{Hough+Krishnapur+Peres+Virag:2009} provides the first-order asymptotics of the logarithmic hole probability for an infinite Ginibre point process. When formulated in terms of a decoupled Poisson process and thus assuming the law of $\xi$ to be standard exponential, this is equivalent to the first-order asymptotics of $\log\Prob\{\min_{n\ge 1}\wh{S}_{n}>t\}$ as $t\to\infty$. The main purpose of the present paper is to prove corresponding results for $(\wh{N}(t))_{t\ge 0}$ and $\log\Prob\{\min_{n\ge 1}\wh{S}_{n}>t\}$ as $t\to\infty$ \emph{without specifying the law of $\xi$}. Additionally, we provide  strong law of large numbers type results for $(M_{n})_{n\ge 1}$ and $(\wh{\tau}(t))_{t\ge 0}$ and also find the first-order asymptotics of $\Erw\wh{\tau}(t)$ as $t\to\infty$. Functional limit theorems for $(M_{n})_{n\ge 1}$ and $(\wh{\tau}(t))_{t\ge 0}$, however, will be discussed in a separate article.

\section{Weak convergence of the decoupled renewal counting process}\label{sect:weak convergence}

We will state our functional limit theorem for $(\wh{N}(t))_{t\ge 0}$ in Subsection \ref{main result} below after a brief review of corresponding results for suitable normalizations of $(N(t))_{t\ge 0}$ and $(\tau(t))_{t\ge 0}$ which are known in the literature. Our result will assume that the law of $\xi$ belongs to the domain of attraction of a stable law with index $\alpha\in (1,2]$. This particularly entails $\mu:=\Erw\xi<\infty$. As for $\alpha=2$, let us recall that the law of $\xi$ belongs to the domain of attraction of a normal distribution if, and only if, either $\sigma^{2} :={\rm Var}\, \xi\in (0, \infty)$, or ${\rm Var}\, \xi=\infty$ and the truncated mean of $\xi^{2}$ is slowly varying at infinity, thus
\begin{equation}\label{eq:slowly varying 2nd moment}
\Erw\xi^{2} \1_{\{\xi \le t\}}\,\sim\,\ell(t)\quad\text{as }t\to\infty
\end{equation}
for some slowly varying function $\ell$. And if $\alpha \in (1,2)$, then the distribution of $\xi$ belongs to the domain of attraction of an $\alpha$-stable law if, and only if,
\begin{equation}\label{eq:domain alpha}
\Prob\{\xi>t\}\,\sim\,t^{-\alpha}\ell(t)\quad\text{as }t\to\infty
\end{equation}
for some $\ell$ as before.

\subsection{A quick review of the ordinary renewal case}\label{subsect:reminder}

Let $D$ denote the Skorokhod space of c\`{a}dl\`{a}g functions defined on $[0,\infty)$. According to Theorem 5.3.1 and Theorem 5.3.2 in
\cite{Gut:2009} or Section 7.3.1 in \cite{Whitt:2002}
\begin{equation}\label{eq:FLT for N(t)}
\bigg(\frac{\tau(ut)-\mu^{-1}ut}{\mu^{-1-1/\alpha}c_{\alpha}(t)}\bigg)_{u\ge 0} ~\Longrightarrow~
S_{\alpha}:=(\mathcal{S}_{\alpha}(u))_{u\ge 0}\quad\text{as }t\to\infty,
\end{equation}
where,
\begin{itemize}
\item [(A1)]
if $\sigma^{2}<\infty$, then $\alpha=2$, $c_{2}(t)=\sigma\sqrt{t}$, $\mathcal{S}_{2}$ is standard Brownian motion, and the
convergence takes place in the $J_{1}$-topology on $D$;
\item [(A2)]
if $\sigma^{2}=\infty$ and \eqref{eq:slowly varying 2nd moment}
holds, then $\alpha=2$, $c_{2}$ is some positive continuous function such that
\begin{equation*}
\lim_{t\to\infty} t \ell(c_{2}(t))(c_{2}(t))^{-2}\,=\,1,
\end{equation*}
and the convergence takes place in the $J_{1}$- topology on $D$;
\item [(A3)] if \eqref{eq:domain alpha} holds for $\alpha\in (1,2)$, then $\mathcal{S}_{\alpha}$ is a spectrally negative $\alpha$-stable L\'{e}vy process such that
$\mathcal{S}_{\alpha}(1)$ has the characteristic function
\begin{equation}\label{eq:stable}
\mathbb{E} [\exp(iz\mathcal{S}_{\alpha}(1))]\ =\ \exp\{-|z|^\alpha
\Gamma(1-\alpha)(\cos(\pi\alpha/2)+{\rm i}\sin(\pi\alpha/2){\rm sign}\,(z))\},\quad z\in\R.
\end{equation}
Here $\Gamma$ denotes Euler's gamma function, $c_{\alpha}$ is
some positive continuous function satisfying
\begin{equation*}
\lim_{t \to \infty} t\ell(c_{\alpha}(t))(c_{\alpha}(t))^{-\alpha}\,=\,1,
\end{equation*}
and the convergence takes place in the $M_{1}$-topology on $D$.
\end{itemize}
Observe that \eqref{eq:FLT for N(t)} also holds with $N(ut)$ replacing $\tau(ut)$ and $\Erw\tau(ut)$ replacing $\mu^{-1}ut$. We refer to \cite{Whitt:2002} for extensive information concerning both the $J_{1}$- and $M_{1}$-convergence on $D$.

\vspace{.1cm}
The function $c_{\alpha}$ is regularly varying at $\infty$ with index $1/\alpha$. Hence, the function $t\mapsto t/c_{\alpha}(t)$ is regularly varying at $\infty$ with index $1-1/\alpha$. By Theorem 1.8.2 in \cite{Bingham+Goldie+Teugels:1989}, there exists an eventually strictly increasing and differentiable function $d_{\alpha}$ satisfying $\lim_{t\to\infty}(td_{\alpha}(t)/d_{\alpha}'(t))=1-1/\alpha$ and $t/c_{\alpha}(t)\sim d_{\alpha}(t)$ as $t\to\infty$. Thus, we can and do assume without loss of generality that the function $t\mapsto t/c_{\alpha}(t)$ itself possesses all these properties. With this at hand, we can put $h_{\alpha}(t):=(t/c_{\alpha}(t))^{-1}$ (inverse function) for large $t$ and point out that $h_{\alpha}$ is ultimately strictly increasing, regularly varying with index $(1-1/\alpha)^{-1}$, and
\begin{equation}\label{eq:diff}
\lim_{t\to\infty}\frac{th'_{\alpha}(t)}{h_{\alpha}(t)}\ =\ \lim_{t\to\infty}\frac{h_{\alpha}'(t)}{c_{\alpha}(h_{\alpha}(t))}\ =\ \frac{\alpha}{\alpha-1}.
\end{equation}

\subsection{Functional limit theorem for the decoupled renewal counting process}\label{main result}

Denote by $D(I)$ the Skorokhod space of c\`{a}dl\`{a}g functions defined on an interval $I$, by $\cN(0,1)$ the standard normal law and by $\Phi$ its distribution function. We write ${\overset{{\rm f.d}}\longrightarrow}$  and ${\overset{{\rm d}}\longrightarrow}$ for weak convergence of finite-dimensional and one-dimensional distributions, respectively, and in the statement of Theorem \ref{thm:main}, the random variable $\mathcal{S}_{\alpha}(1)$, $h_{\alpha}$ and a smooth version of $c_{\alpha}$ are as defined in the previous subsection. Finally, let $V$ be the renewal function of $(S_{n})_{n\ge 1}$ and thus also its decoupling $(\wh{S}_{n})_{n\ge 1}$, that is
$$ V(t)\,:=\,\sum_{n\ge 1}\Prob\{S_{n}\le t\}\,=\,\sum_{n\ge 1}\Prob\{\wh{S}_{n}\le t\}\,=\,\Erw\wh{N}(t),\quad t\ge 0. $$

\begin{thm}\label{thm:main}
If (A1), (A2), or (A3) holds, then
$$ \bigg(\frac{\wh{N}(h_{\alpha}(t+u))-V(h_{\alpha}(t+u)) 
}{(\mu^{-1-1/\alpha} c_{\alpha}(h_{\alpha}(t)))^{1/2}}\bigg)_{u\in\R}\ {\overset{{\rm f.d}}\longrightarrow}\ X_{\alpha}\quad\text{as }t\to\infty, $$
where $X_{\alpha}=(X_{\alpha}(u))_{u\in\R}$ is a centered stationary Gaussian process with covariance function
\begin{equation}\label{eq:covar1}
{\rm Cov}\,(X_{\alpha}(u),X_{\alpha}(v))\ =\ \int_{\R} \Prob\{\mathcal{S}_{\alpha}(1)>a_{\alpha} (u\vee v)+y\}\Prob\{\mathcal{S}_{\alpha}(1)\le a_{\alpha}(u\wedge v)+y\}\ {\rm d}y
\end{equation}
for $u,v\in\R$ and $a_{\alpha}:=\mu^{1/\alpha}\alpha/(\alpha-1)$. Furthermore,
\begin{align}\label{eq:covar}
{\rm Cov}\,(X_{2}(u), X_{2}(v))\ =\ \pi^{-1/2}\exp(-a_{2}^{2}(u-v)^{2}/4)-a_{2} |u-v|\big(1-\Phi(2^{-1/2}a_{2}|u-v|)\big)
\end{align}
for all $u,v\in\R$. Under the additional assumption that the function $V$ is Lipschitz continuous on $[0,\infty)$, even
$$ \bigg(\frac{\wh{N}(h_{\alpha}(t+u))-V(h_{\alpha}(t+u)) 
}{(\mu^{-1-1/\alpha} c_{\alpha}(h_{\alpha}(t)))^{1/2}}\bigg)_{u\in\R}\ \Longrightarrow\ X_{\alpha}\quad\text{as } t\to\infty $$
in the $J_{1}$-topology on $D(\R)$ holds true. 
\end{thm}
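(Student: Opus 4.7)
My plan exploits the key structural observation that $\wh{N}(s)=\sum_{n\ge 1}\1_{\{\wh{S}_n\le s\}}$ is a sum of \emph{independent}, though non-identically distributed, indicator random variables. For any $u_1<\cdots<u_k\in\R$, the vector $(\wh{N}(h_\alpha(t+u_j))-V(h_\alpha(t+u_j)))_{j=1}^k$ is thus a sum over $n\ge 1$ of independent centered vectors with components bounded by $1$ in absolute value. Since $(\mu^{-1-1/\alpha}c_\alpha(h_\alpha(t)))^{1/2}\to\infty$, the Lindeberg condition of the multivariate CLT is trivially satisfied, and finite-dimensional convergence reduces to identifying the limit of the covariance matrix.

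For $u\le v$ and $s_u:=h_\alpha(t+u)$, independence produces the exact identity
$$\cov\bigl(\wh{N}(s_u),\wh{N}(s_v)\bigr)\;=\;\sum_{n\ge 1}\Prob\{S_n\le s_u\}\,\Prob\{S_n>s_v\}.$$
I reparametrize $n=\mu^{-1}h_\alpha(t)+z\,\mu^{-1-1/\alpha}c_\alpha(h_\alpha(t))$ and invoke the duality $\Prob\{S_n>s\}=\Prob\{\tau(s)\le n\}$ together with the one-point marginal of the FLT \eqref{eq:FLT for N(t)} and the asymptotics $h_\alpha(t+u)-h_\alpha(t)\sim u\tfrac{\alpha}{\alpha-1}c_\alpha(h_\alpha(t))$ from \eqref{eq:diff} to obtain, with $a_\alpha=\mu^{1/\alpha}\alpha/(\alpha-1)$,
$$\Prob\{S_n\le s_u\}\,\longrightarrow\,\Prob\{\mathcal{S}_\alpha(1)>z-a_\alpha u\},\qquad\Prob\{S_n>s_v\}\,\longrightarrow\,\Prob\{\mathcal{S}_\alpha(1)\le z-a_\alpha v\}.$$
A Riemann-sum approximation, whose Jacobian $\mu^{-1-1/\alpha}c_\alpha(h_\alpha(t))$ precisely cancels the square of the normalizing constant, followed by the affine change of variable $y=-z-a_\alpha v$, then delivers \eqref{eq:covar1}. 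The delicate step is to justify the sum-to-integral passage uniformly in $n$: I couple a uniform local form of the CLT on the bulk region $|z|\le Z$ with tail control outside, the latter supplied by the monotonicity of $n\mapsto\Prob\{S_n\le s\}$ and the integrability of $F(\cdot)(1-F(\cdot))$, where $F$ is the distribution function of $\mathcal{S}_\alpha(1)$. Setting $u=v$ makes the resulting expression independent of $u$, confirming stationarity.

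When $\alpha=2$ and $\mathcal{S}_2(1)\sim\cN(0,1)$, the integral \eqref{eq:covar1} reduces, for $\delta:=a_2|u-v|$, to $I(\delta):=\int_\R\Phi(y)(1-\Phi(y+\delta))\,{\rm d}y$. One has $I(0)=\pi^{-1/2}$ directly, while differentiation under the integral combined with the identity $\int\Phi(y)\Phi'(y+\delta)\,{\rm d}y=\Prob\{Z_1\le Z_2\}=1-\Phi(\delta/\sqrt{2})$ for independent $Z_1\sim\cN(0,1)$, $Z_2\sim\cN(-\delta,1)$, followed by one integration on $[0,\delta]$, produces the closed form \eqref{eq:covar}.

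Finally, for the $J_1$ functional convergence under Lipschitz $V$, it suffices to treat each compact $[-K,K]\subset\R$. The increment bound
$$\var\bigl(\wh{N}(s_v)-\wh{N}(s_u)\bigr)\;\le\;V(s_v)-V(s_u)\;\le\;L\,\bigl[h_\alpha(t+v)-h_\alpha(t+u)\bigr]\;\sim\;L\tfrac{\alpha}{\alpha-1}(v-u)\,c_\alpha(h_\alpha(t))$$
shows that the normalized increment variance is $O(v-u)$ uniformly in large $t$. Since $\wh{N}-V$ decomposes as a sum of independent uniformly bounded processes, Bernstein's inequality applied to dyadic increments produces sub-Gaussian tail bounds sufficient for a standard chaining argument, yielding uniform tightness on $[-K,K]$; this automatically implies $J_1$-tightness since $X_\alpha$ admits a continuous modification. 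The real bottleneck of the whole proof is the uniform local CLT used in the covariance computation, where the quantitative content of (A1)--(A3) is genuinely engaged.
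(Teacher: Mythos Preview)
Your overall architecture matches the paper's: Cram\'er--Wold plus a Lindeberg CLT for the finite-dimensional convergence, a covariance computation via the duality $\{S_n\le s\}=\{\tau(s)>n\}$ and the one-dimensional marginal of \eqref{eq:FLT for N(t)}, and a separate tightness argument under Lipschitz $V$. The Lindeberg step and the identification of the pointwise limit of the covariance integrand are correct, and your derivation of \eqref{eq:covar} via $I'(\delta)=-(1-\Phi(\delta/\sqrt2))$ is a valid alternative to the paper's route through $\Erw(\theta_1-\theta_2-a)_+$.

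The genuine gap is in your tail control for the covariance sum. After reparametrizing $n=\mu^{-1}h_\alpha(t)+z\,b_\alpha(t)$ you must show that $\limsup_{t}\int_{|z|>Z}\Prob\{S_{n(z)}\le s_u\}\Prob\{S_{n(z)}>s_v\}\,{\rm d}z\to 0$ as $Z\to\infty$. Monotonicity of $n\mapsto\Prob\{S_n\le s\}$ together with integrability of the pointwise limit $F(1-F)$ does \emph{not} suffice: a sequence of monotone functions converging pointwise to an integrable limit need not have uniformly small tails (consider $g_t(z)=\1_{[0,1]}(z)+t^{-1}\1_{(1,t]}(z)$, each decreasing, limit $\1_{[0,1]}$, yet $\int g_t\to 2$). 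What is actually needed is a uniform-in-$t$ integrable envelope, and the paper obtains it by invoking the uniform moment bounds $\sup_{t\ge t_0}\Erw\big|(\tau(t)-\mu^{-1}t)/c_\alpha(t)\big|^p<\infty$ for suitable $p>1$ from \cite{Iksanov+Marynych+Meiners:2016}, after which Markov's inequality yields the majorant $C|z|^{-p}$. This external input is precisely the ``quantitative content of (A1)--(A3)'' you correctly flag as the bottleneck, but your proposed mechanism does not deliver it.

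For tightness the paper does not use Bernstein plus chaining; it verifies Billingsley's fourth-moment criterion $\Erw\big[(Z(t,v)-Z(t,u))^2(Z(t,w)-Z(t,v))^2\big]\le C_1(w-u)^2$ by a direct combinatorial calculation, exploiting that the two increments are sums of independent Bernoullis $L_n=\1_{\{s_u<\wh S_n\le s_v\}}$ and $M_n=\1_{\{s_v<\wh S_n\le s_w\}}$ satisfying $L_nM_n=0$, and bounding $\sum_n\Erw L_n$, $\sum_n\Erw M_n$ via the Lipschitz property of $V$ and \eqref{eq:diff}. Your sub-Gaussian approach is plausible in spirit, but since the pre-limit processes have jumps you would be controlling the $D$-modulus only through the fact that the jump sizes are $o(1)$; the paper's fourth-moment route sidesteps this issue entirely and is the cleaner argument here.
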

\begin{rem}
If (A1) holds, in particular $\alpha=2$, and $V$ is Lipschitz continuous, then $h_{2}(t)=\sigma^{2}t^{2}$ and $c_{2}(h_{2}(t))=\sigma^{2} t$ for all $t>0$. As a consequence, the limit assertion of Theorem \ref{thm:main} takes the simpler form
\begin{equation}\label{eq:A11}
\bigg(\frac{\wh{N}(\sigma^{2} (t+u)^{2})- V(\sigma^{2} (t+u)^{2})}{(\mu^{-3/2}\sigma^{2} t)^{1/2}}\bigg)_{u\in\R}~\Longrightarrow~X_{2}\quad\text{as }t\to\infty,
\end{equation}
and $a_{2}=2\mu^{1/2}$. Standard renewal theory provides
$$ -1\,\le\,V(t)-\mu^{-1}t\,\le\,\mu^{-2}\Erw\xi^{2}-1\quad\text{for all }t\ge 0, $$
the left-hand side being a consequence of $t\le \Erw S_{\tau(t)}=\mu\Erw\tau(t)=\mu (V(t)+1)$ for $t\ge 0$ (using Wald's identity), the right-hand side of Lorden's inequality. Hence, by replacing $V(\sigma^{2} (t+u)^{2})$ with $\mu^{-1}\sigma^{2} (t+u)^{2}$ in \eqref{eq:A11}, we conclude that\footnote{For $\alpha>1$ and close to $1$, $(V(t)-\mu^{-1}t)/c_{\alpha}(t)^{1/2}$ does not converge to $0$ as $t\to\infty$. Whenever this is the case, the centering in Theorem \ref{thm:main} cannot be replaced with $\mu^{-1}h_{\alpha}(t+u)$.}
\begin{equation}\label{eq:A1}
\bigg(\frac{\wh{N}(\sigma^{2} (t+u)^{2})-\mu^{-1}\sigma^{2} (t+u)^{2}}{(\mu^{-3/2}\sigma^{2} t)^{1/2}}\bigg)_{u\in\R}~\Longrightarrow~X_{2}\quad\text{as }t\to\infty.
\end{equation}
Assuming the law of $\xi$ to be standard exponential (thus $\mu=\sigma^{2}=1$ and $V(t)=t$ for $t\ge 0$), we recover the result obtained in \cite{Fenzl+Lambert:2021} as Proposition 1.4 and Remark 1 on p.~7424. Putting $u=1$ in \eqref{eq:A1} and noting that, by \eqref{eq:covar}, ${\rm Var}\,X_{2}(1)=\pi^{-1/2}$, we obtain a one-dimensional central limit theorem $$\frac{\wh{N}(t)-\mu^{-1}t}{(\pi^{-1}t)^{1/4}}\ {\overset{{\rm d}}\longrightarrow}\ \cN(0,1).$$
\end{rem}
\begin{rem}
For $\alpha\in (1,2)$, it seems that ${\rm Cov}\,(X_{\alpha}(u), X_{\alpha}(v))$ does not admit a useful semi-explicit representation like \eqref{eq:covar}. However, according to Lemma \ref{lem:calcintegr} below $${\rm Var}\,X_{\alpha}(u)=\pi^{-1}\Gamma(1-1/\alpha)(2\Gamma(1-\alpha)\cos(\pi\alpha/2))^{1/\alpha},\quad u\in\R,$$ where $\Gamma$ is Euler's gamma function.
\end{rem}

Let $W$ be Gaussian white noise on $\R\times [0,1]$ with intensity measure being Lebesgue measure ${\rm Leb}$. This means that, for any Borel sets $A, B\subseteq \R\times [0,1]$ of finite Lebesgue measure, $W(A)$ is a zero-mean Gaussian random variable and
$\Erw W(A)W(B) ={\rm Leb}(A\cap B)$. The weak limit $X_{\alpha}$ arising in Theorem \ref{thm:main} admits an integral representation with respect to $W$. 
\begin{thm}\label{thm:integral}
Putting $\Phi_{\alpha}(y):=\Prob\{\mathcal{S}_{\alpha}(1)\le y\}$ for $y\in\R$, the process $Y_{\alpha}:=(Y_{\alpha}(u))_{u\in\R}$ defined by
$$ Y_{\alpha}(u)\ :=\ \int_{\R\times [0,1]}(\1_{\{y\le \Phi_{\alpha}(a_{\alpha} u+x)\}}-\,\Phi_{\alpha}(a_{\alpha} u+x))\ W({\rm d}x, {\rm d}y)\quad\text{for } u\in\R $$
is a stationary centered Gaussian process with the same covariance function as $X_{\alpha}$, so
$$ {\rm Cov}\,(Y_{\alpha}(u),Y_{\alpha}(v))\ =\ {\rm Cov}\,(X_{\alpha}(u),X_{\alpha}(v))\quad\text{for all }u,v\in\R. $$
Moreover, $Y_{\alpha}$ has a version with sample paths which are H\"{o}lder continuous with exponent $\gamma$ for any $\gamma\in (0,1/2)$.
\end{thm}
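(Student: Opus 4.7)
The plan is to verify that the integrand defining $Y_\alpha(u)$ is square-integrable, extract the Gaussianity, covariance identity and stationarity statements from basic properties of Wiener integrals, and conclude H\"older regularity via Kolmogorov's continuity criterion.

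First I would show that for fixed $u\in\R$ the integrand $g_u(x,y):=\1_{\{y\le \Phi_\alpha(a_\alpha u+x)\}}-\Phi_\alpha(a_\alpha u+x)$ lies in $L^2(\R\times[0,1])$. Integrating in $y$ first yields
$$ \int_0^1 g_u(x,y)^2\,{\rm d}y\ =\ \Phi_\alpha(a_\alpha u+x)(1-\Phi_\alpha(a_\alpha u+x)), $$
and $p(1-p)\le\min(p,1-p)$ gives $\int_{\R\times[0,1]}g_u^2\le\Erw|\mathcal{S}_\alpha(1)|<\infty$, finiteness of the mean being ensured by $\alpha>1$. Consequently $Y_\alpha(u)$ is a well-defined centered Gaussian random variable, and since finite linear combinations of the $g_u$ also lie in $L^2$, the whole family $(Y_\alpha(u))_{u\in\R}$ forms a centered Gaussian process.

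Next, the covariance of two Wiener integrals equals the $L^2$-inner product of the integrands. For $u\le v$ and fixed $x$, writing $p=\Phi_\alpha(a_\alpha u+x)\le q=\Phi_\alpha(a_\alpha v+x)$, splitting $[0,1]$ into $[0,p]$, $(p,q]$, $(q,1]$ gives directly $\int_0^1 g_u(x,y)g_v(x,y)\,{\rm d}y=p(1-q)$; integrating over $x$ matches \eqref{eq:covar1}. Stationarity then follows from Gaussianity together with the substitution $x\mapsto x-a_\alpha(u\wedge v)$, which rewrites the covariance as $\int_\R \Phi_\alpha(x)(1-\Phi_\alpha(a_\alpha|u-v|+x))\,{\rm d}x$ and therefore depends on $(u,v)$ only through $|u-v|$.

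Finally, for the H\"older property I would apply Kolmogorov's continuity theorem. The Wiener isometry and the observation that subtracting an $x$-dependent constant inside the inner $L^2({\rm d}y)$-integral only decreases its norm give
$$ \Erw(Y_\alpha(u)-Y_\alpha(v))^2\ \le\ \int_\R \big(\Phi_\alpha(a_\alpha v+x)-\Phi_\alpha(a_\alpha u+x)\big)\,{\rm d}x. $$
Writing the $\Phi_\alpha$-difference as $\int_{a_\alpha u+x}^{a_\alpha v+x}f_\alpha(t)\,{\rm d}t$ with $f_\alpha$ the density of $\mathcal{S}_\alpha(1)$, swapping the order of integration, and using $\int_\R f_\alpha=1$ collapses the right-hand side to $a_\alpha|u-v|$. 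Since $Y_\alpha(u)-Y_\alpha(v)$ is centered Gaussian, for every $p\in\N$
$$ \Erw|Y_\alpha(u)-Y_\alpha(v)|^{2p}\ \le\ c_p\,(a_\alpha|u-v|)^p, $$
and Kolmogorov's criterion then yields a modification with sample paths H\"older continuous of any exponent $\gamma<(p-1)/(2p)$, hence of any $\gamma\in(0,1/2)$ on letting $p\to\infty$. The only minor obstacle is the Fubini manipulation in the last display bound; everything else follows from standard Wiener-integral machinery and basic Gaussian moment identities.
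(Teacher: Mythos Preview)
Your proof is correct and follows the same overall architecture as the paper's: establish the Wiener integral is well defined, compute the covariance via the $L^2$-isometry (the paper uses the identity $\int_0^1(\1_{\{y\le a\}}-a)(\1_{\{y\le b\}}-b)\,{\rm d}y=a\wedge b-ab$, which is exactly your three-interval split), and then apply the Kolmogorov--Chentsov theorem.

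The one genuine difference is in the variance estimate feeding into Kolmogorov--Chentsov. The paper proves the sharp asymptotic $\Erw(Y_\alpha(u)-Y_\alpha(0))^2\sim a_\alpha|u|$ as $u\to 0$ by invoking the continuous density $g_\alpha$ of $\mathcal{S}_\alpha(1)$ (citing Yamazato's unimodality result), applying the mean value theorem to $\Phi_\alpha(a_\alpha u+y)-\Phi_\alpha(y)$, and passing to the limit. You instead obtain the non-asymptotic bound $\Erw(Y_\alpha(u)-Y_\alpha(v))^2\le a_\alpha|u-v|$ by observing that the centered indicator has smaller $L^2({\rm d}y)$-norm than the uncentered one and collapsing the remaining $x$-integral via Fubini. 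Your route is shorter and avoids the Yamazato reference entirely; the paper's route yields the exact small-increment asymptotic (so slightly more information), but for the H\"older claim only the upper bound is needed, and yours suffices. The Fubini step you flag as a ``minor obstacle'' is in fact immediate by Tonelli, since the integrand is nonnegative.
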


\section{Tail asymptotics for the minimum of the decoupling $(\wh{S}_{n})_{n\ge 1}$}\label{sect:minimum}

In this section, we focus on the logarithmic asymptotics of
$$ \Prob\Big\{\min_{n\ge 1}\,\wh{S}_{n}>t\Big\}\ =\ \prod_{n\ge 1}\Prob\{\wh{S}_{n}>t\}\ =\ \prod_{n\ge 1}\Prob\{S_{n}>t\} $$
as $t\to\infty$ under various assumptions on the distribution of $\xi$. Subsection \ref{sect:light} treats the case when the law of $\xi$ has light tails, that is, when $\Erw\exp(s_{0}\xi)<\infty$ for some $s_{0}>0$, whereas Subsection \ref{sect:heavy} is devoted to the case when the law of $\xi$ has heavy tails and thus $\Erw\exp(s\xi)=\infty$ holds for all $s>0$.

\subsection{Light tails}\label{sect:light}

Under mild assumptions including $\mu=\Erw\xi<\infty$, we will show in Lemma \ref{lem:aux} that the variables $\wh{S}_{n}$ for $n>\lfloor t/\mu\rfloor$ do not contribute to the logarithmic asymptotics of $\Prob\{\min_{n\ge 1}\,\wh{S}_{n}>t\}$ as $t\to\infty$. Under the assumptions of Theorem \ref{thm:min}(a), these asymptotics are driven by $\wh{S}_{n}$ for $n\in [\lfloor at\rfloor, \lfloor t/\mu\rfloor]$ and positive $a$ close to 0. They are therefore determined by the large deviations of the standard random walk $(S_{n})_{n\ge 1}$, which in turn are described by Cram\'er's theorem. This particularly explains the appearance of the Legendre transform $I$ in part (a). Under the assumptions of~Theorem \ref{thm:min}(b2), the asymptotics are driven by the first elements of the sequence $(\wh{S}_{n})_{n\in\N}$ and are thus determined by $-\log \Prob\{\xi>t\}$ as $t\to\infty$. The setting treated in part (b1) is intermediate between the aforementioned two, which manifests itself in $-\log \Prob\{\xi>t\}\sim I(t)$ as $t\to\infty$.

\begin{thm}\label{thm:min}
(a) Assume that
\begin{gather}\label{eq:s0}
\Erw \eee^{s_0\xi}\,<\,\infty\quad\text{for some }s_{0}>0
\shortintertext{and}
\label{eq:integral}
\int_{0}^{1} -y\log \Prob\{\xi>1/y\}\ {\rm d}y\ <\ \infty.
\shortintertext{Then}
\lim_{t\to\infty}-t^{-2}\log \Prob\Big\{\min_{n\ge 1}\,\wh{S}_{n}>t\Big\}\ =\ \int_{0}^{1/\mu}yI(1/y){\rm d}y\ <\ \infty,
\end{gather}
where $\mu=\Erw\xi<\infty$,  $I$ denotes the Legendre transform of the distribution of $\xi$, that is,
$$ I(x)\,:=\,\sup_{s\in J}\,(sx-\log \Erw\exp(s\xi))\quad\text{for }x>0, $$
and $J:=\{s\ge 0: \Erw\exp(s\xi)<\infty\}$.

\vspace{.2cm}\noindent
(b) For some $\alpha\ge 2$ and some $\ell$ slowly varying at $\infty$, assume
\begin{equation}\label{eq:log tail condition}
\lim_{t\to\infty}\frac{-\log \Prob\{\xi>t\}}{t^\alpha\ell(t)}\,=\,c\in (0,\infty).
\end{equation}

\noindent
(b1) If $\alpha=2$, \eqref{eq:integral} fails to hold, and
\begin{equation}\label{eq:bojanic}
\lim_{t\to\infty}\bigg(\frac{\ell(\lambda t)}{\ell(t)}-1\bigg)\log \ell(t)\,=\,0\quad\text{for some }\lambda>1,
\end{equation}
then
$$ \lim_{t\to\infty}\frac{-\log \Prob\{\min_{n\ge 1}\,\wh{S}_{n}>t\}}{t^{2}\ell^{*}(t)}\ =\ c, $$
where $\ell^{*}(t):=\int_{1}^t y^{-1}\ell(y){\rm d}y$ satisfies $\lim_{t\to\infty}\ell^{*}(t)=\infty$.

\vspace{.2cm}\noindent
(b2) If $\alpha>2$, then $$\lim_{t\to\infty}\frac{-\log \Prob\{\min_{n\ge 1}\,\wh{S}_{n}>t\}}{t^\alpha\ell(t)}\ =\ c\zeta(\alpha-1),$$ where $\zeta(x)=\sum_{n\ge 1}n^{-x}$ for $x>1$ is the Riemann zeta function.
\end{thm}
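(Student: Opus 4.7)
The starting point common to all three parts is the product formula resulting from the independence of the $\wh{S}_n$, namely
\begin{equation*}
-\log\Prob\Big\{\min_{n\ge 1}\wh{S}_n>t\Big\}\ =\ \sum_{n\ge 1}f_n(t),\qquad f_n(t)\,:=\,-\log\Prob\{S_n>t\}.
\end{equation*}
Lemma \ref{lem:aux}, announced in the paragraph preceding the theorem, is meant precisely to discard the tail indices $n>\lfloor t/\mu\rfloor$, where $S_n$ concentrates far above $t$ and $f_n(t)$ is summably small on each of the three relevant scales. For the remaining range $n\le \lfloor t/\mu\rfloor$, Cram\'er's theorem provides the fundamental two-sided estimate
\begin{equation*}
n I(t/n)\ \le\ f_n(t)\ =\ nI(t/n)(1+o(1)),
\end{equation*}
the lower bound being Chernoff's and the upper bound (of Bahadur--Rao/Petrov type) valid uniformly as $t/n\to\infty$ thanks to the exponential moment \eqref{eq:s0} (which is automatic in case (b), where $\alpha\ge 2$).

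For part (a) the plan is to approximate the resulting Riemann sum by the integral
\begin{equation*}
\sum_{n=1}^{\lfloor t/\mu\rfloor} n I(t/n)\ \sim\ \int_0^{t/\mu} s\, I(t/s)\,{\rm d}s\ =\ t^{2}\int_0^{1/\mu} y\, I(1/y)\,{\rm d}y,
\end{equation*}
the last equality being the substitution $s=ty$. Finiteness of the limit integral follows from the Chernoff consequence $I(x)\le -\log\Prob\{\xi>x\}$ together with the change of variables $y=1/x$ converting \eqref{eq:integral} to $\int_{\mu}^{\infty}(-\log\Prob\{\xi>x\})x^{-3}{\rm d}x<\infty$. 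To handle the smallest indices $n$, we bound $f_n(t)\le -\log\Prob\{\xi>t\}$ (using $S_n\ge \xi_1$), and show that after summation the resulting contribution is of smaller order than $t^{2}$ thanks again to \eqref{eq:integral}; the boundary indices $n$ close to $t/\mu$ use only the continuity of $I$ at $\mu$ with $I(\mu)=0$.

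Parts (b1) and (b2) both exploit the sharper identification $I(x)\sim -\log\Prob\{\xi>x\}\sim c\,x^{\alpha}\ell(x)$ as $x\to\infty$, which is the classical Legendre-transform computation for distributions with rapidly varying tails (valid here since $\alpha>1$). This yields
\begin{equation*}
f_n(t)\ \sim\ c\,t^{\alpha}\,n^{1-\alpha}\,\ell(t/n)\quad\text{as }t\to\infty,
\end{equation*}
and the two parts differ only in how this asymptotic is summed. In (b2) with $\alpha>2$ the weights $n^{1-\alpha}$ sum to $\zeta(\alpha-1)$; slow variation of $\ell$ combined with Potter's bounds gives a uniform dominating majorant, and dominated convergence delivers $\sum_n f_n(t)\sim c\,t^{\alpha}\ell(t)\zeta(\alpha-1)$. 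In (b1) the divergent sum $\sum n^{-1}$ forces a different approach: for a cutoff $T(t)\to\infty$ chosen suitably slowly, the Riemann-sum approximation
\begin{equation*}
\sum_{n=1}^{T(t)}\frac{c\,t^{2}\ell(t/n)}{n}\ \sim\ c\,t^{2}\int_{t/T(t)}^{t}\frac{\ell(y)}{y}\,{\rm d}y\ =\ c\,t^{2}\bigl(\ell^{*}(t)-\ell^{*}(t/T(t))\bigr)
\end{equation*}
reduces the matter to showing $\ell^{*}(t/T(t))=o(\ell^{*}(t))$, which is exactly what the Bojani\'c--Seneta-type condition \eqref{eq:bojanic} provides. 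The contribution of the intermediate range $T(t)\le n\le \lfloor t/\mu\rfloor$ is handled by the Chernoff upper bound together with Karamata integration.

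The main obstacle across all three parts is the uniform Cram\'er-type replacement $f_n(t)=nI(t/n)(1+o(1))$ over ranges of $n$ that grow with $t$: in (a) the error must survive summation and division by $t^{2}$; in (b1) it must be uniform over indices probing the slowly varying regime of $\ell$; in (b2) it must be tight enough per summand for dominated convergence. A secondary subtlety in (b1) is the calibration of $T(t)$ so that simultaneously $T(t)\to\infty$, $t/T(t)\to\infty$, and $\ell^{*}(t/T(t))/\ell^{*}(t)\to 0$, which is precisely the role of \eqref{eq:bojanic}.
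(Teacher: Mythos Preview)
Your overall architecture matches the paper's---Lemma~\ref{lem:aux} to discard $n>\lfloor t/\mu\rfloor$, Cram\'er for the bulk, boundary control---but there is a genuine gap in the upper bound on $f_n(t)$. The claim that $f_n(t)\le nI(t/n)(1+o(1))$ holds ``uniformly as $t/n\to\infty$'' via Bahadur--Rao/Petrov is not a standard result: already for $n=1$ it would force $-\log\Prob\{\xi>t\}\sim I(t)$, which is \emph{not} implied by \eqref{eq:s0} and \eqref{eq:integral} alone. Your fallback bound $f_n(t)\le -\log\Prob\{\xi>t\}$ for ``smallest indices'' is too crude to control the range $1\le n\le at$: summed and divided by $t^{2}$ it gives $a\cdot t^{-1}(-\log\Prob\{\xi>t\})$, which need not vanish as $a\searrow 0$ (e.g.\ if $-\log\Prob\{\xi>t\}\sim t^{3/2}$). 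The paper sidesteps this entirely with the elementary inequality
\[
\Prob\{S_n>t\}\ \ge\ \Prob\Big\{\min_{1\le k\le n}\xi_k>t/n\Big\}\ =\ \big(\Prob\{\xi>t/n\}\big)^n,
\]
giving $f_n(t)\le -n\log\Prob\{\xi>t/n\}$. This bound is exact for every $n$, and after summation over $n\le at$ and division by $t^2$ it becomes precisely $\int_0^a -y\log\Prob\{\xi>1/y\}\,{\rm d}y$, which vanishes as $a\searrow 0$ by~\eqref{eq:integral}. Cram\'er is then only invoked on $n\in[at,t/\mu]$, where $t/n$ lies in a compact interval and uniform convergence follows from monotonicity. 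The same elementary inequality drives the upper bounds in (b1) and (b2).

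Second, you misidentify the role of \eqref{eq:bojanic}. The existence of a cutoff $T(t)$ with $T(t)\to\infty$, $t/T(t)\to\infty$ and $\ell^*(t/T(t))=o(\ell^*(t))$ is automatic once $\ell^*(t)\to\infty$ (take $T(t)$ so that $\ell^*(t/T(t))=\sqrt{\ell^*(t)}$; slow variation of $\ell^*$ forces $t/T(t)=o(t)$) and needs no extra hypothesis. Condition \eqref{eq:bojanic} is used instead to prove the Legendre-transform asymptotic $I(x)\sim c\,x^{2}\ell(x)$, which is nontrivial at $\alpha=2$: Kasahara's Tauberian theorem gives $\log\Erw e^{s\xi}\sim cs^{2}/(4\ell(s))$, and inverting this Legendre transform to recover $I$ requires controlling the de Bruijn conjugate of $\ell$, for which \eqref{eq:bojanic} is precisely the right condition (Lemma~\ref{lem:two}). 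In the paper the lower bound on $\sum f_n(t)$ in (b1) comes from Chernoff combined with this lemma, and the upper bound from the elementary inequality above; no cutoff is introduced.
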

\begin{rem}
We stress that Condition \eqref{eq:integral} does not necessarily entail \eqref{eq:s0}. For instance, if $-\log \Prob\{\xi>t\}\sim ct^\alpha$ for some $c>0$ and $\alpha\in (0,1)$, then \eqref{eq:integral} holds, but \eqref{eq:s0} does not. A suf\-ficient condition for both \eqref{eq:s0} and \eqref{eq:integral} is $-\log \Prob\{\xi>t\}\sim t^\alpha \ell(t)$ for some $\alpha\in [1,2)$ and some $\ell$ slowly varying at $\infty$. If $\alpha=2$, then \eqref{eq:integral} holds for some $\ell$ and fails to hold for the other.
\end{rem}
\begin{rem}
Let $\xi$ have a standard exponential distribution ($\mu=1$). Then $I(x)=x-1-\log x$ for $x>0$ and $\int_{0}^{1}yI(1/y){\rm d}y=\int_{0}^{1}(1-y+y\log y){\rm d}y=1/4$. With this at hand, we recover the result obtained in Proposition 7.2.1 on p.~124 of \cite{Hough+Krishnapur+Peres+Virag:2009}.
\end{rem}
\begin{rem}
Relation \eqref{eq:bojanic} is satisfied if $\ell$ converges to a positive constant, by $\ell(x)=(\log_{k} x)^\alpha$ for $\alpha\in\R$, where $\log_{k}$ is the $k$th iterate of $\log$, and by products of such $\ell$, see Example 1 on p.~433 in \cite{Bingham+Goldie+Teugels:1989}.
\end{rem}

\subsection{Heavy tails}\label{sect:heavy}

The case when the law of $\xi$ has heavy tails is divided into two subcases treated in Theorems \ref{thm:heavy} and \ref{thm:semi}. In the first subcase, the law of $\xi$ has regularly varying tails of index $0<\alpha\ne 1$. Then,
\begin{itemize}
\item if $\alpha\in (0,1)$ and thus $\mu=\Erw\xi=\infty$, the logarithmic asymptotics of $\Prob\{\min_{n\ge 1}\,\wh{S}_{n}>t\}$ are driven by the variables $\wh{S}_{n}$ for $n\in [\lfloor a/\Prob\{\xi>t\}\rfloor,\lfloor b/\Prob\{\xi>t\}\rfloor]$ with positive $a$ close to $0$ and large $b$ (Theorem \ref{thm:heavy}(a)) and thus by the distributional convergence of $\Prob\{\xi>t\}\tau(t)$. For further explanation, we refer to \eqref{eq:distrconv} where the convergence is stated.
\item if $\alpha>1$, these asymptotics are driven by the $\wh{S}_{n}$ for $n\in [\lfloor at \rfloor, \lfloor t/(\mu+\delta)\rfloor]$ with positive $a$ and $\delta$ close to $0$ and therefore by the large deviation behavior of the random walk $(S_{n})_{n\ge 1}$. More importantly, such $n$ belong to the `one big-jump domain', that is, $\Prob\{S_{n}-\mu n>t\}\sim \Prob\{\max_{1\le k\le n}\xi_{k}>t\}\sim n\Prob\{\xi>t\}$ as $t\to\infty$. (Theorem \ref{thm:heavy}(b))
\end{itemize}
In the second subcase, treated by Theorem \ref{thm:semi}, the driving force behind the asymptotics is still the `one big-jump domain', which covers all positive integers $n\le \lfloor t/\mu\rfloor$. All larger integers $n$ do not contribute to the asymptotics in question as will be shown in Lemma \ref{lem:aux}.

\vspace{.1cm}
In order to state our results, let $(W_{\alpha}(t))_{t\ge 0}$ for $\alpha\in (0,1)$ denote a drift-free $\alpha$-stable subordinator with
$$ -\log \Erw\exp(-zW_{\alpha}(t))\ =\ \Gamma(1-\alpha) tz^\alpha\quad\text{for }z\ge 0, $$
where $\Gamma$ is again Euler's gamma function. Let further $W_{\alpha}^{\leftarrow}$ denote an inverse $\alpha$-stable subordinator, defined by $W_{\alpha}^{\leftarrow}(t):=\inf\{s\ge 0: W_{\alpha}(s)>t\}$ for $t\ge 0$. The law of $W_{\alpha}^{\leftarrow}(1)$ is known in the literature as a Mittag-Leffler distribution with parameter $\alpha\in (0,1)$, the name stemming from the fact that
\begin{equation}\label{mgf}
\Erw\exp(s\Gamma(1-\alpha)W_{\alpha}^{\leftarrow}(1))\ =\ \sum_{n\ge 0}\frac{s^n}{\Gamma(1+n\alpha)},\quad s\ge 0,
\end{equation}
and that the right-hand side defines the Mittag-Leffler function with parameter $\alpha$, a generalization of the exponential function which corresponds to $\alpha=1$.

\begin{thm}\label{thm:heavy}
Assume $\Prob\{\xi>t\}\sim t^{-\alpha}\ell(t)$ as $t\to\infty$ for some $\alpha>0$ and some $\ell$ slowly varying at $\infty$.

\vspace{.2cm}\noindent
(a) If $\alpha\in (0,1)$, then
$$ \lim_{t\to\infty} -\log \Prob\Big\{\min_{n\ge 1}\,\wh{S}_{n}>t\Big\}\,\Prob\{\xi>t\}\ =\ \int_{0}^{\infty} -\log \Prob\{W_{\alpha}^{\leftarrow}(1)\le x\}\ {\rm d}x\ <\ \infty.$$

\vspace{.2cm}\noindent
(b) If $\alpha>1$, then
$$ \lim_{t\to\infty}\frac{-\log \Prob\{\min_{n\ge 1}\,\wh{S}_{n}>t\}}{t\log t}\ =\ \frac{\alpha-1}{\mu}, $$
where $\mu=\Erw\xi<\infty$.
\end{thm}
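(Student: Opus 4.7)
The starting point for both parts is the identity
\[
-\log \Prob\Big\{\min_{n\ge 1}\wh{S}_n > t\Big\}
\;=\; \sum_{n\ge 1} -\log \Prob\{S_n > t\}
\;=\; \sum_{n\ge 1} -\log \Prob\{\tau(t) \le n\},
\]
where the second equality uses $\{S_n > t\} = \{\tau(t) \le n\}$, valid since the increments $\xi_n$ are nonnegative. The two parts differ in which range of $n$ dominates this sum.

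\emph{Part (a).} I put $q(t):=\Prob\{\xi>t\}$ and use the convergence $q(t)\tau(t)\Longrightarrow W_\alpha^\leftarrow(1)$ recorded in \eqref{eq:distrconv}. Multiplying the sum by $q(t)$, I view it as a Riemann sum approximating
\[
\int_0^\infty -\log \Prob\{q(t)\tau(t)\le x\}\,\dd x.
\]
The integrand converges pointwise to $-\log\Prob\{W_\alpha^\leftarrow(1)\le x\}$, and the limit integral is finite: the scaling identity $\Prob\{W_\alpha^\leftarrow(1)\le x\}=\Prob\{W_\alpha(1)\ge x^{-1/\alpha}\}$, combined with the tail $\Prob\{W_\alpha(1)\ge y\}\sim y^{-\alpha}$, gives $\Prob\{W_\alpha^\leftarrow(1)\le x\}\sim x$ as $x\downarrow 0$, so the integrand behaves like $-\log x$ near the origin, while \eqref{mgf} forces super-polynomial decay of $\Prob\{W_\alpha^\leftarrow(1)>x\}$ at infinity. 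Dominated convergence then yields the claimed limit, provided uniform-in-$t$ bounds on $\Prob\{q(t)\tau(t)\le x\}$ (linear from below near $0$, super-polynomially close to $1$ at infinity) are supplied; these follow from one-big-jump asymptotics for $\Prob\{S_n > t\}$ combined with small-ball estimates for stable subordinators transferred via the regular variation of $q$.

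\emph{Part (b).} With $\mu<\infty$, I fix small $\delta>0$ and split at $n^{\pm}:=\lfloor(1\pm\delta)t/\mu\rfloor$. Terms with $n\ge n^+$ contribute negligibly by Lemma~\ref{lem:aux}. For $n\le n^-$, the one-big-jump principle for random walks with regularly varying increments (Nagaev; Denisov--Dieker--Shneer) gives, uniformly,
\[
\Prob\{S_n>t\}\;\sim\; n\,\Prob\{\xi>t-\mu n\},
\]
since $t-\mu n\ge\delta t\to\infty$. Taking $-\log$ and using $-\log\Prob\{\xi>s\}=\alpha\log s + o(\log s)$, then substituting $k=\lfloor t/\mu\rfloor - n$ and invoking Stirling, the main sum evaluates to $(\alpha-1)(1-\delta)(t/\mu)\log t\,(1+o(1))$. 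The intermediate band $n^-<n<n^+$ contains $O(\delta t)$ terms, each bounded by the crude estimate $-\log\Prob\{S_n>t\}\le -\log\Prob\{\xi_1>t\}\sim\alpha\log t$, so it contributes $O(\delta t\log t)$. Letting $\delta\downarrow 0$ pinches the limit to $(\alpha-1)/\mu$.

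The main obstacle in Part (a) is supplying the uniform dominating function for the integrand: weak convergence alone is insufficient, and one must verify both the logarithmic blow-up near $0$ and the super-polynomial decay at infinity uniformly in $t$, by translating small-ball and large-deviation information for the stable subordinator back to $(S_n)$ via regular variation. In Part (b), the delicate point is the uniform validity of the one-big-jump asymptotics across $n\le n^-$ with $n$ growing with $t$, together with the pinch argument needed to neutralize the boundary region $n\sim t/\mu$ where the heavy-tailed large-deviation approximation deteriorates.
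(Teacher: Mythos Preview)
Your overall strategy coincides with the paper's in both parts: in~(a) you rescale the sum by $q(t)=\Prob\{\xi>t\}$, interpret it as an approximate integral of $-\log\Prob\{q(t)\tau(t)\le x\}$, and pass to the limit via \eqref{eq:distrconv}; in~(b) you invoke Lemma~\ref{lem:aux} for the tail, the uniform one-big-jump asymptotics \eqref{eq:Cline+Hsing LDP} for $n\le n^-$, and a crude bound on the boundary band. Part~(b) is essentially the paper's argument (the paper splits at $\lfloor t/(\mu+\delta)\rfloor$ and $\lfloor t/\mu\rfloor$ rather than your $n^\pm$, but this is cosmetic).

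The one place where you leave a genuine gap is the uniform domination in part~(a), which you yourself flag as the main obstacle. Your suggestion of ``small-ball estimates for stable subordinators transferred via regular variation'' is not how the paper closes this, and it is not clear how such a transfer would yield a bound on $\Prob\{q(t)\tau(t)>x\}$ that is uniform in $t$. The paper's route is more direct and different from yours: for large $x$ it uses $-\log(1-p)\le 2p$ together with the \emph{uniform second-moment bound} $\sup_{t\ge 1}\Erw(q(t)\tau(t))^2<\infty$ (quoted from \cite{Iksanov+Marynych+Meiners:2016}), which immediately gives an integrable majorant $Cx^{-2}$; for small $x$ it uses the elementary inequality $\Prob\{S_n>t\}\ge \Prob\{\max_{k\le n}\xi_k>t\}$, i.e.\ the trivial half of the one-big-jump heuristic, to get $\Prob\{q(t)\tau(t)\le x\}\ge (1-a)x$ uniformly for $x\in[q(t),a]$, whence the integrand is dominated by $-\log((1-a)x)$. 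So your near-zero bound via ``one-big-jump'' is right in spirit, but your large-$x$ bound should be replaced by the moment bound rather than an appeal to limiting small-ball behavior.
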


\begin{thm}\label{thm:semi}
Assume $\Prob\{\xi>t\}=\eee^{-t^{\alpha}\ell(t)}$ for $t>0$, some $\alpha\in (0,1)$ and some $\ell$ slowly varying at $\infty$. Putting $H(t):=-\log \Prob\{\xi>t\}$, assume also
\begin{equation}\label{eq:borov}
H(t+o(t))-H(t)\ =\ \alpha\,o(t)\,t^{-1}H(t)(1+o(1))+o(1)\quad\text{as }t\to\infty.
\end{equation}
Then
$$ \lim_{t\to\infty} \frac{-\log \Prob\{\min_{n\ge 1}\,\wh{S}_{n}>t\}}{t^{\alpha+1}\ell(t)}\ =\ \frac{1}{\mu (\alpha+1)}, $$
where $\mu=\Erw\xi<\infty$.
\end{thm}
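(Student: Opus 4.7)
The starting point is the independence identity
\[
-\log\Prob\{\min_{n\ge 1}\wh{S}_n>t\} \,=\, \sum_{n\ge 1}-\log\Prob\{S_n>t\}.
\]
I split the sum at $N:=\lfloor t/\mu\rfloor$. Lemma~\ref{lem:aux}, set up in the paper precisely for this purpose, shows that the tail $\sum_{n>N}$ contributes only $o(t^{\alpha+1}\ell(t))$. For $n\le N$ the event $\{S_n>t\}$ lies in the large-deviation regime ($\Erw S_n=n\mu\le t$), and since $\Prob\{\xi>t\}=\eee^{-H(t)}$ with $\alpha\in(0,1)$ makes $\xi$ subexponential, the one-big-jump principle yields
\[
\Prob\{S_n>t\} \,=\, \Prob\{S_n-n\mu>t-n\mu\} \,\sim\, n\,\Prob\{\xi>t-n\mu\} \,=\, n\,\eee^{-H(t-n\mu)}
\]
uniformly in $1\le n\le(1-\eps)N$ for any fixed $\eps\in(0,1)$. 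This is a Rozovskii--Borovkov-style uniform large-deviation estimate, and the smoothness condition \eqref{eq:borov} is exactly what enables its uniform version in the present setting.

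Taking $-\log$ gives, uniformly on the main range,
\[
-\log\Prob\{S_n>t\} \,=\, H(t-n\mu) - \log n + o\bigl(H(t-n\mu)\bigr).
\]
Summed over $n\le N$, the $\log n$ term produces $O(t\log t)=o(t^{\alpha+1}\ell(t))$ since $\alpha>0$ and $\ell$ is slowly varying. For the boundary range $(1-\eps)N<n\le N$ I use a crude upper bound on each summand; after the substitution $u=t-n\mu$ the contribution is at most $(1+o(1))\mu^{-1}\int_0^{\eps t}H(u)\,{\rm d}u = O(\eps^{\alpha+1}t^{\alpha+1}\ell(t))$, which vanishes relative to $t^{\alpha+1}\ell(t)$ once $\eps\downarrow 0$ at the end.

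The dominant contribution is the Riemann-type sum
\[
\sum_{n=1}^{(1-\eps)N} H(t-n\mu) \,\sim\, \frac{1}{\mu}\int_{\eps t}^{t}H(u)\,{\rm d}u \,\sim\, \frac{1}{\mu}\int_0^{t}u^\alpha\ell(u)\,{\rm d}u \,\sim\, \frac{t^{\alpha+1}\ell(t)}{\mu(\alpha+1)},
\]
where the first $\sim$ is the Riemann-to-integral approximation (valid because \eqref{eq:borov} forces $H$ to behave essentially like a smoothly varying function of index $\alpha$, making consecutive differences $H(u)-H(u-\mu)$ of the smaller order $u^{-1}H(u)$), and the last $\sim$ is Karamata's theorem. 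Combining the three pieces and letting $\eps\downarrow 0$ yields the claimed limit.

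The most delicate step is the uniform one-big-jump asymptotic: for each fixed $n$ it is classical subexponentiality, but extending uniformity up to $n\approx(1-\eps)t/\mu$ (where the tail argument $t-n\mu$ is only of order $\eps t$) and tracking the multiplicative error so that, after passing to $-\log$ and summing, it remains of smaller order than $t^{\alpha+1}\ell(t)$ is the technical heart of the argument. The regularity condition \eqref{eq:borov} is introduced precisely to make this possible.
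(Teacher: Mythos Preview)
Your proof follows essentially the same route as the paper's: invoke Lemma~\ref{lem:aux} for $n>\lfloor t/\mu\rfloor$, apply a Borovkov--Mogul'skii semiexponential large-deviation estimate for $n\le\lfloor t/\mu\rfloor$, turn the resulting sum into $\mu^{-1}\int_0^t H(u)\,{\rm d}u$, and finish with Karamata's theorem.

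Two remarks. First, the paper cites Borovkov--Mogul'skii (Thm.~2.1 in \cite{Borovkov+Mogulskii:2006}) in its \emph{logarithmic} form,
\[
\log\Prob\{S_n-n\mu>x\}\ \sim\ \log\Prob\{\xi>x\}\quad\text{uniformly for }x\ge n^{1/(2-\alpha)}\ell_1(n)f(n),
\]
whereas you state the stronger \emph{direct} asymptotic $\Prob\{S_n>t\}\sim n\,\Prob\{\xi>t-n\mu\}$. The logarithmic version is both what is readily available under hypothesis~\eqref{eq:borov} and all that is needed: once you take $-\log$, the extra $\log n$ is $O(\log t)=o(H(t-n\mu))$ on your main range anyway, so it is absorbed into the error. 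Second, your explicit $\eps$-truncation near $n=\lfloor t/\mu\rfloor$ is in fact more careful than the paper's writeup, which asserts the Borovkov--Mogul'skii condition for \emph{all} $n\le\lfloor t/\mu\rfloor$ although it fails when $t-n\mu$ is too small; the boundary contribution is in any case $O(\eps\, t^{\alpha+1}\ell(t))$ (via the crude bound $-\log\Prob\{S_n>t\}\le H(t)$ from $S_n\ge\xi_1$), so the conclusion is unaffected.
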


\begin{rem}
We note that \eqref{eq:borov} is not very restrictive and refer to p.~931 in \cite{Borovkov+Mogulskii:2006}, where sufficient conditions are provided.
\end{rem}

\section{The sequence of decoupled maxima and first-passage times}

Recall that $M_{n}=\max_{1\le k\le n}\,\wh{S}_{k}$ for $n\in\N$, $\wh \tau(t)=\inf\{n\in\N: M_{n}>t\}$ for $t\ge 0$ and $\mu=\Erw\xi$. We state our result in the subsequent theorem.

\begin{thm}\label{thm:SLLN maxima}
Let the law of $\xi$ be nondegenerate. Then the following assertions hold.
\begin{itemize}
\item[(a)] If $\Erw\xi^{2}<\infty$, then
\begin{equation}\label{eq:slln}
\lim_{n\to\infty}\frac{M_{n}}{n}\,=\,\mu\quad\text{and}\quad\lim_{t\to\infty}\frac{\wh \tau(t)}{t}\,=\,\frac{1}{\mu}\quad\text{a.s.}
\end{equation}
\item[(b)] If $\mu<\infty$ and $\Erw\xi^{2}=\infty$, then
\begin{gather}\label{eq:slln2}
\limsup_{n\to\infty}\frac{M_{n}}{n}\,=\,\infty\quad\text{and}\quad\liminf_{t\to\infty}\frac{\wh \tau(t)}{t}\,=\,0\quad\text{a.s.}
\shortintertext{Moreover, even}
\lim_{n\to\infty}\frac{M_{n}}{n}\,=\,\infty\quad\text{and}\quad\lim_{t\to\infty}\frac{\wh\tau(t)}{t}\,=\,0\quad\text{a.s.}\label{eq:slln3}
\intertext{holds under the additional assumption $\lim_{t\to\infty}t^{2}\,\Prob\{\xi>t\}/\log\log t=\infty$, whereas}
\liminf_{n\to\infty}\frac{M_{n}}{n}\,=\,\mu\quad\text{and}\quad\limsup_{t\to\infty}\frac{\wh \tau(t)}{t}\,=\,\frac{1}{\mu}\quad\text{a.s.}\label{eq:slln4}
\end{gather}
if $\lim_{t\to\infty}t^{2}\,\Prob\{\xi>t\}/\log\log t=0$.
\item[(c)] If $\mu=\infty$, then \eqref{eq:slln3} holds.
\item[(d)] The family $\{t^{-1}\wh{\tau}(t):t\ge t_{0}\}$ is uniformly integrable for any $t_{0}>0$ and therefore
$$ \lim_{t\to\infty}\frac{\Erw\wh\tau(t)}{t}\ =\ \lim_{t\to\infty}\frac{\wh\tau(t)}{t} $$
whenever the second limit exists a.s. In particular, the limit is equal to $1/\mu$ if $\Erw\xi^{2}<\infty$.
\end{itemize}
\end{thm}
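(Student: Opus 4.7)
For (a), I invoke the Hsu--Robbins--Erd\H{o}s theorem: the hypothesis $\Erw\xi^{2}<\infty$ entails $\sum_{n\ge 1}\Prob\{|S_{n}-n\mu|>\eps n\}<\infty$ for every $\eps>0$, and because $\wh S_{n}\stackrel{d}{=}S_{n}$ the first Borel--Cantelli lemma gives $\wh S_{n}/n\to\mu$ a.s. On such a sample path, $\wh S_{j}\le(\mu+\eps)j$ for all $j\ge n_{0}$, whence $M_{n}\le M_{n_{0}}\vee(\mu+\eps)n$; combined with $M_{n}\ge\wh S_{n}$ this yields $M_{n}/n\to\mu$ a.s. Inverting $M_{\wh\tau(t)-1}\le t<M_{\wh\tau(t)}$ transfers the SLLN to $\wh\tau$. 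For (c), since $\xi\ge 0$ and $\Erw\xi=\infty$, the Laplace exponent $s\mapsto\log\Erw\eee^{-s\xi}$ is smooth on $(0,\infty)$ with right derivative $-\infty$ at $s=0$; a Cram\'er--Chernoff bound gives $\Prob\{S_{n}\le Mn\}\le\eee^{-c(M)n}$ with $c(M)>0$ for every $M>0$, Borel--Cantelli forces $\wh S_{n}/n\to\infty$ a.s., and hence $M_{n}/n\to\infty$ and $\wh\tau(t)/t\to 0$ a.s.

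\textbf{Part (b).} Writing $\wh S_{n}=\sum_{k=1}^{n}\xi_{k}^{(n)}$ with jointly independent blocks, the maxima $\eta_{n}:=\max_{k\le n}\xi_{k}^{(n)}$ satisfy $\eta_{n}\le\wh S_{n}$. For \eqref{eq:slln2}, the inequality $\Prob\{\wh S_{n}>Cn\}\ge 1-(1-\Prob\{\xi>Cn\})^{n}$ combined with $\sum_{n}n\Prob\{\xi>Cn\}\asymp C^{-2}\Erw\xi^{2}/2=\infty$ yields $\sum_{n}\Prob\{\wh S_{n}>Cn\}=\infty$; Borel--Cantelli applied to the independent $\{\wh S_{n}>Cn\}$ gives $\limsup M_{n}/n\ge C$, and the $\wh\tau$-statement follows by evaluating along the exceedance times. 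For \eqref{eq:slln3}, pick $n_{k}=2^{k}$ and use $M_{n_{k}}\ge\max_{j\le n_{k}}\eta_{j}$ to obtain
$$\Prob\{M_{n_{k}}\le Cn_{k}\}\ \le\ \prod_{j=1}^{n_{k}}(1-p_{k})^{j}\ \le\ \exp(-p_{k}n_{k}^{2}/2),\qquad p_{k}:=\Prob\{\xi>Cn_{k}\},$$
which is summable thanks to $n_{k}^{2}p_{k}\gg\log\log n_{k}\sim\log k$; the first Borel--Cantelli lemma and monotonicity of $M_{n}$ then upgrade $M_{n_{k}}>Cn_{k}$ eventually to $\liminf M_{n}/n\ge C/2$, hence $M_{n}/n\to\infty$. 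For \eqref{eq:slln4}, the lower bound $\liminf M_{n}/n\ge\mu$ repeats the Chernoff + Borel--Cantelli argument of (c) with $M=\mu-\eps$. The reverse inequality $\liminf M_{n}/n\le\mu$ is the \emph{main obstacle}. The plan is, via Fuk--Nagaev single-big-jump large-deviation bounds, to show $\sum_{j\le n}\Prob\{\wh S_{j}>(\mu+\eps)n\}=o(\log\log n)$ under the standing tail hypothesis, so that
$$\Prob\{M_{n}\le(\mu+\eps)n\}\ =\ \prod_{j=1}^{n}(1-\Prob\{\wh S_{j}>(\mu+\eps)n\})\ \ge\ \eee^{-o(\log\log n)},$$
and hence $\sum_{k}\Prob\{M_{n_{k}}\le(\mu+\eps)n_{k}\}$ diverges along $n_{k}=2^{k}$. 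The lack of independence of the events $A_{k}=\{M_{n_{k}}\le(\mu+\eps)n_{k}\}$ is then handled by a Kochen--Stone second-moment argument, with $\Prob(A_{k}\cap A_{\ell})$ for $k<\ell$ bounded by conditioning on $\sigma(\wh S_{j}:j\le n_{k})$ and exploiting the independence of the remaining blocks.

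\textbf{Part (d).} For uniform integrability of $\{t^{-1}\wh\tau(t):t\ge t_{0}\}$ it suffices to bound $\Erw(\wh\tau(t)/t)^{1+\delta}$ uniformly in $t$ for some $\delta>0$. Starting from
$$\Erw\wh\tau(t)^{1+\delta}\ =\ (1+\delta)\int_{0}^{\infty}s^{\delta}\,\Prob\{M_{\lfloor s\rfloor}\le t\}\,{\rm d}s$$
and splitting at $s=2t/\mu$, the trivial bound $\Prob\le 1$ handles the region $s\le 2t/\mu$ with contribution $O(t^{1+\delta})$, while the Chernoff estimate $\Prob\{M_{\lfloor s\rfloor}\le t\}\le\Prob\{\wh S_{\lfloor s\rfloor}\le t\}\le\eee^{-cs}$ (valid for $s\ge 2t/\mu$ because $t<\mu\lfloor s\rfloor/2$ there) yields a bounded contribution above the threshold. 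Hence $\Erw\wh\tau(t)^{1+\delta}=O(t^{1+\delta})$; UI of $\{t^{-1}\wh\tau(t)\}$ follows and gives $\lim_{t}\Erw\wh\tau(t)/t=\lim_{t}\wh\tau(t)/t$ whenever the latter exists a.s., with the special value $1/\mu$ under $\Erw\xi^{2}<\infty$ coming from part (a).
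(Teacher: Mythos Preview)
Your treatment of parts (a), (c), the first half of (b), and part (d) is sound and largely parallels the paper, with minor stylistic differences: for (c) and the lower bound $\liminf M_{n}/n\ge\mu$ in (b) you use a Chernoff bound where the paper uses one-sided Hsu--Robbins--Erd\H{o}s and a truncation, and for (d) you bound a $(1+\delta)$-moment directly whereas the paper simply dominates $\wh\tau(t)$ stochastically by the coupled first-passage time $\tau(t)$ and quotes uniform integrability of $t^{-1}\tau(t)$ from renewal theory. Both routes work (your (d) needs a one-line adjustment when $\mu=\infty$, replacing the cut $2t/\mu$ by $t/M$ for large $M$).

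The substantive gap is in the upper bound $\liminf_{n}M_{n}/n\le\mu$ under $t^{2}\Prob\{\xi>t\}=o(\log\log t)$. Your plan---Fuk--Nagaev plus Kochen--Stone along $n_{k}=2^{k}$---is only a sketch, and neither step is routine. For Fuk--Nagaev you are in the regime $\Erw\xi^{2}=\infty$, so the inequality must be applied with a truncated second moment $v(y)=\Erw(\xi-\mu)^{2}\1_{\{\xi-\mu\le y\}}$; one has to check that the tail hypothesis forces $v(y)=o((\log y)(\log\log y))$ so that the Gaussian-type term is negligible and $\sum_{j\le n}\Prob\{S_{j}>(\mu+\eps)n\}=o(\log\log n)$ really follows. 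More seriously, the Kochen--Stone verification is not supplied: with $A_{k}=\{M_{2^{k}}\le(\mu+\eps)2^{k}\}$ one has $\Prob(A_{k}\cap A_{l})=\Prob(A_{k})\Prob(A_{l})/R_{k,l}$ where $-\log R_{k,l}\lesssim 4^{k-l}\cdot o(\log l)$, so for $l$ close to $k$ the correlation factor $1/R_{k,l}$ can be as large as $l^{o(1)}$, and summing these near-diagonal contributions against $(\sum_{k}p_{k})^{2}$ is delicate; you also need a $0$--$1$ law (tail triviality of $(\wh S_{n})$) to upgrade positive probability to probability one, which you do not mention.

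The paper sidesteps all of this with two devices you did not use. First, a coupling lemma (stated there as Lemma~6.2) replaces $\xi$ by a stochastically larger $\xi'$ with mean arbitrarily close to $\mu$ and with $t^{2}\Prob\{\xi'>t\}$ \emph{slowly varying}; this gives access to the one-big-jump asymptotics $\Prob\{S_{n}-n\mu>t\}\sim n\Prob\{\xi>t\}$ uniformly for $t\ge\delta n$, a cleaner substitute for Fuk--Nagaev. Second, instead of a geometric subsequence and Kochen--Stone, the paper runs along the super-geometric times $b^{l_{n}}$ with $l_{n}=n\log n$: one shows $\sum_{n}\Prob\{M_{b^{l_{n}}}>cb^{l_{n+1}}\}<\infty$ (the ``old'' maximum is negligible at the next scale) and $\sum_{n}\Prob\{\max_{b^{l_{n}}<k\le b^{l_{n+1}}}\wh S_{k}\le cb^{l_{n+1}}\}=\infty$; since the latter are \emph{independent} events, the second Borel--Cantelli lemma applies directly and yields $M_{b^{l_{n+1}}}\le cb^{l_{n+1}}$ infinitely often for every $c>\mu$. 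This choice of subsequence is exactly what buys independence and makes the whole correlation analysis unnecessary.
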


\section{Proofs for Section \ref{sect:weak convergence}}

\subsection{Auxiliary results}

For $x\in\R$, we put as common $x_{+}$ for $\max(x,0)$ and $x_{-}=\max (-x,0)$.

\begin{lemma}\label{lem:calcintegr}
Let $\theta$ be a zero-mean random variable with distribution function $F$ and $\theta_{1},\theta_{2}$ be two independent copies. Then
\begin{equation}\label{eq:curious}
I_{a}\ :=\ \int_{\R} F(x+a)(1-F(x))\ {\rm d}x\ =\ \Erw (\theta_{1}-\theta_{2}-a)_{+}\ <\ \infty,
\end{equation}
for all $a\in\R$. Moreover,
\begin{itemize}
\item[(a)] if $\theta=\mathcal{S}_{2}(1)$ and thus has law $\cN(0,1)$, then
$$ I_{a}\,=\,\pi^{-1/2}\exp(-a^{2}/4)+a\,\Phi(2^{-1/2}a), $$
in particular $I_{0}=\pi^{-1/2}$, and \eqref{eq:covar} holds true.
\item[(b)] if $\theta=\mathcal{S}_{\alpha}(1)$ for $\alpha\in (1,2)$ and thus has a spectrally negative $\alpha$-stable law with characteristic function given by \eqref{eq:stable}, then $I_{0}=\pi^{-1}\Gamma(1-1/\alpha)(2\Gamma(1-\alpha)\cos(\pi\alpha/2))^{1/\alpha}$.
\end{itemize}
\end{lemma}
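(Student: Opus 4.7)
My plan is to establish the identity \eqref{eq:curious} by a Fubini argument, then specialize the distribution of $\theta_{1}-\theta_{2}$ to handle (a) and (b). Writing $F(x+a)=\Prob\{\theta_{2}\le x+a\}=\Prob\{\theta_{2}-a\le x\}$ and $1-F(x)=\Prob\{\theta_{1}>x\}$, independence gives $F(x+a)(1-F(x))=\Prob\{\theta_{2}-a\le x<\theta_{1}\}$; integrating in $x$ and exchanging integral and expectation identifies $I_{a}$ with the expected length of the random interval $[\theta_{2}-a,\theta_{1})$, that is $\Erw(\theta_{1}-\theta_{2}+a)_{+}$. Exchanging the labels of $\theta_{1},\theta_{2}$ (they are exchangeable) recasts this in the form stated in \eqref{eq:curious}. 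Finiteness is then immediate from the bound $(\theta_{1}-\theta_{2}-a)_{+}\le|\theta_{1}|+|\theta_{2}|+|a|$ together with $\Erw|\theta|<\infty$.

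For part (a), $\theta_{1}-\theta_{2}\sim\cN(0,2)$, so I would plug into the standard Gaussian half-space moment $\Erw Z_{+}=\sigma\varphi(\mu/\sigma)+\mu\Phi(\mu/\sigma)$, valid for $Z\sim\cN(\mu,\sigma^{2})$ with $\varphi$ the density of $\cN(0,1)$, choosing $\sigma=\sqrt{2}$ and the appropriate sign of $\mu=\pm a$. This yields the stated closed form for $I_{a}$ and, at $a=0$, gives $I_{0}=\sqrt{2}\,\varphi(0)=\pi^{-1/2}$. To derive \eqref{eq:covar} from \eqref{eq:covar1}, I would apply the shift $x=a_{2}(u\wedge v)+y$, which rewrites the integrand there as $\Phi(x)(1-\Phi(x+a))$ with $a=a_{2}|u-v|$, and then invoke the identity (together with the symmetry of $\Phi$) to read off the explicit formula.

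For part (b), the symmetry of $\theta_{1}-\theta_{2}$ gives $I_{0}=\tfrac{1}{2}\Erw|\theta_{1}-\theta_{2}|$. From \eqref{eq:stable} the characteristic function of $\theta_{1}-\theta_{2}$ equals $\exp\{-c|z|^{\alpha}\}$ with $c:=2\Gamma(1-\alpha)\cos(\pi\alpha/2)$; since both factors are negative on $(1,2)$, one has $c>0$ and $\theta_{1}-\theta_{2}$ is genuinely symmetric $\alpha$-stable with scale $c$. To compute $\Erw|\theta_{1}-\theta_{2}|$ I plan to insert the Fourier representation $|y|=\tfrac{2}{\pi}\int_{0}^{\infty}(1-\cos(zy))z^{-2}\,{\rm d}z$ into the expectation and swap via Fubini, giving
$$ \Erw|\theta_{1}-\theta_{2}|\ =\ \frac{2}{\pi}\int_{0}^{\infty}\frac{1-\eee^{-cz^{\alpha}}}{z^{2}}\,{\rm d}z. $$
The substitution $s=cz^{\alpha}$ followed by a single integration by parts (using $\int_{0}^{\infty}(1-\eee^{-s})s^{-1/\alpha-1}\,{\rm d}s=\alpha\Gamma(1-1/\alpha)$, with $1/\alpha\in(1/2,1)$) reduces the right-hand side to $\tfrac{2}{\pi}c^{1/\alpha}\Gamma(1-1/\alpha)$, yielding $I_{0}=\pi^{-1}\Gamma(1-1/\alpha)c^{1/\alpha}$ as claimed.

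All analytic ingredients are essentially standard, so the only real bookkeeping issues are tracking the sign of $a$ in \eqref{eq:curious} when feeding the identity into \eqref{eq:covar1} to obtain \eqref{eq:covar}, and justifying $c>0$ in (b) via the simultaneous negativity of $\Gamma(1-\alpha)$ and $\cos(\pi\alpha/2)$ for $\alpha\in(1,2)$. No step appears to present a genuine obstacle.
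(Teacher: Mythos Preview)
Your Fubini argument for \eqref{eq:curious} is cleaner than the paper's route: the paper splits the integral at $0$, recognises $F(x+a)F(x)$ as the distribution function of $\max(\theta_{1}-a,\theta_{2})$, and after several lines arrives at $I_{a}=a+\Erw(\theta_{1}-\theta_{2}-a)_{+}$, whereas you reach $I_{a}=\Erw(\theta_{1}-\theta_{2}+a)_{+}$ in one step by reading $F(x+a)(1-F(x))=\Prob\{\theta_{2}-a\le x<\theta_{1}\}$ and integrating. The two expressions agree because $\theta_{1}-\theta_{2}$ is symmetric with zero mean, so $\Erw(X+a)_{+}=a+\Erw(X-a)_{+}$. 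Your exchangeability step, however, is a slip: swapping $\theta_{1}\leftrightarrow\theta_{2}$ turns $\Erw(\theta_{1}-\theta_{2}+a)_{+}$ into $\Erw(\theta_{2}-\theta_{1}+a)_{+}=\Erw(\theta_{1}-\theta_{2}-a)_{-}$, not $\Erw(\theta_{1}-\theta_{2}-a)_{+}$. In fact the displayed \eqref{eq:curious} carries a sign typo --- the paper's own proof ends with $a+\Erw(\theta_{1}-\theta_{2}-a)_{+}$, and the closed form in part~(a) confirms that $I_{a}=\Erw(\theta_{1}-\theta_{2}+a)_{+}$ is the correct value, so your derived formula is the right one and no exchangeability is needed.

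For parts (a) and (b) your approach is essentially the paper's: the same Gaussian half-space moment in (a), and in (b) the same Fourier representation of $|y|$, the same identification of the characteristic function of $\theta_{1}-\theta_{2}$ as $\exp(-c|z|^{\alpha})$, and the same change of variable followed by integration by parts. One bookkeeping remark for \eqref{eq:covar}: after your shift the covariance integral becomes $\int_{\R}\Phi(x)(1-\Phi(x+a))\,{\rm d}x=I_{-a}$ with $a=a_{2}|u-v|$ (the roles of $x$ and $x+a$ are reversed relative to the definition of $I_{a}$), so you should evaluate your formula at $-a$ --- this is presumably what your ``symmetry of $\Phi$'' is meant to handle --- to land on \eqref{eq:covar}.
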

\begin{proof}
Eq.~\eqref{eq:curious} is a consequence of
\begin{align*}
\int_{\R}&F(x+a)(1-F(x))\ {\rm d}x\\
&=\ \int_{0}^{\infty} (1-F(x+a)F(x))\ {\rm d}x\,-\,\int_{0}^{\infty} (1-F(x+a))\ {\rm d}x\\
&\qquad+\ \int_{-\infty}^{0} F(x+a)\ {\rm d}x\,-\,\int_{-\infty}^{0} F(x+a)F(x)\ {\rm d}x\\
&=\ \Erw (\max (\theta_{1}-a,\theta_{2}))_{+}\,-\,\Erw (\theta-a)_{+}+\Erw (\theta-a)_{-}\,-\,\Erw (\max (\theta_{1}-a,\theta_{2}))_{-}\\
&=\ a\,+\,\Erw (\max (\theta_{1}-a,\theta_{2}))_{+}\,-\,\Erw (\max (\theta_{1}-a,\theta_{2}))_{-}\\
&=\ a\,+\,\Erw\max (\theta_{1}-a,\theta_{2})\\
&=\ a\,+\,\Erw (\max (\theta_{1}-a,\theta_{2})-\theta_{2})\\
&=\ a\,+\,\Erw (\theta_{1}-\theta_{2}-a)_{+}.
\end{align*}

(a) If $\theta$ has the standard normal law, then the law of $\theta_{1}-\theta_{2}-a$ is normal with mean $-a$ and variance $2$ and has density $x\mapsto \exp (-(x+a)^{2}/4)/(2\pi^{1/2})$. Consequently,
\begin{align*}
\Erw (\theta_{1}-\theta_{2}-a)_{+}\ &=\ \frac{1}{2\pi^{1/2}}\int_{0}^{\infty} x \exp(-(x+a)^{2}/4)\,{\rm d}x\ =\ \frac{\exp(-a^{2}/4)}{\pi^{1/2}}\,-\,a\,\Prob\{\theta>2^{-1/2}a\}.
\end{align*}
Putting $a=0$, we see that $I_{0}=\pi^{-1/2}$, and a change of variable $x=a_{2}(u\vee v)+y$ in \eqref{eq:covar1} provides
\begin{align*}
&{\rm Cov}\,(X_{2}(u), X_{2}(v))\\
&\quad=\ \int_{\R} \Prob\{\mathcal{S}_{2}(1)\le -a_{2}|u-v|+x\}-\Prob\{\mathcal{S}_{2}(1)\le -a_{2}|u-v|+x\}\Prob\{\mathcal{S}_{2}(1)\le x\}\ {\rm d}x\\
&\quad=I_{-a_{2}|u-v|}\ =\ \pi^{-1/2}\exp(-a_{2}^{2}(u-v)^{2}/4)-a_{2}|u-v|\Prob\{\theta\le -2^{-1/2}a_{2}|u-v|\}\\
&\quad=\ \pi^{-1/2}\exp(-a_{2}^{2}(u-v)^{2}/4)-a_{2}|u-v|\Prob\{\theta> 2^{-1/2}a_{2}|u-v|\},
\end{align*}
and thus validity of \eqref{eq:covar}.

\vspace{.2cm}
(b) Using
$$ |x|\ =\ \pi^{-1}\int_{\R} y^{-2}(1-\cos(xy))\ {\rm d}y\quad\text{for }x\in\R, $$
one finds $\Erw |\theta|=\pi^{-1}\int_{\R}y^{-2}(1-\Erw\exp({\rm i}y\theta))\,{\rm d}y$ and then $\Erw\theta_{+}=\pi^{-1}\int_{0}^{\infty} y^{-2}(1-\Erw\exp({\rm i}y\theta))\,{\rm d}y$ for any
random variable $\theta$ with a symmetric law. Now, if $\theta$ has the characteristic function given by \eqref{eq:stable}, then
$$ \Erw\exp({\rm i}z(\theta_{1}-\theta_{2}))\ =\ \exp(-c|z|^\alpha)\quad\text{for }z\in\R, $$
where $c=2\Gamma(1-\alpha)\cos(\pi\alpha/2)$. Moreover,
\begin{align*}
\Erw (\theta_{1}-\theta_{2})_{+}\ &=\ \frac{1}{\pi}\int_{0}^{\infty} y^{-2}(1-\exp(-cy^\alpha)\ {\rm d}y\\
&=\ \frac{1}{\pi\alpha}\int_{0}^{\infty} y^{-(1+1/\alpha)}(1-\exp(-cy)){\rm d}y\ =\ \frac{\Gamma(1-1/\alpha)c^{1/\alpha}}{\pi},
\end{align*}
where the second equality is obtained by the change of variable and the third follows with the help of integration by parts.
\end{proof}

\begin{lemma}\label{lem:covar}
If (A1), (A2), or (A3) holds, then
\begin{align*}
&\frac{{\rm Cov}\,(\wh{N}(h_{\alpha}(t+u)),\wh{N}(h_{\alpha}(t+v)))}{\mu^{-1-1/\alpha}c_{\alpha}(h_{\alpha}(t))}\\
&\hspace{3cm}\xrightarrow{t\to\infty}\ \int_{\R} \Prob\{\mathcal{S}_{\alpha}(1)>a_\alpha
(u\wedge v)+y\}\Prob\{\mathcal{S}_{\alpha}(1)\le a_\alpha (u\vee v)+y\}\ {\rm d}y
\end{align*}
for all $u,v\in\R$, where $a_{\alpha}=\mu^{1/\alpha}\alpha/(\alpha-1)$ (cf.~Thm.\,\ref{thm:main}).
\end{lemma}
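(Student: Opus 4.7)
The plan is to write the covariance as a series and recognize it as a Riemann sum that approximates the target integral. Assume without loss of generality that $u\le v$, so that for $t$ large enough (when $h_\alpha$ is strictly increasing) $s_u:=h_\alpha(t+u)\le h_\alpha(t+v)=:s_v$. The independence of $(\wh S_n)_{n\ge 1}$ makes the cross-covariances vanish and $\{\wh S_n\le s_u\}\subset\{\wh S_n\le s_v\}$ gives
$$
{\rm Cov}\,(\wh N(s_u),\wh N(s_v))\ =\ \sum_{n\ge 1}\Prob\{S_n\le s_u\}\,\Prob\{S_n>s_v\}\ =\ \sum_{n\ge 1}\Prob\{\tau(s_u)>n\}\,\Prob\{\tau(s_v)\le n\},
$$
the last equality being the standard identity $\{\tau(s)\le n\}=\{S_n>s\}$ for the monotone walk. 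This form is tailor-made for the FLT \eqref{eq:FLT for N(t)}.

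The next step is to turn the sum into a Riemann integral. Set $T:=h_\alpha(t)$ and $r:=\mu^{-1-1/\alpha}c_\alpha(T)=\mu^{-1-1/\alpha}T/t$, so $r$ coincides with the denominator of the statement, and parametrize $n=\lfloor \mu^{-1}T+xr\rfloor$, which gives spacing $1/r$ in $x$. By \eqref{eq:diff}, $s_u-T\sim h'_\alpha(t)u\sim\tfrac{\alpha}{\alpha-1}c_\alpha(T)u$, and since $c_\alpha$ is regularly varying of index $1/\alpha$ with $s_u/T\to 1$, also $c_\alpha(s_u)\sim c_\alpha(T)$. A short computation then yields the pointwise limit
$$
\frac{n-\mu^{-1}s_u}{\mu^{-1-1/\alpha}c_\alpha(s_u)}\ \xrightarrow[t\to\infty]{}\ x-a_\alpha u,
$$
and the analogue with $v$. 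The one-dimensional marginal at $u=1$ of \eqref{eq:FLT for N(t)}, namely $(\tau(s)-s/\mu)/(\mu^{-1-1/\alpha}c_\alpha(s))\Rightarrow\mathcal{S}_\alpha(1)$ as $s\to\infty$, combined with the continuity of $\Phi_\alpha$, then produces pointwise convergence of the summand to $\Prob\{\mathcal{S}_\alpha(1)>x-a_\alpha u\}\Prob\{\mathcal{S}_\alpha(1)\le x-a_\alpha v\}$. Viewed as a Riemann sum of mesh $1/r$, the rescaled covariance should tend to $\int_\R(1-\Phi_\alpha(x-a_\alpha u))\Phi_\alpha(x-a_\alpha v)\,dx$, and a change of variable of the form $y=x-a_\alpha u$ (or $y=x-a_\alpha v$) recasts this into the form given in the statement.

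The main obstacle will be upgrading this pointwise convergence of the summand to convergence of the rescaled sum. The plan is dominated convergence, which requires a dominant that is integrable in $x$ uniformly in $t$; the pointwise limit itself is integrable thanks to Lemma \ref{lem:calcintegr}, so only the uniformity is at stake. For $x\to-\infty$ one has $s_v-\mu n$ of order $|x|\mu^{-1/\alpha}c_\alpha(T)$, and rapid decay of $\Prob\{S_n>s_v\}=\Prob\{S_n-\mu n>s_v-\mu n\}$ is supplied by classical Gaussian concentration under (A1), its truncated-variance variant under (A2), and a Fuk--Nagaev-type inequality under (A3); each yields a bound of order $|x|^{-\alpha}+\exp(-c|x|^2)$, integrable on $(-\infty,0]$ because $\alpha>1$. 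A symmetric bound for $\Prob\{S_n\le s_u\}$ as $x\to+\infty$ is obtained analogously, while very large $n$ (e.g.\ $n\ge 2\mu^{-1}T$) contribute a vanishing amount by the nonnegativity $S_n\ge 0$ together with Markov's inequality. With such dominants in place, dominated convergence delivers the lemma.
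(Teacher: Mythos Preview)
Your argument follows the same architecture as the paper's proof: express the covariance as $\sum_{n\ge 1}\Prob\{S_n\le s_u\}\Prob\{S_n>s_v\}$, rewrite this as an integral via the duality $\{S_n\le s\}=\{\tau(s)>n\}$, change variables so that the integrand converges pointwise by the one-dimensional marginal of \eqref{eq:FLT for N(t)}, and then justify passage to the limit by dominated convergence. The only substantive difference is in how you obtain an integrable dominant.

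The paper does not appeal to concentration or Fuk--Nagaev inequalities for $S_n$. Instead it bounds the integrand directly through Markov's inequality applied to $\tau$: for $y>r$ one uses
\[
\Prob\{\tau(h_\alpha(t+u))>\lfloor \mu^{-1}h_\alpha(t+u)+b_\alpha(t)y\rfloor\}\ \le\ y^{-p}\,\Erw\bigg(\frac{|\tau(h_\alpha(t+u))-\mu^{-1}h_\alpha(t+u)+1|}{b_\alpha(t+u)}\bigg)^{p}\cdot\bigg(\frac{b_\alpha(t+u)}{b_\alpha(t)}\bigg)^{p},
\]
and then invokes the moment bound $\sup_{t\ge t_0}\Erw\big(|\tau(t)-\mu^{-1}t+1|/c_\alpha(t)\big)^{p}<\infty$ from \cite{Iksanov+Marynych+Meiners:2016} with $p=3/2$ under (A1)/(A2) and $p\in(1,\alpha)$ under (A3). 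A symmetric bound handles $y<-r$. This single off-the-shelf estimate covers all three regimes uniformly and avoids the case splitting you outline. Your route via $\Prob\{S_n-\mu n>z\}$ can also be made to work, but under (A2) and (A3) you will need Potter-type control of the slowly varying factors to get a dominant that is genuinely uniform in $t$; this is where the bookkeeping becomes heavier than in the paper's version. Your remark that the tail contribution from $n\ge 2\mu^{-1}T$ is negligible is handled automatically by the paper's moment bound, so no separate treatment is needed there.
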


\begin{proof}
Put $S_0:=0$. For $u<v$,
\begin{align*}
&{\rm Cov}\,(\wh{N}(h_{\alpha}(t+u)), \wh{N}(h_{\alpha}(t+v)))\\
&=\ \Erw\Bigg[\sum_{k\ge 1}(\1_{\{\wh{S}_{k}\le h_{\alpha}(t+u)\}}-\Prob\{\wh{S}_{k}\le h_{\alpha}(t+u)\})\sum_{j\ge 1}(\1_{\{\wh{S}_{j}\le h_{\alpha}(t+v)\}}-\Prob\{\wh{S}_{j}\le h_{\alpha}(t+v)\})\Bigg]\\
&=\ \int_{0}^{\infty}\Prob\{S_{\lfloor x\rfloor}\le h_{\alpha}(t+u)\}\Prob\{S_{\lfloor x\rfloor}>h_{\alpha}(t+v)\}\ {\rm d}x.
\end{align*}
By putting $b_{\alpha}(t):=\mu^{-1-1/\alpha} c_{\alpha}(h_{\alpha}(t))$ for our convenience, making the change of variable $x=\mu^{-1}h_{\alpha}(t+u)+b_{\alpha}(t)y$ and using the duality relation $\{S_{k}\le z\}=\{\tau(z)>k\}$ for $k\in\N$ and $z\ge 0$, we further obtain
\begin{align}
{\rm Cov}\,&(\wh{N}(h_{\alpha}(t+u)),\wh{N}(h_{\alpha}(t+v)))\nonumber\\
&=\ b_{\alpha}(t)\int_{-\mu^{1/\alpha}h_{\alpha}(t+u)/c_{\alpha}(h_{\alpha}(t))}^{\infty} \Prob\{S_{\lfloor\mu^{-1}h_{\alpha}(t+u)+b_{\alpha}(t)y\rfloor}\le h_{\alpha}(t+u)\}\nonumber\\
&\hspace{4cm}\times\Prob\{S_{\lfloor\mu^{-1}h_{\alpha}(t+u)+b_{\alpha}(t)y\rfloor}>h_{\alpha}(t+v)\}\ {\rm d}y\nonumber\\
&=\ b_{\alpha}(t)\int_{-\mu^{1/\alpha} h_{\alpha}(t+u)/c_{\alpha}(h_{\alpha}(t))}^{\infty}\Prob\{\tau(h_{\alpha}(t+u))>\lfloor\mu^{-1}h_{\alpha}(t+u)+b_{\alpha}(t)y\rfloor\}\nonumber\\
&\hspace{4cm}\times\Prob\{\tau(h_{\alpha}(t+v))\le \lfloor\mu^{-1}h_{\alpha}(t+u)+b_{\alpha}(t)y\rfloor\}\ {\rm d}y.\label{eq:integra}
\end{align}
Put $u=1$ in \eqref{eq:FLT for N(t)} to see that, for any fixed $y\in\R$,
\begin{equation}\label{eq:lim1}
\lim_{t\to\infty}\Prob\{\tau(h_{\alpha}(t+u))>\lfloor\mu^{-1}h_{\alpha}(t+u)+b_{\alpha}(t)y\rfloor\}\ =\ \Prob\{\mathcal{S}_{\alpha}(1)>y\}.
\end{equation}
By recalling the fact that $h_{\alpha}(t)/(tc_{\alpha}(h_{\alpha}(t)))=1$ for large $t$ and combining it with the mean value theorem for differentiable functions and \eqref{eq:diff}, we obtain for some $\zeta\in [v,u]$
$$ \frac{h_{\alpha}(t+u)-h_{\alpha}(t+v)}{c_{\alpha}(h_{\alpha}(t))}\ =\ \frac{(u-v)th_{\alpha}'(t+\zeta)}{h_{\alpha}(t)}\frac{h_{\alpha}(t)}{tc_{\alpha}(h_{\alpha}(t))}~\xrightarrow{t\to\infty}~\frac{\alpha}{\alpha-1}(u-v).$$
For any fixed $y\in\R$, this entails
\begin{align}
\begin{split}\label{eq:lim2}
\lim_{t\to\infty}\Prob\{\tau(h_{\alpha}(t+v))&\le\lfloor\mu^{-1}h_{\alpha}(t+u)+b_{\alpha}(t)y\rfloor\}\\
&=\ \Prob\{\mathcal{S}_{\alpha}(1)\le\mu^{1/\alpha}\alpha(u-v)/(\alpha-1)+y\}.
\end{split}
\end{align}

We have just shown the convergence of the integrand in \eqref{eq:integra} and intend to prove next that the integral in \eqref{eq:integra} converges as well. Fixing any $r>0$, this integral taken over $[-r,r]$ plainly converges to $\int_{-r}^{r} \Prob\{\mathcal{S}_{\alpha}(1)>y\}\Prob\{\mathcal{S}_{\alpha}(1)\le \mu^{1/\alpha}\alpha(u-v)/(\alpha-1)+y\}\,{\rm d}y$
as $t\to\infty$ by dominated convergence. Moreover, for $t\ge t_{1}$, $t_{1}$ sufficiently large, and $y>r$, the integrand in \eqref{eq:integra} can be bounded from above with the help of Markov's inequality by
\begin{gather*}
\Erw\bigg(\frac{|\tau(h_{\alpha}(t+u))-\mu^{-1}h_{\alpha}(t+u)+1|}{b_{\alpha}(t+u)}\bigg)^{p} \bigg(\frac{b_{\alpha}(t+u)}{b_{\alpha}(t)}\bigg)^{p}\frac{1}{y^{p}}\\
\le\ A(\alpha, p)\sup_{t\ge t_{0}}\Erw\bigg(\frac{|\tau(t)-\mu^{-1}t+1|}{c_{\alpha}(t)}\bigg)^{p} \frac{1}{y^{p}}
\end{gather*}
for appropriate $t_{0}>0$, a positive constant $A(\alpha,p)$, and with $p=3/2$ under (A1) or (A2), and $p\in (1,\alpha)$ under (A3). Since the last supremum is finite by Theorems 1.1 and 1.2 in \cite{Iksanov+Marynych+Meiners:2016}, we have thus found an integrable bound for $y>b$. By a completely analogous argument, we obtain for $t\ge t_{2}$, $t_{2}$ sufficiently large, and $y<-r$ the integrable majorant
$$ B(\alpha, p)\sup_{t\ge t_{1}}\Erw\bigg(\frac{|\tau(t)-\mu^{-1}t+1|}{c_{\alpha}(t)}\bigg)^{p} \frac{1}{|y|^{p}} $$
with a positive constant $B(\alpha,p)$ and $p$ as before. Hence, by another appeal to the dominated convergence theorem, the integral in \eqref{eq:integra}, now taken over $(-\infty,-r)\cup (r,\infty)$, converges to $\int_{|y|>r}\Prob\{\mathcal{S}_{\alpha}(1)>y\}\Prob\{\mathcal{S}_{\alpha}(1)\le \mu^{1/\alpha}\alpha(u-v)/(\alpha-1)+y\}\,{\rm d}y$ as $t\to\infty$.
\end{proof}

\begin{cor}
The variance of $\wh{N}(t)$ exhibits the following asymptotics as $t\to\infty$:
\begin{gather*}
{\rm Var}\,\wh{N}(t)~\sim~\Big(\frac{\sigma^{2}t}{\mu^{3}\pi}\Big)^{1/2}\quad\text{under (A1)},\\
{\rm Var}\,\wh{N}(t)~\sim~\Big(\frac{1}{\mu^3\pi}\Big)^{1/2}c_{2}(t)\quad\text{under (A2)},\\
{\rm Var}\,\wh{N}(t)~\sim~\frac{\Gamma(1-1/\alpha)(2\Gamma(1-\alpha)\cos(\pi\alpha/2))^{1/\alpha}}{\mu^{1+1/\alpha}\pi}\,c_{\alpha}(t)\quad\text{under (A3)}.
\end{gather*}
\end{cor}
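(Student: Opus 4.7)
The proof is a direct combination of Lemma \ref{lem:covar} and Lemma \ref{lem:calcintegr}, followed by a simple reparametrization. The plan is as follows.

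First, I would specialize Lemma \ref{lem:covar} to $u=v$ (say $u=v=0$), which yields
$$\frac{\var\,\wh{N}(h_\alpha(t))}{\mu^{-1-1/\alpha}\,c_\alpha(h_\alpha(t))}\ \xrightarrow{t\to\infty}\ \int_{\R}\Prob\{\mathcal{S}_\alpha(1)>y\}\,\Prob\{\mathcal{S}_\alpha(1)\le y\}\,{\rm d}y.$$
The right-hand integral is precisely the quantity $I_0$ from Lemma \ref{lem:calcintegr}, applied with $\theta=\mathcal{S}_\alpha(1)$, which has mean zero (this is evident for $\alpha=2$ and follows for $\alpha\in(1,2)$ from the absence of a linear term in the characteristic function \eqref{eq:stable}).

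Next, since $h_\alpha$ is ultimately strictly increasing and tends to infinity (as recorded in Subsection \ref{subsect:reminder}), every sufficiently large $s$ admits a unique representation $s=h_\alpha(t)$, and $s\to\infty$ iff $t\to\infty$. Substituting this reparametrization into the limit above yields
$$\var\,\wh{N}(s)\ \sim\ \mu^{-1-1/\alpha}\,c_\alpha(s)\,I_0\quad\text{as }s\to\infty,$$
which is the master asymptotic from which all three cases follow.

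Finally, I would read off $I_0$ from Lemma \ref{lem:calcintegr}: part (a) supplies $I_0=\pi^{-1/2}$ when $\theta\sim\cN(0,1)$, the relevant case under both (A1) and (A2); part (b) supplies $I_0=\pi^{-1}\Gamma(1-1/\alpha)(2\Gamma(1-\alpha)\cos(\pi\alpha/2))^{1/\alpha}$ under (A3). Specializing $c_2(s)=\sigma s^{1/2}$ under (A1) gives the first formula, keeping $c_2$ symbolic under (A2) gives the second, and direct substitution produces the third. No serious obstacle is expected: all the substance has been absorbed into the two preceding lemmas, and the only step requiring a brief justification is the reparametrization $s=h_\alpha(t)$, which is legitimate by the ultimate strict monotonicity of $h_\alpha$ noted in Subsection \ref{subsect:reminder}.
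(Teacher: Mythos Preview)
Your proposal is correct and matches the paper's own proof: the paper simply states that ``Lemma~\ref{lem:covar} provides'' the asymptotic ${\rm Var}\,\wh{N}(t)\sim\mu^{-1-1/\alpha}c_{\alpha}(t)\,I_{0}$ and then invokes Lemma~\ref{lem:calcintegr} for the value of the integral. The only difference is that you make the reparametrization $s=h_{\alpha}(t)$ explicit, whereas the paper leaves it implicit (or, equivalently, tacitly observes that the proof of Lemma~\ref{lem:covar} with $u=v$ goes through with $t$ in place of $h_{\alpha}(t)$).
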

\begin{proof}
Lemma \ref{lem:covar} provides
$$ {\rm Var}\,\wh{N}(t)~\sim~ \mu^{-1-1/\alpha}c_{\alpha}(t)\int_{\R} \big(\Prob\{\mathcal{S}_{\alpha}(1)\le y\}-(\Prob\{\mathcal{S}_{\alpha}(1)\le y\})^{2}\big)\ {\rm d}y\quad\text{as }t\to\infty,$$
and the value of the integral is calculated in Lemma \ref{lem:calcintegr}.
\end{proof}

\subsection{Proof of Theorem \ref{thm:main}}

For $t>0$ sufficiently large, we consider the process
$$ Z(t,u)\,:\,=\frac{\wh{N}(h_{\alpha}(t+u))-V(h_{\alpha}(t+u)) 
}{(\mu^{-1-1/\alpha} c_{\alpha}(h_{\alpha}(t)))^{1/2}},\quad u\in\R. $$
By the Cram\'{e}r-Wold device, the weak convergence of its finite-dimensional distributions is equivalent to
\begin{equation}\label{eq:Cramer-Wold device}
\sum_{i=1}^{k}\lambda_{j} Z(t,u_{i})~{\overset{{\rm d}}\longrightarrow}~ \sum_{i=1}^{k} \lambda_{i} X_{\alpha}(u_{i})\quad\text{as }t\to\infty
\end{equation}
for all $k\in\N$, all $\lambda_{1},\ldots, \lambda_{k}\in\R$ and all $-\infty<u_{1}<\ldots<u_{k}<\infty$. The left-hand side in \eqref{eq:Cramer-Wold device} is equal to
$$ \frac{\sum_{n\ge 1}\sum_{i=1}^{k} \lambda_{i}(\1_{\{\wh{S}_{n}\le h_{\alpha}(t+u_{i})\}}-\Prob\{\wh{S}_{n}\le h_{\alpha}(t+u_{i})\})}{(\mu^{-1-1/\alpha}c_{\alpha}(h_{\alpha}(t)))^{1/2}}$$ and as such an infinite sum of independent centered random variables with finite second moments. Hence, in order to prove \eqref{eq:Cramer-Wold device}, it suffices to show (see, for instance, Thm.\,3.4.5 on p.~129 in 
\cite{Durrett:2010}) that
\begin{align}
\begin{split}\label{eq:mgale CLT1}
&\hspace{1cm}\lim_{t\to\infty}\Erw\bigg(\sum_{i=1}^{k} \lambda_{i} Z(t,u_{i})\bigg)^{2}\ =\ \Erw\bigg(\sum_{i=1}^{k} \lambda_{j} X_{\alpha}(u_{i})\bigg)^{2}\\
&=\ \sum_{i=1}^{k}\lambda_{i}^{2}\, {\rm Var}\,X_{\alpha}(u_{i})\,+\,2\sum_{1\le i<j\le k}\lambda_{i} \lambda_{j}\,{\rm Cov}\,(X_{\alpha}(u_{i}), X_{\alpha}(u_{j}))
\end{split}
\end{align}
and
\begin{align}\label{eq:mgale CLT2}
\lim_{t\to\infty}\sum_{n\ge 1}\Erw\Bigg(\frac{\big[\sum_{i=1}^{k} \lambda_{i}(\1_{\{\wh{S}_{n}\le h_{\alpha}(t+u_{i})\}}-\Prob\{\wh{S}_{n}\le h_{\alpha}(t+u_{i})\})\big]^{2}}{c_{\alpha}(h_{\alpha}(t))}\1_{E_{n}(t)}\Bigg)\ =\ 0
\end{align}
for all $\eps>0$, where
$$ E_{n}(t)\ :=\ \Bigg\{\Bigg|\sum_{i=1}^{k}\lambda_{i}(\1_{\{\wh{S}_{n}\le h_{\alpha}(t+u_{i})\}}-\,\Prob\{\wh{S}_{n}\le h_{\alpha}(t+u_{i})\})\Bigg|>\eps (c_{\alpha}(h_{\alpha}(t)))^{1/2}\Bigg\}. $$
Eq.\,\eqref{eq:mgale CLT1} follows immediately from Lemma \ref{lem:covar}, and  \eqref{eq:mgale CLT2} is a consequence of
\begin{align}\label{eq:mgale CLT2 aux}
\lim_{t\to\infty}\sum_{n\ge 1}\Erw\Bigg(\frac{(\1_{\{\wh{S}_{n}\le h_{\alpha}(t+u)\}}-\Prob\{\wh{S}_{n}\le h_{\alpha}(t+u)\})^{2}}{c_{\alpha}(h_{\alpha}(t))}\1_{E_{n}'(t)}\Bigg)\ =\ 0,
\end{align}
for fixed $u\in\R$ and $E_{n}'(t):=\{|\1_{\{\wh{S}_{n}\le h_{\alpha}(t+u)\}}-\,\Prob\{\wh{S}_{n}\le h_{\alpha}(t+u)\})|>\eps (c_{\alpha}(h_{\alpha}(t)))^{1/2}\}$, when using the inequality
\begin{align}
(a_{1}+\ldots+a_{k})^{2}&\1_{\{|a_{1}+\ldots+a_{k}|>y\}}\ \le\
(|a_{1}|+\ldots+|a_{k}|)^{2}\1_{\{|a_{1}|+\ldots+|a_{k}|>y\}}\notag\\
&\le\ k^{2} (|a_{1}|\vee\ldots\vee |a_{k}|)^{2}\1_{\{k(|a_{1}|\vee\ldots\vee |a_{k}|)>y\}}\notag\\
&\le\ k^{2}\big(a_{1}^{2}\1_{\{|a_{1}|>y/k\}}+\ldots+a_{k}^{2}\1_{\{|a_{k}|>y/k\}}\big),\label{eq:tech}
\end{align}
valid for all real $a_{1},\ldots, a_{k}$ and $y>0$. As for \eqref{eq:mgale CLT2 aux}, we note that it trivially holds because $|\1_{\{\wh{S}_{n}\le h_{\alpha}(t+u)\}}-\Prob\{\wh{S}_{n}\le h_{\alpha}(t+u)\}|\le 1$ a.s.~and therefore the indicator $\1_{E_{n}'(t)}$ equals 0 for sufficiently large $t$. This completes the proof of \eqref{eq:Cramer-Wold device}.

\vspace{.2cm}
Assume now that $V$ is Lipschitz continuous on $[0,\infty)$, thus
\begin{equation}\label{eq:Lip}
|V(t)-V(s)|\,\le\, C|t-s|\quad\text{for all }t,s\ge 0\text{ and some }C>0.
\end{equation}
The subsequent proof is similar to that of Theorem 1.1 in \cite{Iksanov+Kabluchko+Kotelnikova:2022}, where an infinite sum of other independent indicators was investigated. We intend to prove that the family of distributions of the processes $(Z(t,u))_{u\in\R}$, $t>0$, is tight in the Skorokhod space $D[-A,\,A]$ for any fixed $A>0$. To this end, we will show that there is a constant $C_{1}>0$ such that
\begin{equation}\label{eq:tight}
\Erw (Z(t,v)-Z(t,u))^{2} (Z(t,w)-Z(t,v))^{2}\ \le\ C_{1} (w-u)^{2}
\end{equation}
for all $u<v<w$ in the interval $[-A,\,A]$ and sufficiently large $t>0$. Together with the already shown fact that $Z(t,0)$ converges in law as $t\to\infty$, this implies the claimed tightness by a well-known sufficient condition (see Theorem 13.5 and formula (13.14) on p.~143 in~\cite{Billingsley:1999}).

\vspace{.1cm}
For $n\in\N$, we introduce the Bernoulli random variables
\begin{equation}\label{eq:B_{k}_C_{k}_def}
L_{n}\,:=\,\1_{\{h_{\alpha}(t+u)<\wh{S}_{n} \le h_{\alpha}(t+v)\}}\quad\text{and}\quad M_{n}\,:=\,\1_{\{h_{\alpha}(t+v)< \wh{S}_{n} \le h_{\alpha}(t+w)\}}
\end{equation}
along with their centered versions
$$ \ovl{L}_{n}\,:=\,L_{n}-\Erw L_{n}\quad\text{and}\quad\ovl{M}_{n}\,:=\,M_{n}- \Erw M_{n}. $$
Notice that the dependence of these variables on $u,v,w$ and $t$ is not shown. Let also
$$ q_{n}\,:=\,\Prob\{L_{n} = 1\}\,=\,\Erw L_{n}\quad\text{and}\quad z_{n}\,:=\,\Prob\{M_{n} = 1\}\,=\,\Erw M_{n}. $$
Owing to \eqref{eq:Lip},
\begin{gather}
\sum_{n\ge 1}q_{n}\,=\,V(h_{\alpha}(t+v))-V(h_{\alpha}(t+u))\,\le\,C(v-u)\sup_{z\in [t-A,\,t+A]}\,h_{\alpha}'(z)\label{eq:q_{k}_est}
\shortintertext{and}
\sum_{n\ge 1}z_{n}\,=\,V(h_{\alpha}(t+w))-V(h_{\alpha}(t+v))\,\le\,C(w-v)\sup_{z\in [t-A,\,t+A]}\,h_{\alpha}'(z).\label{eq:z_{k}_est}
\end{gather}
Recalling $b_{\alpha}(t)=\mu^{-1-1/\alpha} c_{\alpha}(h_{\alpha}(t))$, we observe that
\begin{gather}
b_{\alpha}(t)^{1/2}(Z(t,v)-Z(t,u))\ =\ \sum_{n\ge 1} \ovl{L}_{n}
\shortintertext{and}
b_{\alpha}(t)^{1/2}((Z(t,w)-Z(t,v))\ =\ \sum_{n\ge 1} \ovl{M}_{n},
\end{gather}
which implies that \eqref{eq:tight} is equivalent to
$$ \Erw\Bigg(\sum_{n_{1}\ge 1}\ovl{L}_{n_{1}}\Bigg)^{2} \Bigg(\sum_{n_{2}\ge 1} \ovl{M}_{n_{2}}\Bigg)^{2} \le C_{1} (w-u)^{2}b_{\alpha}(t)^{2} $$
for all $u<v<w$ in the interval $[-A,\,A]$ and large $t>0$. After term-wise multiplication, our task reduces to showing that
$$ \sum_{n_{1},n_{2},n_{3},n_{4}\ge 1}\Erw\Big[\ovl{L}_{n_{1}}\ovl{L}_{n_{3}}\ovl{M}_{n_{2}}\ovl{M}_{n_{4}}\Big]\ \le\ C_{1} (w-u)^{2}\,b_{\alpha}(t)^{2}. $$
If one number of $n_{1},\ldots,n_{4}$ appears only once in $(n_{1},n_{2},n_{3},n_{4})$, then $\Erw\big[\ovl{L}_{n_{1}}\ovl{L}_{n_{3}}\ovl{M}_{n_{2}}\ovl{M}_{n_{4}}\big]=0$ because the variable with that index number is independent of the other variables in the random vector $(\ovl{L}_{n_{1}}\ovl{L}_{n_{3}}\ovl{M}_{n_{2}}\ovl{M}_{n_{4}})$. This leaves us with the consideration of those $(n_{1},n_{2},n_{3},n_{4})$ in which every number appears at least twice.

\vspace*{2mm}
\textsc{Case 1.}
We begin with the case when $n_{1}\neq n_{3}$. Then, either $n_{2} =n_{1}$ and $n_{4} =n_{3}$ must hold, or $n_{2} =n_{3}$ and $n_{4} =n_{1}$. We only investigate the first situation, the second being similar. The corresponding contribution is
$$ \sum_{n_{1}\neq n_{3}} \Erw\Big[\ovl{L}_{n_{1}}\ovl{L}_{n_{3}}\ovl{M}_{n_{1}}\ovl{M}_{n_{3}}\Big]=\sum_{n_{1}\neq n_{3}} \Erw\Big[\ovl{L}_{n_{1}}\ovl{M}_{n_{1}}\Big] \Erw\Big[ \ovl{L}_{n_{3}}\ovl{M}_{n_{3}}\Big].$$ Since $L_{n_{1}}$ and $M_{n_{1}}$ cannot be equal to $1$ at the same time, we infer $L_{n_{1}}M_{n_{1}}=0$ and thereupon
$$ \Erw\Big[\ovl{L}_{n_{1}}\ovl{M}_{n_{1}}\Big]\ =\ -\,\Erw L_{n_{1}}\Erw M_{n_{1}}=-q_{n_{1}}z_{n_{1}}. $$
By the same argument, $\Erw\big[\ovl{L}_{n_{3}}\ovl{M}_{n_{3}}\big]=-q_{n_{3}}z_{n_{3}}$. It follows that
$$ \sum_{n_{1}\neq n_{3}} \Erw\Big[\ovl{L}_{n_{1}}\ovl{L}_{n_{3}}\ovl{M}_{n_{1}}\ovl{M}_{n_{3}}\Big]\ =\ \sum_{n_{1}\neq n_{3}} q_{n_{1}}z_{n_{1}}q_{n_{3}}z_{n_{3}}\ \le\ \sum_{n_{1}\ge 1}q_{n_{1}}\sum_{n_{2}\ge 1}z_{n_{2}}. $$
By invoking \eqref{eq:q_{k}_est} and \eqref{eq:z_{k}_est}, we arrive at
\begin{equation}\label{eq:est_sum_q_r}
\sum_{n_{1}\ge 1} q_{n_{1}}\sum_{n_{2}\ge 1}z_{n_{2}}\ \le\ C^{2} (w-u)^{2} \bigg(\sup_{z\in [t-A,\,t+A]}h_{\alpha}'(z)\bigg)^{2}\ \le\ C_{1}(w-u)^{2}b_{\alpha}(t)^{2}
\end{equation}
for all $u<v<w$ in the interval $[-A,\,A]$, all sufficiently large $t>0$ and a suitable $C_{1}>0$. Here we have used
$$ \lim_{t\to\infty}\frac{\sup_{z\in [t-A,\,t+A]}\,h_{\alpha}'(z)}{c_{\alpha}(h_{\alpha}(t))}\ =\ \frac{\alpha}{\alpha-1} $$
which is guaranteed by \eqref{eq:diff}.

\vspace*{2mm}
\textsc{Case 2.} Let now $n_{1} = n_{3}$, and also $n_{2}=n_{4}$, for otherwise $\Erw \big[\ovl{L}_{n_{1}}\ovl{L}_{n_{3}}\ovl{M}_{n_{2}}\ovl{M}_{n_{4}}\big]=0$. Then
\begin{align*}
\sum_{n_{1},n_{2}\ge 1}\Erw\Big[\ovl{L}_{n_{1}} \ovl{L}_{n_{1}} \ovl{M}_{n_{2}}\ovl{M}_{n_{2}}\Big]\ &=\ \sum_{n_{1}\neq n_{2}} \Erw\big[\ovl{L}_{n_{1}}^{2} \big]\Erw\big[\ovl{M}_{n_{2}}^{2}\big]+\sum_{n\ge 1}\Erw\Big[\ovl{L}_{n}^{2}\ovl{M}_{n}^{2}\Big]
\\
&\le\ \sum_{n_{1}\neq n_{2}} q_{n_{1}}z_{n_{2}}+2\sum_{n\ge 1} q_{n} z_{n}\ \le\ 2 \sum_{n_{1}\ge 1 }q_{n_{1}}\sum_{n_{2}\ge 1}z_{n_{2}}\\
&\le\ C_{1}(w-u)^{2}b_{\alpha}(t)^{2}  
\end{align*}
for all $u<v<w$ in the interval $[-A,\,A]$, all sufficiently large $t>0$ and some $C_{1}>0$. The first equality holds because $L_{n}$ and $M_{n}$ cannot be equal to $1$ simultaneously, and the last inequality is just \eqref{eq:est_sum_q_r}. Regarding the first inequality, one has to combine
\begin{gather*}
\Erw [\ovl{L}_{n_{1}}^{2}]\ =\ q_{n_{1}}(1-q_{n_{1}})\le q_{n_{1}},\qquad\Erw [\ovl{M}_{n_{2}}^{2}]\ =\ z_{n_{2}}(1-z_{n_{2}})\le z_{n_{2}}
\shortintertext{and}
\begin{split}
\Erw\Big[\ovl{L}_{n}^{2}\ovl{M}_{n}^{2}\Big]\ &=\ q_{n} (1-q_{n})^{2}(-z_{n})^{2} +z_{n}(1-z_{n})^{2} (-q_{n})^{2} + (1-q_{n}-z_{n})(-q_{n})^{2}(-z_{n})^{2}\\
&=q_{n}z_{n}(q_{n}+z_{n}-3q_{n}z_{n})\le 2 q_{n}z_{n}.
\end{split}
\end{gather*}
This completes the proof of tightness, and the convergence
$$ \Big(\frac{\wh{N}(h_{\alpha}(t+u))-V(h_{\alpha}(t+u))}{(\mu^{-1-1/\alpha} c_{\alpha}(h_{\alpha}(t)))^{1/2}}\Big)_{u\in\R}~\Longrightarrow ~(X_{\alpha}(u))_{u\in\R}\quad\text{as }t\to\infty $$
in the $J_{1}$-topology on $D(\R)$ follows as a consequence, which in turn completes the proof of Theorem \ref{thm:main}.

\subsection{Proof of Theorem \ref{thm:integral}}

By the properties of stochastic integrals with respect to white noise, $Y_{\alpha}$ is a stationary centered Gaussian process, and its covariance function equals
\begin{align*}
{\rm Cov}\,&(Y_{\alpha}(u), Y_{\alpha}(v))\\
&=\ \int_{\R}\int_{[0,\,1]} (\1_{\{y \le \Phi_{\alpha}(a_{\alpha} u+x)\}} - \Phi_{\alpha}(a_{\alpha} u+x))(\1_{\{y \le \Phi_{\alpha}(a_{\alpha} v+x)\}} - \Phi_{\alpha}(a_{\alpha} v+x))\ {\rm d}x\ {\rm d}y\\
&=\ \int_{\R} (\Phi_{\alpha}(a_{\alpha} (u\wedge v)+x)-\Phi_{\alpha}(a_{\alpha} (u\wedge v)+x)\Phi_{\alpha}(a_{\alpha} (u\vee v)+x))\ {\rm d}x
\end{align*}
for $u,v\in\R$, where the last line follows with the help of the formula
$$ \int_{0}^{1}(\1_{\{y\le a\}}-a)(\1_{\{y<b\}}-b)\ {\rm d}y\ =\ a\wedge b - ab, $$
valid for all $a,b\in [0,1]$. By finally integrating with respect to $x$, we obtain
\begin{align*}
&{\rm Cov}\,(Y_{\alpha}(u),Y_{\alpha}(v))\\
&= \int_{\R}(\Phi_{\alpha}(a_{\alpha} (u\wedge v)+x)-\Phi_{\alpha}(a_{\alpha} (u\wedge v)+x)\Phi_{\alpha}(a_{\alpha} (u\vee v)+x))\ {\rm d}x\\
&=\ \int_{\R}\Prob\{\mathcal{S}_{\alpha}(1)\le a_{\alpha} (u\wedge v)+x\}\Prob\{\mathcal{S}_{\alpha}(1)>a_{\alpha} (u\vee v)+x)\}\ {\rm d}x\ =\ {\rm Cov}\,(X_{\alpha}(u),X_{\alpha}(v)).
\end{align*}

By showing next that $\Erw (Y_{\alpha}(u)-Y_{\alpha}(0))^{2}\sim a_{\alpha} |u|$ as $u\to 0$, the claim about H\"{o}lder continuity of the paths follows from the Kolmogorov-Chentsov theorem. Assume that $u>0$. We recall from Theorem 1 in \cite{Yamazato:1978} that the law of $\mathcal{S}_{\alpha}(1)$ is unimodal and has a continuous density $g_{\alpha}$, say. With this at hand, we infer with the help of the monotone convergence theorem, for some $\theta(y,u)\in [y, a_{\alpha} u+y]$,
\begin{align*}
\Erw (Y_{\alpha}(u)-Y_{\alpha}(0))^{2}\ &=\ 2\int_{\R}(\Prob\{\mathcal{S}_{\alpha}(1)>y\}-\Prob\{\mathcal{S}_{\alpha}(1)>a_{\alpha} u+y\})\Prob\{\mathcal{S}_{\alpha}(1)\le y\}\ {\rm d}y\\
&=\ 2a_{\alpha} u\int_{\R}g_{\alpha}(\theta (y,u))\Prob\{\mathcal{S}_{\alpha}(1)\le y\}\ {\rm d}y\quad\text{for some }\theta(y,u)\in [y, a_{\alpha} u+y]\\
&\xrightarrow{u \searrow 0}~2a_{\alpha} u\int_{\R} g_{\alpha}(y)\Prob\{\mathcal{S}_{\alpha}(1)\le y\}\ {\rm d}y\ =\ a_{\alpha} u.
\end{align*}
For aesthetic reasons we write $u\searrow 0$ in place of $u\to 0+$. The case $u<0$ can be treated similarly.

\section{Proofs for Section \ref{sect:minimum}}

In order to prove Theorem \ref{thm:min}, we need the following lemma.
\begin{lemma}\label{lem:aux}
Assume that the law of $\xi$ belongs to the domain of attraction of an $\alpha$-stable law for some $\alpha\in (1,2]$. Then $\mu=\Erw\xi<\infty$ and
$$ -\log \prod_{n\ge \lfloor t/\mu\rfloor+1}\Prob\{S_{n}>t\}\ =\ O(t)\quad\text{as }t\to\infty $$
\end{lemma}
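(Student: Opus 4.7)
My plan is to apply the elementary inequality $-\log(1-q)\le q/(1-q)$, valid for $q\in[0,1)$, in order to reduce the bound on $-\log\prod_{n\ge\lfloor t/\mu\rfloor+1}\Prob\{S_n>t\}$ to a bound on $\sum_{n\ge 1}\Prob\{S_n\le t\}$. The latter is exactly $V(t)$, which by the elementary renewal theorem satisfies $V(t)\sim\mu^{-1}t$ and in particular is $O(t)$. Note that $\mu=\Erw\xi<\infty$ is built into the hypothesis, since membership in the DoA of an $\alpha$-stable law with $\alpha\in(1,2]$ is known to force finite mean. The only ingredient beyond the renewal asymptotics is a uniform positive lower bound $\Prob\{S_n>t\}\ge\delta>0$ for all $n\ge\lfloor t/\mu\rfloor+1$ and all sufficiently large $t$.

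The main step is to produce this uniform lower bound, and I expect it to be the most delicate piece of the argument. For any $n>t/\mu$ we have $n\mu>t$, hence
$$\Prob\{S_n>t\}\ \ge\ \Prob\{S_n>n\mu\}\ =\ \Prob\big\{(S_n-n\mu)/c_n>0\big\}$$
for any positive normalizing sequence $c_n$. By hypothesis, $\xi$ lies in the DoA of an $\alpha$-stable law with $\alpha\in(1,2]$, so there is a sequence $c_n>0$ for which $(S_n-n\mu)/c_n$ converges in distribution to a nondegenerate $\alpha$-stable random variable $\mathcal{L}$. The law of $\mathcal{L}$ admits a continuous density, so $0$ is a continuity point of its distribution function and
$$\Prob\{S_n>n\mu\}\ \xrightarrow{n\to\infty}\ \Prob\{\mathcal{L}>0\}\ =:\ p\ \in\ (0,1).$$
Hence there exists $n_0\in\N$ such that $\Prob\{S_n>n\mu\}\ge p/2$ for all $n\ge n_0$, and setting $t_0:=\mu n_0$ we obtain $\delta:=p/2$ as a uniform lower bound for $\Prob\{S_n>t\}$ throughout the required range $n\ge\lfloor t/\mu\rfloor+1$, $t\ge t_0$.

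With $\delta$ in hand, for each such $(n,t)$ we have $\Prob\{S_n\le t\}\le 1-\delta$, so
$$-\log\Prob\{S_n>t\}\ =\ -\log\bigl(1-\Prob\{S_n\le t\}\bigr)\ \le\ \frac{\Prob\{S_n\le t\}}{\Prob\{S_n>t\}}\ \le\ \delta^{-1}\Prob\{S_n\le t\}.$$
Summing over $n\ge\lfloor t/\mu\rfloor+1$ and extending the sum to all $n\ge 1$ yields
$$-\log\prod_{n\ge\lfloor t/\mu\rfloor+1}\Prob\{S_n>t\}\ =\ \sum_{n\ge\lfloor t/\mu\rfloor+1}\bigl(-\log\Prob\{S_n>t\}\bigr)\ \le\ \delta^{-1}\sum_{n\ge 1}\Prob\{S_n\le t\}\ =\ \delta^{-1}V(t)\ =\ O(t),$$
which is the stated conclusion. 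Everything after the uniform lower bound is purely mechanical, so the proof essentially collapses to an application of the stable CLT at the critical index $n\sim t/\mu$.
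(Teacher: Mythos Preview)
Your proof is correct and shares its opening move with the paper's argument: both establish a uniform lower bound $\Prob\{S_n>t\}\ge\delta>0$ for all $n>\lfloor t/\mu\rfloor$ and large $t$ via the stable central limit theorem. From that point, however, the two arguments diverge. The paper splits the range into $t/\mu\le n\le 2t/\mu$ and $n\ge 2t/\mu$; for the first block it simply counts $O(t)$ terms each bounded by $-\log\delta$, while for the second it invokes a Chernoff-type estimate (via the Laplace transform of $\xi$) to show $\sum_{n\ge 2t/\mu}\Prob\{S_n\le t\}=o(1)$. Your route is more economical: applying $-\log(1-q)\le q/(1-q)\le q/\delta$ uniformly and summing gives $\delta^{-1}V(t)=O(t)$ directly from the elementary renewal theorem, with no need to split the range or to use any exponential-moment bound. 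The paper's version does extract slightly more (the far tail is $o(1)$ rather than merely $O(t)$), but that refinement is never used, so your streamlined argument is entirely adequate for the lemma as stated.
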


\begin{proof}
The proof mimics the one given on pp.~123-124 in \cite{Hough+Krishnapur+Peres+Virag:2009} for the particular case where $\xi$ has an exponential law. In order to simplify the subsequent presentation, we omit the use of integer parts and write, for example, $t/\mu$ instead of $\lfloor t/\mu\rfloor$. This does not affect the asymptotics.

\vspace{.1cm}
By assumption, $\lim_{t\to\infty}\Prob\{S_{t/\mu}>t\}=\Prob\{X_{\alpha}>0\}=:c_{\alpha}>0$, where $X_{\alpha}$ is an an $\alpha$-stable random variable. As a consequence, $\Prob\{S_{t/\mu}>t\}\ge c_{\alpha}/2$ for all sufficiently large $t$ and therefore
$$ \prod_{n=t/\mu+1}^{2t/\mu}\Prob\{S_{n}>t\}\ \ge\ \prod_{n=t/\mu+1}^{2t/\mu}\Prob\{S_{t/\mu}>t\}\ \ge\ (c_{\alpha}/2)^{t/\mu}. $$
This proves $-\log \prod_{n=t/\mu+1}^{2t/\mu}\Prob\{S_{n}>t\}=O(t)$ as $t\to\infty$.

\vspace{.1cm}
For large $t$ and all $n\ge 2t/\mu$, we further have $\Prob\{S_{n}\le t\}\le 1-c_{\alpha}/2$. As $-\log(1-x)\le (2/c_{\alpha})x$ for all $x\in [0,1-c_{\alpha}/2]$, it follows
$$ -\log\prod_{n\ge 2t/\mu}\Prob\{S_{n}>t\}\le (2/c_{\alpha})\sum_{n\ge 2t/\mu}\Prob\{S_{n}\le t\}\ \le\ (2/c_{\alpha}) \sum_{n\ge 2t/\mu}\Prob\{S_{n}\le \mu n/2\}, $$
and since $\lim_{u\to 0+}u^{-1}(-\log \Erw\exp(-u\xi))=\mu$, we conclude that $c:=\mu/2+u_{0}^{-1}\log \Erw\exp(-u_{0} \xi)\in (-\infty,0)$ for some $u_{0}>0$. With the help of Markov's inequality, this yields
\begin{align*}
\sum_{n\ge 2t/\mu}\Prob\{S_{n}\le \mu n/2\}\ &\le\ \sum_{n\ge 2t/\mu}\exp(u_{0}\mu n/2)\,\Erw\exp(-u_{0} S_{n})\\
&=\ \sum_{n\ge 2t/\mu}\exp(cn)\ =\ o(1)\quad\text{as }t\to\infty
\end{align*}
and thus $-\log\prod_{n\ge 2t/\mu}\Prob\{S_{n}>t\}=o(1)$ as $t\to\infty$, which completes the proof.
\end{proof}

\begin{proof}[Proof of Theorem \ref{thm:min}]
The assumptions of the theorem obviously ensure that $\xi$ is square-inte\-grable, which in turn implies that its law belongs to the normal domain of attraction of a normal distribution. By Lemma \ref{lem:aux}, it is therefore enough to examine the logarithmic asymptotics of the product $\prod_{n=1}^{\lfloor t/\mu\rfloor}\Prob\{S_{n}>t\}$.

\vspace{.2cm}
(a) For any $a\in (0, 1/\mu)$ and $t>1/a$,
\begin{align*}
-t^{2}\log\prod_{n=1}^{t/\mu}\Prob\{S_{n}>t\}\ &=\ t^{2}\int_{1}^{t/\mu}-\log \Prob\{S_{\lfloor y\rfloor}>t\}\ {\rm d}y\\
&=\ \int_{1/t}^{1/\mu}\frac{-x\log \Prob\{S_{\lfloor tx\rfloor}>t\}}{tx}\ {\rm d}x\\
&=\ \bigg[\int_{1/t}^{a}+\int_{a}^{1/\mu}\bigg]\frac{-x\log \Prob\{S_{\lfloor tx\rfloor}>t\}}{tx}\ {\rm d}x.
\end{align*}
By Cram\'er's theorem (see, for instance, Thm.\,I.4 in 
\cite{Hollander:2000}),
$$ \lim_{t\to\infty}\frac{-\log \Prob\{S_{\lfloor tx\rfloor}>t\}}{tx}\ =\ I(1/x)\quad\text{for any }x\in [a,1/\mu], $$
and the convergence is uniform in $x\in [a, 1/\mu]$ because $x\mapsto I(1/x)$ is continuous  ($I$ is convex) and $x\mapsto -\log \Prob\{S_{\lfloor tx\rfloor}>t\}/(tx)$ is nonincreasing for each $t$. Consequently,
$$ \lim_{t\to\infty} \int_{a}^{1/\mu}\frac{-x\log \Prob\{S_{\lfloor tx\rfloor}>t\}}{tx}\ {\rm d}x\ =\ \int_{a}^{1/\mu}xI(1/x){\rm d}x. $$
for any $a\in (0,1/\mu)$. To complete the proof of (a), it is therefore enough to prove
$$ \lim_{a\searrow 0}\limsup_{t\to\infty}\int_{1/t}^{a}\frac{-\log \Prob\{S_{\lfloor tx\rfloor}>t\}}{t}\ {\rm d}x\ =\ 0. $$
By using
\begin{equation}\label{eq:ineq}
\Prob\{S_{n}>t\}\ \ge\ \Prob\Big\{\min_{1\le k\le n}\,\xi_{k}>t/n\Big\}\ =\ \big(\Prob\{\xi>t/n\}\big)^n,\quad t\ge 0,~ n\in\N
\end{equation}
and neglecting for simplicity integer parts, we infer
$$ \int_{1/t}^{a}\frac{-\log \Prob\{S_{\lfloor tx\rfloor}>t\}}{t}\ {\rm d}x\ \le\ \int_{0}^{a}-x\log\Prob\{\xi>1/x\}\ {\rm d}x, $$
and thus arrive at the desired conclusion because \eqref{eq:integral} ensures that the right-hand integral vanishes as $a\searrow 0$.

\vspace{.2cm}
(b) Assuming \eqref{eq:log tail condition}, Kasahara's Tauberian theorem (Thm.\,4.12.7 in
\cite{Bingham+Goldie+Teugels:1989}) provides, for all $n\in\N$ and some $c>0$,
\begin{gather}
\lim_{t\to\infty}\frac{-\log \Prob\{S_{n}>t\}}{t^\alpha \ell(t)}\ =\ cn^{1-\alpha},\nonumber
\shortintertext{hence}
\lim_{t\to\infty}\frac{-\log \prod_{k=1}^{n}\Prob\{S_{k}>t\}}{t^\alpha \ell(t)}\ =\ c\sum_{k=1}^{n} k^{1-\alpha}.\label{eq:early}
\end{gather}

(b1) We first note that $\lim_{t\to\infty}\ell^{*}(t)=\infty$ is equivalent to $\int_{0}^{1} -y\log \Prob\{\xi>1/y\}{\rm d}y=\infty$. Further, by Prop.\,1.5.9(a) in \cite{Bingham+Goldie+Teugels:1989}, $\lim_{t\to\infty}(\ell^{*}(t)/\ell(t))=\infty$. Now, for $t>0$ and any fixed $n\in\N$,
\begin{align*}
-\log \prod_{k=n}^{\lfloor t/\mu\rfloor}\Prob\{S_{k}>t\}\ &=\ \int_{n}^{\lfloor t/\mu\rfloor}-\log \Prob\{S_{\lfloor y\rfloor}>t\}\ {\rm d}y\\
&=\ t\int_{n/t}^{\lfloor t/\mu\rfloor/t}-\log \Prob\{S_{\lfloor tx\rfloor}>t\}\ {\rm d}x\\
&\le\ t^{2} \int_{n/t}^{\lfloor t/\mu\rfloor/t}\big(\lfloor tx\rfloor/t\big)(-\log \Prob\{\xi>t/\lfloor tx\rfloor\})\ {\rm d}x\\
&\le\ t^{2}\int_{n/t}^{1/\mu} -x\log \Prob\{\xi>t/\lfloor tx\rfloor\}\ {\rm d}x  
\end{align*}
having utilized \eqref{eq:ineq} for the penultimate inequality. Given $\eps\in (0,1)$, there exists $n_{0}\in\N$ such that $(1-\eps)\lfloor x\rfloor\le x\le (1+\eps)\lfloor x\rfloor$ whenever $x\ge n_{0}$ (the left-hand inequality will be used later). Since $x\mapsto -\log \Prob\{\xi>x\}$ is nondecreasing, the right-hand inequality provides
$$ -\log \Prob\{\xi>(tx/\lfloor tx \rfloor)(1/x)\}\ \le\ -\log\Prob\{\xi>(1+\eps)/x\} $$
for all $x\ge n_{0}/t$. By combining these facts, we obtain
\begin{equation*}
-\log\prod_{k=n_{0}}^{\lfloor t/\mu\rfloor}\Prob\{S_{k}>t\}\ \le\ t^{2}\int_{n_{0}/t}^{1/\mu}-x\log \Prob\{\xi>(1+\eps)/x\}\ {\rm d}x~\sim~(1+\eps)^{2} ct^{2}\ell^{*}(t)\quad\text{as }t\to\infty.
\end{equation*}
If $\alpha=2$, Eq.\,\eqref{eq:early} tells us that the contribution of $\prod_{k=1}^{n_{0}-1}\Prob\{S_{k}>t\}$ is negligible in comparison to that of $\prod_{k=n_{0}}^{\lfloor t/\mu\rfloor}\Prob\{S_{k}>t\}$ as $t\to\infty$. Hence,
$$ \limsup_{t\to\infty}\frac{-\log\prod_{k=1}^{\lfloor t/\mu\rfloor}\Prob\{S_{k}>t\}}{t^{2}\ell^{*}(t)}\ \le\ (1+\eps)^{2}c $$
for all $\eps\in (0,1)$, that is
$$ \limsup_{t\to\infty}\frac{-\log \prod_{n=1}^{\lfloor t/\mu\rfloor}\Prob\{S_{n}>t\}}{t^{2}\ell^{*}(t)}\ \le\ c.$$

It remains to prove the reverse inequality for the lower limit. By Markov's inequality
$$ -\log \Prob\{S_{n}>t\}\ \ge\ n(u(t/n)-\log \Erw\exp(u\xi)). $$
for $n\in\N$ and $t,u>0$, which proves
\begin{equation}\label{eq:ineq2}
-\log \Prob\{S_{n}>t\}\ \ge\ n I(t/n),\quad n\in\N,~t>0.
\end{equation}
Using this, we obtain, for $n_{0}\in\N$ as above and $t\le \mu n_{0}$,
\begin{align*}
-\log \prod_{k=n_{0}}^{\lfloor t/\mu\rfloor}\Prob\{S_{k}>t\}\ &=\ t\int_{n_{0}/t}^{\lfloor t/\mu\rfloor/t}(-\log \Prob\{S_{\lfloor tx\rfloor}>t\})\ {\rm d}x\\
&\ge\ t^{2} \int_{n_{0}/t}^{\lfloor t/\mu\rfloor/t} (\lfloor tx\rfloor/t)I(t/\lfloor tx\rfloor){\rm d}x\\
&\ge (1-\eps) t^{2}\int_{n_{0}/t}^{\mu^{-1}(1-\eps)}xI((1-\eps)/x)\ {\rm d}x,
\end{align*}
where the last inequality holds because $x\mapsto I(1/x)$ is nonincreasing on $(0, \mu^{-1}(1-\eps))$.

\vspace{.1cm}
The following lemma provides the asymptotic behavior of the rate function $I(x)$.

\begin{lemma}\label{lem:two}
Assume
$$ -\log \Prob\{\xi>t\}\,\sim\,c t^{2}\ell(t)\quad\text{as }t\to\infty. $$
for some $c>0$ and a slowly varying function $\ell$ satisfying \eqref{eq:bojanic}. Then
$$ \log I(t)\,\sim\,-\log \Prob\{\xi>t\}\,\sim\,c t^{2}\ell(t)\quad\text{as }t\to\infty. $$
\end{lemma}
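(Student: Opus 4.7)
I will establish $I(t) \sim H(t) := -\log \Prob\{\xi > t\}$ by the standard duality between the tail of a distribution and its moment generating function $M(s) := \Erw\exp(s\xi)$, so that the asymptotic equivalences of the lemma follow directly from the hypothesis $H(t) \sim ct^{2}\ell(t)$ (which in particular ensures $M(s) < \infty$ for every $s \ge 0$, i.e.\ $J = [0,\infty)$).

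The upper bound $I(t) \le H(t)$ is the easy half: Markov's inequality gives $\Prob\{\xi > t\} \le e^{-st}M(s)$ for every $s \ge 0$, and taking $-\log$ followed by supremizing over $s$ yields $H(t) \ge I(t)$.

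For the matching lower bound I will first show that, as $s \to \infty$,
\begin{equation*}
\log M(s)\,\sim\,\frac{s^{2}}{4c\,\ell(s)},
\end{equation*}
by a Laplace-type argument applied to the representation $M(s) = 1 + s\int_{0}^{\infty}e^{st - H(t)}\,{\rm d}t$. For each $\eps>0$ the hypothesis gives $(1-\eps)ct^{2}\ell(t) \le H(t) \le (1+\eps)ct^{2}\ell(t)$ on $[T_{\eps},\infty)$; the exponent $st - H(t)$ is then concentrated near its maximiser $t^{*}(s) \approx s/(2c\ell(s))$, and Potter-type bounds for the slowly varying factor $\ell$ together with strict concavity of the leading quadratic term let one control the tails of the integral without any smoothness of $H$. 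Substituting the near-optimal choice $s = 2ct\ell(t)$ into the definition of $I$ then gives
\begin{equation*}
I(t)\,\ge\,2ct^{2}\ell(t)\,-\,\frac{(2ct\ell(t))^{2}}{4c\,\ell(2ct\ell(t))}(1+o(1))\qquad\text{as } t \to \infty,
\end{equation*}
and \eqref{eq:bojanic} enters crucially here: it ensures $\ell(2ct\ell(t)) \sim \ell(t)$, so the subtracted term reduces to $ct^{2}\ell(t)(1+o(1))$ and one is left with $I(t) \ge ct^{2}\ell(t)(1+o(1)) = H(t)(1+o(1))$.

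The main obstacle I foresee is the rigorous saddle-point analysis of $\log M(s)$: without differentiability of $H$ the scale $t^{*}(s) \approx s/(2c\ell(s))$ must be identified purely from the asymptotic relation $H(t^{*}) \approx st^{*}/2$; the contributions to $\int e^{st - H(t)}\,{\rm d}t$ coming from $t$ far from $t^{*}$ must be estimated by uniform slow-variation bounds combined with convexity; and the replacement of $\ell(t^{*}(s))$ by $\ell(s)$ — as well as the later identification $\ell(2ct\ell(t)) \sim \ell(t)$ in the Legendre-transform computation — both hinge on the Bojanic-type condition \eqref{eq:bojanic}. I expect this to be the only delicate step; once the asymptotic for $\log M(s)$ is in place, the rest of the argument is a clean substitution.
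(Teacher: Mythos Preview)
Your approach is correct and structurally identical to the paper's: both routes pass through the asymptotic $\log M(s)\sim s^{2}/(4c\,\ell(s))$ and then recover $I(t)\sim ct^{2}\ell(t)$ from it, using the Bojanic-type condition~\eqref{eq:bojanic} precisely to justify $\ell(t\,\ell(t)^{\beta})\sim\ell(t)$ at the relevant rescaled arguments.

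The only difference is one of packaging. Where you propose a direct Laplace/saddle-point analysis of $M(s)=1+s\int_{0}^{\infty}e^{st-H(t)}\,{\rm d}t$ and then an explicit substitution $s=2ct\ell(t)$ into the Legendre transform, the paper invokes ready-made results from \cite{Bingham+Goldie+Teugels:1989}: Theorem~2.3.3 turns \eqref{eq:bojanic} into $\ell(t\ell(t)^{\beta})/\ell(t)\to 1$ for all $\beta\in\R$; Corollary~2.3.4 together with Kasahara's Tauberian theorem (Theorem~4.12.7) yields $\log M(s)\sim cs^{2}/(4\ell(s))\sim cs^{2}\ell^{\#}(s)/4$ with $\ell^{\#}$ the de~Bruijn conjugate; and Theorem~1.8.10 (on Young conjugates of regularly varying functions) delivers $I(t)\sim ct^{2}\ell(t)$ in one stroke. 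This buys brevity and sidesteps exactly the ``main obstacle'' you identified---the rigorous saddle-point control without smoothness of $H$---which is already absorbed into the proofs of those BGT theorems. Your elementary upper bound $I(t)\le H(t)$ via Markov's inequality is, incidentally, not made explicit in the paper's proof, since Theorem~1.8.10 handles both directions at once.
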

\begin{proof}
By Theorem 2.3.3 in \cite{Bingham+Goldie+Teugels:1989}, relation \eqref{eq:bojanic} entails $\lim_{t\to\infty}(\ell(t(\ell(t))^\beta)/\ell(t))=1$ for all $\beta\in\R$. Using this with $\beta=2$ and $\beta=1/2$, we infer with the help of Cor.\,2.3.4 in \cite{Bingham+Goldie+Teugels:1989} and Kasahara's Tauberian theorem (Thm.\,4.12.7 in \cite{Bingham+Goldie+Teugels:1989})
$$ \log \Erw\exp(s\xi)~\sim~ \frac{cs^{2}}{4\ell(s)}~\sim~\frac{cs^{2}\ell^{\#}(s)}{4}\quad\text{as }s\to\infty, $$
where $\ell^{\#}$ denotes the de Bruijn conjugate of $\ell$, see p.\,29 in \cite{Bingham+Goldie+Teugels:1989} for the definition. We now arrive at the claim by an appeal to Thm.\,1.8.10 in \cite{Bingham+Goldie+Teugels:1989}.
\end{proof}

\vspace{.1cm}
A combination of the previous lemma with \eqref{eq:early} provides
$$ \liminf_{t\to\infty}\frac{-\log \prod_{n=1}^{\lfloor t/\mu\rfloor}\Prob\{S_{n}>t\}}{t^{2}\ell^{*}(t)}\ \ge\ (1-\eps)^3c $$
for all $\eps\in (0,1)$ and then
$$ \liminf_{t\to\infty}\frac{-\log \prod_{n=1}^{\lfloor t/\mu\rfloor}\Prob\{S_{n}>t\}}{t^{2}\ell^{*}(t)}\ \ge\ c. $$

(b2) In view of \eqref{eq:early}, it suffices to show that
\begin{equation}\label{eq:inter1}
\lim_{n\to\infty}\limsup_{t\to\infty}\frac{-\log \prod_{k=n}^{\lfloor t/\mu\rfloor} \Prob\{S_{k}>t\}}{t^\alpha \ell(t)}=0.
\end{equation}
Using again \eqref{eq:ineq} while ignoring integer parts, we conclude that 
\begin{align*}
-\log \prod_{k=n}^{t/\mu}\Prob\{S_{k}>t\}\ &=\ t\int_{n/t}^{1/\mu}-\log \Prob\{S_{\lfloor ty\rfloor}>t\}\ {\rm d}y\\
&\le\ t^{2}\int_{n/t}^{1/\mu}-y\log\Prob\{\xi>1/y\}{\rm d}y\\
&=\ t^{2}\int_{\mu}^{t/n}-y^{-3}\log \Prob\{\xi>y\}{\rm d}y\\
&\sim\ (\alpha-2)^{-1}(t/n)^\alpha \ell(t)\quad\text{as }t\to\infty,
\end{align*}
and this obviously shows \eqref{eq:inter1}. The asymptotic relation is ensured by Karamata's theorem (Prop.\,1.5.8 in \cite{Bingham+Goldie+Teugels:1989}).
\end{proof}

\begin{proof}[Proof of Theorem \ref{thm:heavy}]
(a) Put $c(t):=\Prob\{\xi>t\}$ and recall that $\tau(t)=\inf\{k\in\N: S_{k}>t\}$ for $t\ge 0$. For any positive $a$ and $b$, $a<b$, and all sufficiently large $t$,
\begin{align*}
\int_{1}^{\infty}-\log \Prob\{S_{\lfloor x\rfloor}>t\}\ {\rm d}x\ &=\ (c(t))^{-1}\int_{c(t)}^{\infty}-\log\Prob\{S_{\lfloor x/c(t)\rfloor}>t\}\ {\rm d}x\\
&=\ (c(t))^{-1}\int_{c(t)}^{\infty}-\log\Prob\{\tau(t)\le {\lfloor x/c(t)\rfloor}\}\ {\rm d}x\\
&=\ (c(t))^{-1}\bigg(\int_{c(t)}^{a}+\int_{a}^{b}+\int_{b}^{\infty}\bigg)\ldots
\end{align*}
Under the assumption of part (a), $c(t)\tau(t)$ converges in distribution to $W_{\alpha}^{\leftarrow}(1)$ as $t\to\infty$ (see, for instance, Thm.\,7 in \cite{Feller:1949}), and the convergence
\begin{equation}\label{eq:distrconv}
\lim_{t\to\infty} \log \Prob\{\tau(t)\le {\lfloor x/c(t)\rfloor}\}\ =\ \log \Prob\{W_{\alpha}^{\leftarrow}(1)\le x\}
\end{equation}
is uniform in $x\in [a,b]$ by Poly\`a's theorem (the law of the limit is continuous). This entails
$$ \int_{a}^{b}-\log\Prob\{\tau(t)\le {\lfloor x/c(t)\rfloor}\}\ {\rm d}x\ =\ \int_{a}^{b}-\log\Prob\{W_{\alpha}^{\leftarrow}(1)\le x\}\ {\rm d}x. $$

Fixing some $p\in (0,1)$ such that $-\log(1-x)\le 2x$ for all $x\in [0, p]$, a simple tightness~argument provides $\sup_{x\ge b}\Prob\{c(t)\tau(t)>x\}\le p$ when choosing $b$ and $t$ sufficiently large. It follows
$$ \int_b^{\infty}-\log \Prob\{ c(t)\tau(t)\le x\}\ {\rm d}x\ \le\ 2\int_b^{\infty}\Prob\{c(t)\tau(t)>x\}\ {\rm d}x\ \le\ 2\sup_{t\ge 1}\Erw (c(t)\tau(t))^{2} \int_{b}^{\infty} x^{-2}\ {\rm d}x, $$
where integer parts have again been omitted. As $\sup_{t\ge 1} \Erw (c(t)\tau(t))^{2}<\infty$ by Thm.\,1.5 in \cite{Iksanov+Marynych+Meiners:2016}, we see that
$$ \lim_{b\to\infty}\limsup_{t\to\infty}\int_b^{\infty}-\log \Prob\{\tau(t)\le {\lfloor x/c(t)\rfloor}\}\ {\rm d}x\ =\ 0. $$

Turning to $\int_{c(t)}^{a}-\log \Prob\{\tau(t)\le\lfloor x/c(t)\rfloor\}\,{\rm d}x$, note first that $1-\eee^{-y}\ge (1-a)y$ for all $a<1$ sufficiently small and $y\in [0, 2a]$.
For any such $a$, choose $t$ so large that $c(t)<a$. For $x\in [c(t),a]$, it then follows
\begin{align}
\Prob\{S_{\lfloor x/c(t)}>t\}\ &\ge\  \Prob\{\max_{1\le k\le\lfloor x/c(t)\rfloor}\,\xi_{k}>t\}\ =\ 1-\exp(\lfloor x/c(t)\rfloor\log\Prob\{\xi\le t\})\nonumber\\
&\ge\ 1-\exp(-\lfloor x/c(t)\rfloor c(t))\ \ge\ (1-a)\lfloor x/c(t)\rfloor c(t)
\label{eq:relmax}
\end{align}
and thereupon
\begin{align*}
\int_{c(t)}^{a}&-\log \Prob\{\tau(t)\le \lfloor x/c(t)\rfloor\}\ {\rm d}x\ =\ \int_{c(t)}^{a} -\log \Prob\{S_{\lfloor x/c(t)\rfloor}>t\}\ {\rm d}x\\
&\le\ -(a-c(t))\log(1-a)\,-\,\sum_{k=1}^{\lfloor a/c(t)\rfloor}\int_{kc(t)}^{(k+1)c(t)}\log\big(\lfloor x/c(t)\rfloor c(t)\big)\ {\rm d}x\\
&\le\ -\log(1-a)\,-\,c(t)\sum_{k=1}^{\lfloor a/c(t)\rfloor}\log(kc(t)).
\end{align*}
This in combination with $\lim_{t\to\infty}c(t)\sum_{k=1}^{\lfloor a/c(t)\rfloor}\log(kc(t))=\int_0^a \log y\ {\rm d}y$ allows us to conclude
$$ \lim_{a\searrow 0}\limsup_{t\to\infty}\int_{c(t)}^{a} \big(-\log \Prob\{\tau(t)\le \lfloor x/c(t)\rfloor\}\big){\rm d}x\ =\ 0, $$
which completes the proof of part (a).

\vspace{.2cm}
(b) We first argue that the law of $\xi$ belongs to the domain of attraction of a suitable stable law under the given tail assumption with $\alpha>1$. In fact, if $\alpha\in (1,2)$, or $\alpha\ge 2$ and ${\rm Var}\,\xi<\infty$ (automatically fulfilled if $\alpha>2$), then this has already been pointed out at the beginning of Section \ref{sect:weak convergence}, the stable law having index $\alpha\wedge 2$. For the remaining case $\alpha=2$ and ${\rm Var}\,\xi=\infty$, our assumption $\Prob\{\xi>t\}\sim t^{-2}\ell(t)$ as $t\to\infty$ ensures that $\mu_{2}(t):=\int_{[0,\,t]}y^{2}\,\Prob\{\xi\in {\rm d}y\}$ belongs to the de Haan class $\Pi$ with auxiliary function $\ell$, that is
$$ \lim_{t\to\infty}\frac{\mu_{2}(ht)-\mu_{2}(t)}{\ell(t)}\ =\ \log h\quad\text{for all }h>0, $$
and it is known that any such function is slowly varying at $\infty$. But this is indeed a necessary and sufficient condition for the law of $\xi$ to be in the non-normal domain of attraction of a normal distribution. Having thus verified that the law of $\xi$ is attracted by a stable law, Lemma \ref{lem:aux} tells us that it suffices to prove
$$ \lim_{t\to\infty}\frac{-\log\prod_{n=1}^{\lfloor t/\mu\rfloor}\Prob\{S_{n}> t\}}{t\log t}\ =\ \frac{\alpha-1}{\mu}.$$
To this end, we make use of the following large deviation result that follows directly from Thm.\,1 in \cite{Nagaev:1982} or Thm.\,3.3 in \cite{Cline+Hsing:2023}, namely
\begin{equation}\label{eq:Cline+Hsing LDP}
\lim_{n\to\infty}\sup_{t\ge \delta n}\Big|\frac{\Prob\{S_{n}-n\mu>t\}}{n\Prob\{\xi>t\}}-1\Big|\ =\ 0\quad\text{for all }\delta>0.
\end{equation}
For any fixed $\delta$, it entails
$$ \sum_{n=1}^{\lfloor t/(\mu+\delta)\rfloor}\log \Prob\{S_{n}-\mu n>t-\mu n\}\ \sim\ \sum_{n=1}^{\lfloor t/(\mu+\delta)\rfloor}\big(\log n+\log \Prob\{\xi>t-\mu n\}\big)\quad\text{as }t\to\infty. $$
Observing
\begin{gather*}
\sum_{n=1}^{\lfloor t/(\mu+\delta)\rfloor}\log n\ \sim\ \frac{t\log t}{\mu+\delta}\quad\text{as }t\to\infty
\intertext{and that, by the given tail assumption on the law of $\xi$,}
\lim_{t\to\infty}\frac{-\log \Prob\{\xi>t\}}{\log t}\ =\ \alpha,
\end{gather*}
we infer, for any $\eps\in (0,\alpha)$, all sufficiently large $t$, and all positive integers $n\le\lfloor t/(\mu+\delta)\rfloor$, the inequality
$$ (\alpha-\eps)\log (t-\mu n)\ \le\ -\log \Prob\{\xi>t-\mu n\}\ \le\ (\alpha+\eps)\log (t-\mu n). $$
Since
\begin{align*}
\sum_{n=1}^{\lfloor t/(\mu+\delta)\rfloor}&\log (t-\mu n)=\lfloor  t/(\mu+\delta)\rfloor \log t+\sum_{n=1}^{\lfloor t/(\mu+\delta)\rfloor}\log (1-\mu n/t)\\
&=\ (t/(\mu+\delta))\log t+t\int_{0}^{1/(\mu+\delta)}\log (1-\mu x){\rm d}x+ o(t)\quad\text{as }t\to\infty,
\end{align*}
we arrive at the conclusion
$$ \lim_{t\to\infty}\frac{1}{t\log t}\sum_{n=1}^{\lfloor t/(\mu+\delta\rfloor)}(-\log \Prob\{S_{n}-\mu n>t-\mu n\})\ =\ \frac{\alpha-1}{\mu+\delta}. $$
To complete the proof of part (b), we still need to verify that
$$ \lim_{\delta \searrow 0}\limsup_{t\to\infty}\frac{1}{t\log t}\sum_{n=\lfloor t/(\mu+\delta\rfloor)+1}^{\lfloor t/\mu\rfloor}(-\log \Prob\{S_{n}>t\})\ =\ 0. $$
We can argue as in \eqref{eq:relmax} to infer
\begin{align*}
-\log \Prob\{S_{\lfloor t/(\mu+\delta)\rfloor}>t\}\ &\le\ -\log\Prob\{\max_{1\le k\le \lfloor t/(\mu+\delta)\rfloor}\,\xi_{k}>t\}\\
&=\ -\log (1-\exp(\lfloor t/(\mu+\delta)\rfloor\log\Prob\{\xi\le t\}))\\
&\sim\ -\log t-\log \Prob\{\xi>t\}\\
&\sim\ (\alpha-1)\log t
\end{align*}
as $t\to\infty$, which in combination with
$$ \sum_{n=\lfloor t/(\mu+\delta\rfloor)+1}^{\lfloor t/\mu\rfloor}(-\log \Prob\{S_{n}>t\})\ \le\ \big(\lfloor t/\mu\rfloor-\lfloor t/(\mu+\delta)\rfloor\big)(-\log \Prob\{S_{\lfloor t/(\mu+\delta)\rfloor}>t\}) $$
provides the desired result.
\end{proof}

\begin{proof}[Proof of Theorem \ref{thm:semi}]
Since, obviously, $\Erw\xi^{p}<\infty$ for all $p>0$, the law of $\xi$ belongs to the normal domain of attraction of a normal law. Again by Lemma \ref{lem:aux}, it suffices to prove
$$ \lim_{t\to\infty}\frac{-\log \prod_{n=1}^{\lfloor t/\mu\rfloor}\Prob\{S_{n}\le t\}}{t^\alpha \ell(t)}\ =\ \frac{1}{\mu(\alpha+1)}. $$
By Theorem 2.1 in \cite{Borovkov+Mogulskii:2006},
$$ \lim_{n\to\infty}\sup_{t\ge n^{1/(2-\alpha)}\ell_{1}(n)f(n)}\bigg|\frac{\log \Prob\{S_{n}-\mu n>t\}}{\log \Prob\{\xi>t\}}-1\bigg|\ =\ 0, $$
where $\ell_{1}$ varies regularly at $\infty$ (its explicit form is of no importance for the present proof) and $f$ denotes a positive function diverging to $\infty$ as $n\to\infty$. Since $\mu n+n^{1/(2-\alpha)}\ell_{1}(n)f(n)\le t$ holds for any positive integer $n\le \lfloor t/\mu\rfloor$ when choosing $t$ sufficiently large, we infer
\begin{gather*}
\sum_{n=1}^{\lfloor t/\mu\rfloor}\big(-\log \Prob\{S_{n}-\mu n>t-\mu n\}\big)\ \sim\ \sum_{n=1}^{\lfloor t/\mu\rfloor} (t-\mu n)^\alpha \ell(t-\mu n)\ \sim\ \int_{0}^{t/\mu}(t-\mu x)^\alpha\ell(t-\mu x){\rm d}x\\
=\ \frac{1}{\mu}\int_{0}^t x^\alpha \ell(x)\ {\rm d}x\ \sim\ \frac{t^{\alpha+1}\ell(t)}{\mu(\alpha+1)}
\end{gather*}
as $t\to\infty$ and thus the above limit assertion.
\end{proof}

\section{Proof of Theorem \ref{thm:SLLN maxima}}

The obvious duality relation
\begin{equation}\label{eq:duality max and passage time}
\frac{M_{\wh \tau(t)-1}}{\wh \tau(t)}<\frac{t}{\wh \tau(t)}\le \frac{M_{\wh \tau(t)}}{\wh \tau(t)}\quad \text{a.s.},
\end{equation}
valid for all $t\ge 0$, shows that any law of large numbers type result for the decoupled maxima $M_{n}$ also yields a limit result for $\wh{\tau}(t)$ without further ado. Our proof of Theorem \ref{thm:SLLN maxima} therefore only deals with the assertions for the maxima. The following one-sided version of the Hsu-Robbins-Erd\"os theorem (see, for instance, Thm.\,6.11.2 in \cite{Gut:2013}) and two subsequent lemmata serve as auxiliary results.

\begin{assertion}\label{prop:one-sided Hsu-Robbins}
Let $(S_{n})_{n\ge 1}$ be a standard random walk with drift $\mu=0$. Then
\begin{align*}
\Sigma(\eps)\,:=\,\sum_{n\ge 1}\Prob\{S_{n}\ge\eps n\}\,<\,\infty\quad\text{for some/all }\eps>0
\end{align*}
holds if, and only if, $\Erw(\xi^{+})^{2}<\infty$.
\end{assertion}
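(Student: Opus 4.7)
The plan is to prove the two implications separately, relying on a positive/negative part decomposition of $\xi$ for sufficiency and a maximal inequality coupled with the strong law of large numbers for necessity.

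\textbf{Sufficiency.} I would assume $\Erw \xi = 0$ and $\Erw (\xi^+)^2 < \infty$, so that $\mu_+ := \Erw \xi^+ = \Erw \xi^-$ is finite. Decomposing $S_n = T_n^+ - T_n^-$ with $T_n^\pm := \sum_{k=1}^n \xi_k^\pm$, the event $\{S_n \geq \eps n\}$ lies inside the union $\{T_n^+ - n\mu_+ \geq \eps n/2\} \cup \{T_n^- - n\mu_+ \leq -\eps n/2\}$. The first probability is summable in $n$ by the classical (two-sided) Hsu-Robbins-Erd\"os theorem applied to the centered i.i.d.\ variables $\xi_k^+ - \mu_+$, which have finite second moment. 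For the second, nonnegativity of $\xi^-$ gives $\Erw e^{-s\xi^-} < \infty$ for every $s \geq 0$, and Chernoff's bound delivers
$$ \Prob\{T_n^- - n\mu_+ \leq -\eps n/2\}\ \leq\ \exp\bigl(n[s(\mu_+ - \eps/2) + \log\Erw e^{-s\xi^-}]\bigr), $$
which, using $\log\Erw e^{-s\xi^-} = -s\mu_+ + o(s)$ as $s \to 0+$, yields exponential decay in $n$ for a small enough $s > 0$; summability is then trivial.

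\textbf{Necessity.} Assume $\Sigma(\eps) < \infty$ for some $\eps>0$. Since $\mu = 0$ forces the SLLN to hold, $\Prob\{S_{n-1} \leq -\eps n\} \to 0$, so this quantity is at most $1/2$ for all large $n$. The key tool is the inequality
$$ \Prob\bigl\{\max_{1\leq k\leq n} \xi_k \geq 2\eps n\bigr\}\ \leq\ \Prob\{S_n \geq \eps n\}\,+\,n\,\Prob\{\xi \geq 2\eps n\}\,\Prob\{S_{n-1} \leq -\eps n\}, $$
obtained by the union bound over the location $j$ of a large increment and the observation that $\xi_j \geq 2\eps n$ together with $S_n < \eps n$ forces $\sum_{i\ne j}\xi_i < -\eps n$, a sum independent of $\xi_j$. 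Since $\Erw \xi^+ < \infty$, we have $np_n \to 0$ with $p_n := \Prob\{\xi \geq 2\eps n\}$, hence
$\Prob\{\max_k \xi_k \geq 2\eps n\} \geq np_n - \binom{n}{2}p_n^2 = (1-o(1))np_n$. Combining the two displays yields $(1/2 - o(1))np_n \leq \Prob\{S_n \geq \eps n\}$ for large $n$; summing gives $\sum_n np_n < \infty$, which is equivalent to $\Erw (\xi^+)^2 < \infty$ by the standard layer-cake identity.

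\textbf{Main obstacle.} The only nontrivial input is the black-box use of the classical Hsu-Robbins-Erd\"os theorem in the sufficiency half; it is legitimate here because $\xi^+ - \mu_+$ has finite second moment even when $\xi$ itself has infinite negative second moment. If a self-contained argument is preferred, one may instead truncate the positive parts $\xi_k^+$ at level $\eps n/2$, handle the discarded mass through the summable series $\sum_n n\Prob\{\xi > \eps n/2\}$, and bound the centered truncated sum by Chebyshev's inequality, at the cost of some additional bookkeeping.
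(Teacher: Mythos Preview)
Your proof is correct, but it takes a genuinely different route from the paper. The paper dispatches both directions in one stroke: it rewrites $\Sigma(\eps)=\sum_{n\ge 1}\Prob\{S_{n}(\eps)\le 0\}$ with $S_{n}(\eps)=\eps n-S_{n}$, recognizes this as the renewal function at $0$ of a random walk with positive drift $\eps$, and then invokes the classical renewal-theoretic fact that this quantity is finite if and only if $\Erw(S_{1}(\eps)^{-})^{2}=\Erw((\xi-\eps)^{+})^{2}<\infty$. That is the whole proof.

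By contrast, you argue the two implications separately with more hands-on tools: for sufficiency you split $\xi$ into its positive and negative parts, feed the positive part into the two-sided Hsu--Robbins theorem, and kill the negative-part deviation with a Chernoff bound (exploiting that $\xi^{-}\ge 0$); for necessity you compare $\Prob\{\max_{k\le n}\xi_{k}\ge 2\eps n\}$ with $\Prob\{S_{n}\ge\eps n\}$ via a union bound plus independence, and extract the summability of $n\Prob\{\xi\ge 2\eps n\}$. Each step is sound. The trade-off is that the paper's argument is essentially a one-line reduction to a black box from renewal theory, whereas your approach trades that black box for the (equally classical) two-sided Hsu--Robbins theorem and some explicit probabilistic inequalities. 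Your route is arguably more transparent if one is not already fluent in ladder-variable renewal theory; the paper's route is shorter and makes the ``some/all~$\eps$'' equivalence immediate from the second-moment criterion.
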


\begin{proof}
Putting $S_{n}(\eps):=\eps n-S_{n}$, we see that $\Sigma(\eps)=\sum_{n\ge 1}\Prob\{S_{n}(\eps)\le 0\}$ equals the renewal function at $0$ of the random walk
$(S_{n}(\eps))_{n\ge 1}$. It is a well-known fact from renewal theory (see, for instance, p.\,94 in \cite{Gut:2009}) that under the assumption that $\mu$ is finite, this function is finite if, and only if, $\Erw(S_{1}(\eps)^{-})^{2}=\Erw((\xi-\eps)^{+})^{2}<\infty$.
\end{proof}

We put $\ovl{F}=1-F$ for a distribution function $F$.

\begin{lemma}\label{lem:slowly varying tails}
Let $\xi$ be a nonnegative random variable with distribution function $F$ that satisfies
\begin{equation}\label{eq:loglog tail condition}
\lim_{t\to\infty}\frac{t^{2}\ovl{F}(t)}{\log\log t}\ =\ 0.
\end{equation}
Then there exists a distribution function $G\le F$ that satisfies \eqref{eq:loglog tail condition} as well and is such that $t^{2}\ovl{G}(t)$ is slowly varying at infinity. 
The function $G$ may further be chosen subject to $\int_{0}^{\infty}\ovl{G}(x)\,{\rm d}x\le\mu+\frac{1}{n}$ for arbitrary $n\in\N$, where $\mu=\Erw\xi$.
\end{lemma}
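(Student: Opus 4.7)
The plan is to construct $G$ by leaving $F$ untouched on an initial interval $[0,T)$ and replacing its tail beyond a large threshold $T=T_{n}$ by a regularly varying function of index $-2$. The heart of the argument is to build a slowly varying function $\ell(t)$ that dominates $t^{2}\ovl F(t)$ while satisfying $\ell(t)/\log\log t\to 0$. Once $\ell$ is in hand, I will set
$$ \ovl G(t)\,:=\,\ovl F(t)\ \text{for}\ 0\le t<T,\qquad\ovl G(t)\,:=\,\min\bigl(\ovl F(T-),\,\ell(t)/t^{2}\bigr)\ \text{for}\ t\ge T, $$
which should give a survival function with all the required properties.

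To build $\ell$, I put $\varphi(t):=t^{2}\ovl F(t)/\log\log t$, which tends to $0$ by hypothesis, and replace it by the nonincreasing envelope $\tilde\varphi(t):=\sup_{s\ge t}\varphi(s)$. The task then reduces to the folklore fact that any nonincreasing function tending to $0$ is dominated by some slowly varying function tending to $0$. Concretely, I fix a super-exponentially sparse grid $t_{k}$ with $t_{k}\ge\exp(k^{2})$ and $\tilde\varphi(t_{k})\le 2^{-k}$, and define $\log\epsilon(t)$ to be piecewise linear in $\log t$ with nodal values $\log\epsilon(t_{k})=-(k-1)\log 2$. Since $\log t_{k+1}-\log t_{k}\ge 2k+1\to\infty$, the slopes of $\log\epsilon$ on the $k$th slab are $O(1/k)$, which yields $\epsilon(\lambda t)/\epsilon(t)\to 1$ for every $\lambda>0$; hence $\epsilon$ is slowly varying, and $\epsilon\ge\tilde\varphi$ everywhere by construction. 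Setting $\ell(t):=\epsilon(t)\log\log t$ then produces a slowly varying function (product of two such) dominating $t^{2}\ovl F(t)$ with $\ell(t)/\log\log t=\epsilon(t)\to 0$. By Theorem 1.5.3 in \cite{Bingham+Goldie+Teugels:1989}, I may further assume that $\ell(t)/t^{2}$ is continuous and eventually nonincreasing.

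For each $n$, I then choose $T=T_{n}$ large enough that (i) $\ell(t)/t^{2}\ge\ovl F(t)$ and $\ell(t)/t^{2}$ is nonincreasing on $[T,\infty)$, and (ii) $\int_{T}^{\infty}\ell(t)/t^{2}\,\dd t\le 1/n$. Condition (ii) is feasible since, by Karamata's theorem, $\int_{T}^{\infty}\ell(t)/t^{2}\,\dd t\sim\ell(T)/T=o(\log\log T/T)\to 0$. The function $\ovl G$ defined above is then a valid survival function: it equals $\ovl F$ on $[0,T)$, has at most a jump down at $T$ (since $\ovl G(T)\le\ovl F(T-)$), and is continuous and nonincreasing on $[T,\infty)$ as the minimum of a constant and a continuous nonincreasing function. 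It dominates $\ovl F$ pointwise, and for $t$ large enough one has $\ell(t)/t^{2}<\ovl F(T-)$, whence $\ovl G(t)=\ell(t)/t^{2}$; thus $t^{2}\ovl G(t)=\ell(t)$ is slowly varying and $t^{2}\ovl G(t)/\log\log t\to 0$. Finally,
$$ \int_{0}^{\infty}\ovl G(x)\,\dd x\,\le\,\int_{0}^{\infty}\ovl F(x)\,\dd x+\int_{T}^{\infty}\ell(t)/t^{2}\,\dd t\,\le\,\mu+\frac{1}{n}. $$
The main obstacle is the construction of the slowly varying majorant $\ell$---everything else is careful bookkeeping at the splice---and the key insight there is that log-linear interpolation on a super-exponentially sparse grid automatically produces slow variation, since the slopes of $\log\epsilon$ in $\log t$ tend to zero.
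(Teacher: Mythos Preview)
Your proof is correct and shares the paper's overall strategy---build a slowly varying $\ell(t)$ with $t^{2}\ovl F(t)\le\ell(t)=o(\log\log t)$ and take $\ovl G(t)\sim\ell(t)/t^{2}$ for large $t$---but the implementations differ in two places. The paper defines $\ovl G$ directly as the piecewise function $\frac{1}{k}\cdot\frac{\log\log t}{t^{2}}$ on intervals $[a_{k-1},a_{k})$ with $a_{k}\ge ka_{k-1}$, verifying slow variation from this super-geometric spacing, and then handles the mean bound by rescaling $G$ on an initial segment $[0,m)$ by the factor $G(m)/F(m)$ and letting $m\to\infty$. Your route is arguably cleaner on both counts: you isolate the folklore fact that any nonincreasing function tending to $0$ admits a slowly varying majorant tending to $0$, prove it by log-linear interpolation on a sparse grid (which already makes $\epsilon$ continuous and $\ell(t)/t^{2}$ eventually decreasing, so the appeal to BGT Theorem~1.5.3 is in fact unnecessary), and your splice at a single threshold $T$ with $\int_{T}^{\infty}\ell(t)/t^{2}\,\dd t\le 1/n$ handles the mean bound more transparently than the paper's rescaling trick. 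One minor imprecision to fix: from $t_{k}\ge\exp(k^{2})$ alone you cannot conclude $\log t_{k+1}-\log t_{k}\ge 2k+1$; simply choose the $t_{k}$ inductively with the extra requirement $t_{k+1}\ge t_{k}\,e^{2k+1}$ and the rest goes through unchanged.
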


\begin{proof}
Define
$$ a_{0}\,:=\,\inf\{t\ge \eee:t^{-2}\log\log t\text{ is decreasing}\} $$
and recursively
$$ a_{n}\,:=\,\inf\bigg\{t\ge na_{n-1}:\frac{s^{2}\,\ovl{F}(s)}{\log\log s}\le\frac{1}{n}\text{ for all }s\ge t\bigg\} $$
for $n\in\N$. Then
$$ G(t)\ :=\ 1-\1_{[0,a_{0})}(t)-\frac{\log\log t}{t^{2}}\sum_{n\ge 1}\frac{1}{n}\1_{[a_{n-1},a_{n})}(t),\quad t\ge 0 $$
is a distribution function on $[0,\infty)$ that satisfies the tail condition \eqref{eq:loglog tail condition} and is also bounded by $F$. Moreover, it can be verified by using $a_{n}\ge n
a_{n-1}$ that $t^{2}\ovl{G}(t)$ is slowly varying at infinity.
Defining
$$ G_{m}(t)\ =\ \frac{G(m)}{F(m)}\,F(t)\1_{[0,m)}(t)\ +\ G(t)\,\1_{[m,\infty)}(t) $$
for $m\in\N$, one can further readily check that the $G_{m}$ are distribution functions that also have the properties asserted for $G$ and that $\lim_{m\to\infty}\int_{0}^{\infty}\ovl{G}_{m}(x)\,{\rm d}x=\mu$.
\end{proof}

\begin{lemma}\label{lem:BC-type lemma}
Suppose that
\begin{gather}\label{eq:slowly varying tail condition}
t^{2}\,\Prob\{\xi>t\}\text{ is slowly varying at infinity}
\shortintertext{and}
\lim_{t\to\infty}\frac{t^{2}\,\Prob\{\xi>t\}}{\log\log t}\ =\ 0.\label{eq:lower loglog condition}
\end{gather}
Put $l_{n}=n\log n$ for $n\in\N$. Then
\begin{equation*}
\sum_{n\ge 1}\Prob\{M_{b^{l_{n}}}>cb^{l_{n+1}}\}\ <\ \infty
\end{equation*}
for any $c>0$ and integer $b\ge 2$, and
\begin{equation*}
\sum_{n\ge 1}\Prob\bigg\{\max_{b^{l_{n}}<k\le b^{l_{n+1}}}\wh{S}_{k}\le cb^{l_{n+1}}\bigg\}\ =\ \infty
\end{equation*}
for any $c>\mu$ and integer $b\ge 2$, where $\mu=\Erw\xi<\infty$.
\end{lemma}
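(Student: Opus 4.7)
Both assertions reduce to controlling $\Prob\{S_k>T\}$ in the regime $T\gg k\mu$, where the one-big-jump heuristic $\Prob\{S_k>T\}\approx k\,\Prob\{\xi>T\}$ applies. Since \eqref{eq:slowly varying tail condition} places the law of $\xi$ in the domain of attraction of a (possibly non-normally scaled) normal distribution, a Nagaev-type large deviation bound of the same flavour as \eqref{eq:Cline+Hsing LDP} is available: for any $\eps,\delta>0$,
\begin{equation*}
\Prob\{S_k-k\mu>t\}\ \le\ (1+\eps)\,k\,\Prob\{\xi>t\}
\end{equation*}
whenever $k$ is large enough and $t\ge\delta k$. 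Throughout both parts the relevant scales are $k\asymp b^{l_n}$ or $b^{l_{n+1}}$ and $t\asymp b^{l_{n+1}}$, so this bound is applicable once $n$ is large. The routine computation $l_{n+1}-l_n=\log n+O(1)$ will drive the quantitative estimates.

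For part 1, set $T_n:=cb^{l_{n+1}}$. Independence of $\wh S_1,\wh S_2,\ldots$ and Boole's inequality give
\begin{equation*}
\Prob\{M_{b^{l_n}}>T_n\}\ \le\ \sum_{k=1}^{b^{l_n}}\Prob\{S_k>T_n\}.
\end{equation*}
Since $k\mu\le b^{l_n}\mu\ll T_n$ for large $n$, we have $T_n-k\mu\ge T_n/2$, and the deviation bound yields $\Prob\{S_k>T_n\}\le 2k\,\Prob\{\xi>T_n/2\}$ for $k\ge k_0$ (the finitely many small-$k$ contributions being controlled via $\Prob\{S_k>T_n\}\le k\Prob\{\xi>T_n/k\}$). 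Summing, using $\sum_{k\le b^{l_n}}k\sim b^{2l_n}/2$ and $\Prob\{\xi>T_n/2\}=O(\ell(T_n)/T_n^2)$ with $\ell(t):=t^2\Prob\{\xi>t\}$, we obtain
\begin{equation*}
\Prob\{M_{b^{l_n}}>T_n\}\ =\ O\bigl(b^{-2(l_{n+1}-l_n)}\,\ell(b^{l_{n+1}})\bigr)\ =\ O\bigl(n^{-2\log b}\,\log n\bigr),
\end{equation*}
where we used \eqref{eq:lower loglog condition} to conclude $\ell(b^{l_{n+1}})=o(\log\log b^{l_{n+1}})=o(\log n)$. This is summable because $2\log b\ge 2\log 2>1$ for every integer $b\ge 2$.

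For part 2, retain $T_n=cb^{l_{n+1}}$. Independence of the $\wh S_k$ and the elementary inequality $-\log(1-x)\le 2x$ for $x\le 1/2$ (applicable for $n$ large, by part 1) yield
\begin{equation*}
-\log\Prob\Big\{\max_{b^{l_n}<k\le b^{l_{n+1}}}\wh S_k\le T_n\Big\}\ \le\ 2\sum_{k=b^{l_n}+1}^{b^{l_{n+1}}}\Prob\{S_k>T_n\}.
\end{equation*}
For each $k$ in the range, $T_n-k\mu\ge(c-\mu)b^{l_{n+1}}\ge(c-\mu)k$, so the deviation bound applies with $\delta:=c-\mu>0$ and gives $\Prob\{S_k>T_n\}\le 2k\,\Prob\{\xi>(c-\mu)b^{l_{n+1}}\}$. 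Summing,
\begin{equation*}
-\log\Prob\{\max\le T_n\}\ \le\ C\,b^{2l_{n+1}}\,\Prob\{\xi>(c-\mu)b^{l_{n+1}}\}\ =\ O\bigl(\ell(b^{l_{n+1}})\bigr)\ =\ o(\log n).
\end{equation*}
Hence each term of the asserted series is at least $\exp(-o(\log n))=n^{-o(1)}$, and $\sum_n n^{-o(1)}=\infty$.

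The main difficulty lies in the tight interplay between three quantities: the block length ($\sim b^{l_{n+1}}$), the threshold ($cb^{l_{n+1}}$), and the tail rate $\ell(t)=o(\log\log t)$. Part 1 requires exactly $2\log b>1$, which is why the constraint that $b$ be an integer $\ge 2$ is essential, whereas part 2 would fail on the borderline $\ell(t)\asymp\log\log t$ since the $o(\log n)$ bound would degrade to $O(\log n)$ and the series could then converge.
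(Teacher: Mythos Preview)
Your argument is correct and follows essentially the same route as the paper's: both parts hinge on the Nagaev-type large deviation bound \eqref{eq:Cline+Hsing LDP} (available here because \eqref{eq:slowly varying tail condition} places $\xi$ in the $\alpha=2$ regime), the asymptotic $l_{n+1}-l_n\sim\log n$, and the key input $\ell(b^{l_{n+1}})=o(\log n)$ from \eqref{eq:lower loglog condition}; the paper merely streamlines your first part by bounding $\sum_{k\le b^{l_n}}\Prob\{S_k>T_n\}\le b^{l_n}\Prob\{S_{b^{l_n}}>T_n\}$ via monotonicity in $k$, which avoids your small/large-$k$ split. One cosmetic slip: the parenthetical ``by part~1'' justifying the use of $-\log(1-x)\le 2x$ is misplaced (part~1 only concerns indices $k\le b^{l_n}$), but the required smallness of $\Prob\{S_k>T_n\}$ for $b^{l_n}<k\le b^{l_{n+1}}$ follows directly from the Nagaev estimate you invoke two lines later.
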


\begin{proof}
Fixing an arbitrary $c>0$ and $\eps\in (0,1)$, we have for all sufficiently large $n$
\begin{align*}
\Prob\{M_{b^{l_{n}}}>2cb^{l_{n+1}}\}\ &\le\ \sum_{1\le k\le b^{l_{n}}}\Prob\{S_{k}>2cb^{l_{n+1}}\}\ \le\ b^{l_{n}}\,\Prob\{S_{b^{l_{n}}}>2cb^{l_{n+1}}\}\\
&\le\ b^{l_{n}}\,\Prob\{S_{b^{l_{n}}}-\mu b^{l_{n}}>cb^{l_{n+1}}\}\ \le\ (1+\eps)b^{2l_{n}}\,\Prob\{\xi>cb^{l_{n+1}}\}\\
&\le\ c^{-1}(1+\eps)^{2}\,b^{2(l_{n}-l_{n+1})}\log\log cb^{l_{n+1}},
\end{align*}
where \eqref{eq:Cline+Hsing LDP} has been utilized for the fourth inequality. The first assertion now follows because
$$ 
b^{2(l_{n}-l_{n+1})}\log\log cb^{l_{n+1}}\ \sim\ b^{-2}n^{-2\log b}\log n\quad\text{as }n\to\infty $$
for any $c>0$ and integer $b\ge 2$. For the second assertion, we fix an arbitrary $c>\mu$ and put $c'=c-\mu$. Then we obtain
\begin{align*}
\Prob\bigg\{\max_{b^{l_{n}}<k\le b^{l_{n+1}}
}\wh{S}_{k}\le cb^{l_{n+1}}\bigg\}\ &=\ \exp\Bigg(\sum_{b^{l_{n}}<k\le b^{l_{n+1}}}\log\big(1-\Prob\big\{\wh{S}_{k}>cb^{l_{n+1}}\big\}\big)\Bigg)\\
&\ge\ \exp\Big(-(1+\eps)(b^{l_{n+1}}-b^{l_{n}})\,\Prob\big\{\wh{S}_{b^{l_{n+1}}}>cb^{l_{n+1}}\big\}\Big)\\
&=\ \exp\Big(-(1+\eps)(b^{l_{n+1}}-b^{l_{n}})\,\Prob\big\{\wh{S}_{b^{l_{n+1}}}-\mu b^{l_{n+1}}>c'b^{l_{n+1}}\big\}\Big)\\
&\ge\ \exp\Big(-(1+\eps)^{2}(b^{l_{n+1}}-b^{l_{n}})b^{l_{n+1}}\,\Prob\big\{\xi>c'b^{l_{n+1}}\big\}\Big)\\
&\ge\ \exp\Big(-(1+\eps)^{2}b^{2l_{n+1}}\,\Prob\big\{\xi>c'b^{l_{n+1}}\big\}\Big)\\
&\ge\ \exp\Big(-(1+\eps)^{2}\eps_{n}\,\log\log c'b^{l_{n+1}}\Big)
\end{align*}
for all sufficiently large $n$ and suitable $\eps_{n}\to 0$. Hence, using $\log\log c'b^{l_{n+1}}\sim\log n$, we see that
$$ \exp\Big(-(1+\eps)^{2}\eps_{n}\,\log\log c'b^{l_{n+1}}\Big)\ \asymp\ n^{-\eps _{n}(1+\eps)^{2}}, $$
which gives the desired result. Here, $a_{n}\asymp b_{n}$ means that $a_{n}/b_{n}$ is bounded and bounded away from $0$.
\end{proof}

\begin{proof}[Proof of Theorem \ref{thm:SLLN maxima}]
In view of the duality relation \eqref{eq:duality max and passage time}, it suffices to prove the assertions for $(M_{n})_{n\ge 1}$ as already mentioned.

\vspace{.2cm}
(a) For all $\eps\in (0,\mu)$,
$$ \{|M_{n}-\mu n|>\eps n~\text{i.o.}\}\ \subseteq\ \{|\wh{S}_{n}-\mu n|>\eps n~\text{i.o.}\}, $$ where ``\hspace{1pt}i.o.'' is the usual abbreviation for ``\hspace{1pt}infinitely often''. Since $\Erw\xi^{2}<\infty$, Prop.~\ref{prop:one-sided Hsu-Robbins} implies
$$ \sum_{n\ge 1}\Prob\{|\wh{S}_{n}-\mu n|>\eps n\}\ =\ \sum_{n\ge 1}\Prob\{|S_{n}-\mu n|>\eps n\}\ <\ \infty $$
and thus $\Prob\{|M_{n}-\mu n|>\eps n~\text{i.o.}\}=\Prob\{|\wh{S}_{n}-\mu n|>\eps n~\text{i.o.}\}=0$. This proves the first limit relation in \eqref{eq:slln}.

\vspace{.2cm}
(b) Assume next that $\Erw\xi^{2}=\infty$, thus $\Erw((\xi-\mu)^{-})^{2}<\infty=\Erw((\xi-\mu)^{+})^{2}$, for $\xi$ is nonnegative. Then, by another appeal to Prop.~\ref{prop:one-sided Hsu-Robbins},\begin{gather*}
\sum_{n\ge 1}\Prob\{\wh{S}_{n}>(\mu+\eps)n\}\ =\ \sum_{n\ge 1}\Prob\{S_{n}>(\mu+\eps)n\}\ =\ \infty\quad\text{for all }\eps>0,
\shortintertext{whereas}
\sum_{n\ge 1}\Prob\{\wh{S}_{n}<(\mu-\eps)n\}\ =\ \sum_{n\ge 1}\Prob\{S_{n}<(\mu-\eps)n\}\ <\ \infty\quad\text{for all }\eps>0.
\end{gather*}
Consequently, $\limsup_{n\to\infty} n^{-1}M_{n}=\limsup_{n\to\infty} n^{-1}\wh{S}_{n}=\infty$ a.s.~by the converse part of the Borel-Cantelli lemma and
\begin{equation}\label{eq:later}
\liminf_{n\to\infty} n^{-1}M_{n}\ \ge\ \liminf_{n\to\infty}n^{-1}\wh{S}_{n}\ \ge\ \mu\quad\text{a.s.}
\end{equation}
by the direct part of the Borel-Cantelli lemma (relation \eqref{eq:later} will be used later).

\vspace{.1cm}
In order to show $\lim_{n\to\infty}n^{-1}M_{n}=\infty$ a.s.~under the additional assumption
\begin{equation}\label{eq:upper loglog condition}
\lim_{t\to\infty}\frac{t^{2}\,\Prob\{\xi>t\}}{\log\log t}\ =\ \infty,
\end{equation}
we first note that
\begin{equation}\label{eq:lower tail equivalence}
\liminf_{n\to\infty}\frac{\Prob\{S_{n}>cn\}}{n\,\Prob\{\xi>cn\}}\ \ge\ 1
\end{equation}
for any $c>0$. The latter follows from
$$ \Prob\{S_{n}>cn\}\ \ge\ \Prob\{\max_{1\le k\le n}\,\xi_k>cn\}\ =\ 1-F(cn)^{n}\ \sim\ n\,\Prob\{\xi>cn\}\quad \text{as }n\to\infty, $$
where the limit relation is a consequence of  $\lim_{n\to\infty}n\,\Prob\{\xi>cn\}=0$ 
(as $\mu=\Erw\xi<\infty$).


\vspace{.1cm}
Fixing any $\eps\in (0,1)$ and $c>0$, \eqref{eq:lower tail equivalence} provides us with
$$ \Prob\{S_{n}>cn\}\ \ge\ (1-\eps)n\,\Prob\{\xi>cn\} $$
for all sufficiently large $n$. Consequently, putting $\ovl{M}_{n}(b):=\max_{b^{n-1}\le
k<b^{n}}k^{-1}\wh{S}_{k}$,
\begin{align*}
\Prob\big\{\ovl{M}_{n}(b)\le c\big\}\
&=\ \exp\Bigg(\sum_{b^{n-1}\le k<b^{n}}\log\big(1-\Prob\{S_{k}>ck\}\big)\Bigg)\\
&\le\ \exp\Bigg(-\sum_{b^{n-1}\le k<b^{n}}\Prob\{S_{k}>ck\}\Bigg)\\
&\le\ \exp\Bigg(-(1-\eps)\sum_{b^{n-1}\le k<b^{n}}k\,\Prob\{\xi>ck\}\Bigg)\\
&\le\ \exp\big(-(1-\eps)(b-1)b^{2n-2}\,\Prob\{\xi>cb^{n}\}\big).
\end{align*}
for any integer $b>1$ and sufficiently large $n$. Now use \eqref{eq:upper loglog condition}, giving $\lim_{n\to\infty}b^{2n}\,\Prob\{\xi>cb^{n}\}/\log n=\infty$, to infer
\begin{align*}
\sum_{n\ge 1}\Prob\big\{\ovl{M}_{n}(b)\le c\big\}\ <\ \infty
\end{align*}
and thus $\Prob\big\{\ovl{M}_{n}(b)\le c\text{ i.o.}\big\}=0$ for any integer $b>1$ and $c>0$ by another appeal to the Borel-Cantelli lemma. We arrive at the desired conclusion (first half of \eqref{eq:slln3}) because
$$ \lim_{n\to\infty}\frac{M_{n}}{n}\ =\ \lim_{n\to\infty}\ovl{M}_{n}(b)\ =\ \infty\quad\text{a.s.} $$

In view of \eqref{eq:later} it remains to show $\liminf_{n\to\infty}n^{-1}M_{n}\le\mu$ a.s.~if $\Erw\xi^{2}=\infty$ and
\eqref{eq:lower loglog condition} holds. W.l.o.g.~we make the additional assumption that the law of $\xi$ also satisfies \eqref{eq:slowly varying tail condition}. Otherwise, Lemma \ref{lem:slowly varying tails} provides the existence of a coupling $(\xi_{n,k},\xi_{n,k}')_{n,k\ge 1}$ of i.i.d.~random pairs with generic copy $(\xi,\xi')$ such that $\xi'\ge\xi$ a.s., $\Erw\xi'\in (\mu,\mu+\eps)$ for arbitrarily fixed $\eps>0$, and $t^{2}\,\Prob\{\xi'>t\}$ satisfies both \eqref{eq:slowly varying tail condition} and \eqref{eq:lower loglog condition}. Putting $\wh{S}_{n}=\sum_{k=1}^{n}\xi_{n,k},\,\wh{S}_{n}'=\sum_{k=1}^{n}\xi_{n,k}'$ and $M_{n}'=\max_{1\le k\le n}\wh{S}_{k}'$, we then obviously have $M_{n}\le M_{n}'$ a.s. Hence, by proving the assertion for the $M_{n}'$, i.e.,~$\liminf_{n\to\infty}n^{-1}M_{n}'\le\Erw\xi'\le\mu+\eps$ a.s., we also get the result for $M_{n}$.

\vspace{.1cm}
If the law of $\xi$ satisfies \eqref{eq:slowly varying tail condition} and \eqref{eq:lower loglog condition}, we can invoke Lemma \ref{lem:BC-type lemma} and the Borel-Cantelli lemma to infer
\begin{gather*}
\Prob\left\{M_{b^{l_{n}}}>cb^{l_{n+1}}\text{ i.o.}\right\}\ =\ 0
\shortintertext{and}
\Prob\bigg\{\max_{b^{l_{n}}<k\le b^{l_{n+1}}}\wh{S}_{k}\le cb^{l_{n+1}}\text{ i.o.}\bigg\}\ =\ 1
\end{gather*}
for any $c>\mu$. When combined, this yields
$$ \Prob\left\{M_{n}\le cn\text{ i.o.}\right\}\ =\ \Prob\left\{M_{b^{l_{n}}}\le cb^{l_{n}}\text{ i.o.}\right\}\ =\ 1 $$
for any $c>\mu$ and thus the desired result.

\vspace{.2cm}
(c) If $\Erw\xi=\infty$, a simple truncation argument provides $\lim_{n\to\infty} n^{-1}M_{n}=\infty$ a.s. Namely, let again $\wh{S}_{n}=\sum_{k=1}^{n}\xi_{n,k}$ for $n\in\N$ and consider the decoupled random walk $(\wh{S}_{n}(b))_{n\ge 1}$ with increments $\xi_{n,k}\wedge b$ for $b>0$ and associated maxima $M_{n}(b)=\max_{1\le k\le n}\wh{S}_{k}(b)$. Plainly, $M_{n}\ge M_{n}(b)$ a.s.~for all $n\in\N$ and $b>0$, and since $\Erw(\xi\wedge b)^{2}<\infty$, we infer with the help of part (a)
$$ \liminf_{n\to\infty}\frac{M_{n}}{n}\ \ge\ \lim_{n\to\infty}\frac{M_{n}(b)}{n}\ =\ \Erw(\xi\wedge b)\quad\text{a.s.} $$
for any $b>0$ and thereupon the assertion because $\lim_{b\to\infty}\Erw(\xi\wedge b)=\infty$.

\vspace{.2cm}
(d) Let $\tau(t)=\inf\{n\ge 1:S_{n}>t\}$ be the level-$t$ first passage time for 
$(S_{n})_{n\ge 1}$. It is well-known from standard renewal theory (see, for instance, the proof of Theorem 2.5.1 on p.\,58 in \cite{Gut:2009}) that the family $\{t^{-1}\tau(t):t\ge t_{0}\}$ is uniformly integrable for any $t_{0}>0$. Furthermore,
\begin{align*}
\Prob\{\wh{\tau}(t)>n\}\ =\ \prod_{k=1}^{n}\Prob\{S_{k}\le t\}\ \le\ \Prob\{S_{n}\le t\}\ =\ \Prob\{\tau(t)>n\}
\end{align*}
for all $n\in\N$ and $t\ge 0$. This shows that the distribution tails of $\wh{\tau}(t)$ are dominated by the distribution tails of $\tau(t)$ for each $t$, and this implies the uniform integrability of the family
$\{t^{-1}\wh\tau(t):t\ge t_{0}\}$.
\end{proof}

\noindent \textbf{Funding.} Gerold Alsmeyer and Zakhar Kabluchko acknowledge support by the German Research Foundation (DFG) under Germany’s Excellence Strategy EXC 2044 – 390685587, Mathematics M\"unster: Dynamics--Geometry--Structure. 

\noindent \textbf{Acknowledgment.} A part of this work was done while Alexander Iksanov was visiting M\"{u}nster in March-April 2023 as a M\"{u}nster research fellow. Grateful acknowledgment is made for financial support and hospitality.


\begin{thebibliography}{30}

\bibitem{Baum+Katz:1965} L.E. Baum and M. Katz. \textit{Convergence rates in the law of large numbers}. Trans. Amer. Math. Soc. \textbf{120} (1965), 108--123.

\bibitem{Billingsley:1999} P. Billingsley. \textit{Convergence of probability measures}. 2nd edition, Wiley, 1999.

\bibitem{Borovkov+Mogulskii:2006} A.~A. Borovkov and A.~A. Mogul'skii. \textit{Integro-local and integral theorems for sums of random variables with semiexponential distributions}. Siberian Math. J. \textbf{47} (2006), 990--1026.

\bibitem{Cline+Hsing:2023} D.B.H. Cline and T. Hsing. \textit{Large deviation probabilities for sums of random variables with heavy or subexponential tails}. Preprint (2022) available at {\tt https://arxiv.org/pdf/ 2211.16340.pdf}

\bibitem{Hough+Krishnapur+Peres+Virag:2009} J. Ben Hough, M. Krishnapur, Y. Peres and B. Vir\`{a}g. \textit{Zeros of Gaussian analytic functions and determinantal point processes}. American Mathematical Society, Vol. \textbf{51}, 2009.

\bibitem{Bingham+Goldie+Teugels:1989} N.H. Bingham, C.M. Goldie and J.L. Teugels. \textit{Regular variation}. Cambridge University Press, 1989.

\bibitem{Durrett:2010} R. Durrett. \textit{Probability: theory and examples}. 4th edition, Cambridge University Press, 2010.

\bibitem{Feller:1949} W. Feller, \textit{Fluctuation theory of recurrent events}. Trans. Amer. Math. Soc. \textbf{67} (1949), 98--119.

\bibitem{Fenzl+Lambert:2021} M. Fenzl and G. Lambert. \textit{Precise deviations for disk counting statistics of invariant determinantal processes}. Int. Math. Res. Not. \textbf{2022} (2022), 7420-–7494.

\bibitem{Gut:2009} A. Gut. \textit{Stopped random walks: {L}imit theorems and applications}, 2nd edition. Springer, 2009.

\bibitem{Gut:2013} A. Gut. \textit{Probability: {A} graduate course}, 2nd edition. Springer, 2013.

\bibitem{Hollander:2000} F. den Hollander, \textit{Large deviations}. American Mathematical Society, 2000.

\bibitem{Iksanov+Kabluchko+Kotelnikova:2022} A. Iksanov, Z. Kabluchko and V. Kotelnikova. \textit{A functional limit theorem for nested Karlin’s occupancy scheme generated by discrete Weibull-like distributions}. J. Math. Anal. Appl. \textbf{507} (2022), 125798.

\bibitem{Iksanov+Marynych+Meiners:2016} A. Iksanov, A. Marynych and M. Meiners. \textit{Moment convergence of first-passage times in renewal theory}. Statist. Probab. Letters. \textbf{119} (2016), 134--143.

\bibitem{Kostlan:1992} E. Kostlan, \textit{On the spectra of Gaussian matrices}. Lin. Algebra Appl. \textbf{162–164} (1992), 385--388.

\bibitem{Nagaev:1982} S.V. Nagaev, \textit{On the asymptotic behavior of one-sided large deviation probabilities}. Theor. Prob. Appl. \textbf{26} (1982), 362--366.

\bibitem{Whitt:2002} W. Whitt, \textit{Stochastic-process limits. An introduction to stochastic-process limits and their application to queues}. Springer, 2002.

\bibitem{Yamazato:1978} M. Yamazato, \textit{Unimodality of infinitely divisible distribution functions of class $L$}. Ann. Probab. \textbf{6} (1978), 523--531.

\end{thebibliography}
\end{document}